\newtheorem{thm}{Theorem}
\newtheorem{cor}{Corollary}
\newtheorem{lem}{Lemma}
\newtheorem{pro}{Proposition}
\newtheorem{rem}{Remark}
\newtheorem{definition}{Definition}
\title[Nondegeneracy for strongly competitive Gross-Pitaevskii systems]{Quantitative linear nondegeneracy of  approximate solutions to strongly competitive Gross-Pitaevskii systems in general domains in $N\geq 1$ dimensions}
\author{Christos Sourdis}
\address{General Lyceum of Moires, Heraklion, Crete, Greece.}
\email{sourdis@uoc.gr}
\begin{document}

\maketitle
\begin{abstract}
We consider strongly coupled competitive elliptic systems of Gross-Pitaevskii type that arise in the study of two-component Bose-Einstein condensates, in general smooth bounded domains of $\mathbb{R}^N$, $N\geq 1$. As the coupling parameter tends to infinity, solutions that remain uniformly bounded are known to converge to a segregated limiting profile, with the difference of its components satisfying a limit scalar PDE. Under natural non-degeneracy assumptions on a solution of the limit problem,  we show that the linearization of the Gross-Pitaevskii system around a 'sufficiently good' approximate solution does not have a kernel and obtain an estimate for its inverse with respect to carefully chosen weighted norms. Our motivation is the study of the persistence of solutions of the limit scalar problem for large values of the coupling parameter which is known only in two dimensions or if the domain has radial symmetry. \end{abstract}

\section{Introduction}
\subsection{The problem}
We are concerned with positive solutions to the
elliptic problem:
\begin{equation}\label{eqEq}
\left\{\begin{array}{l}
  - \Delta u_1 = f(u_1, x) - \beta u_1u_2
^2
\ \textrm{in}\ \Omega, \\
  - \Delta u_2 = f(u_2, x) - \beta u_2u_1^2
\ \textrm{in}\ \Omega, \\
  u_1 = u_2 = 0\ \textrm{on}\ \partial \Omega,
\end{array}\right.
\end{equation}
for sufficiently large values of the parameter $\beta>0$. Here $\Omega$ is a bounded, open domain in $\mathbb{R}^N$, $N\geq 2$, with smooth boundary,
$f :
 \mathbb{R} \times \Omega  \to \mathbb{R}$ is sufficiently smooth and odd in the first variable, i.e.,
\begin{equation}\label{eqfodd} f(u, x) \equiv -f(-u, x).\end{equation}
We also assume that $f$ \emph{separates phases} in the following sense:
\begin{definition}\label{defSep} We say that the function $f \in C^1(\mathbb{R} \times \Omega)$ separates phases in $\Omega$ if the problem \begin{equation}\label{eqw}
 - \Delta w = f(w, x)\ \textrm{in}\  \Omega,\  w = 0\ \textrm{on}\ \partial \Omega, \end{equation}
has a solution $w$ such that $\Gamma = \left\{x\ | \ w(x) = 0\right\} \subset \Omega$ is    an $(N - 1)$-dimensional compact, smooth and embedded manifold    dividing $\Omega$ into
two disjoint, open components $\Omega_i$, $i = 1, 2$, with $\partial \Omega_i \cap \Omega = \Gamma$, $\Omega = \Gamma \cup \Omega_1\cup \Omega_2$ and
\begin{equation}\label{eqwHopf} \partial_\nu w(x) := \mu(x) > 0\ \textrm{on}\ \Gamma,\end{equation}
where $\nu$ denotes the choice of the unit normal to $\Gamma$ that is exterior to a fixed component of $\Omega$.
\end{definition}
To motivate the above definition, let us suppose that $\nu$ is the exterior to $\Omega_2$ so that $w > 0$ in $\Omega_1$ and
$w < 0$ in $\Omega_2$. Now, recalling that $f$ is odd in the first variable, we have
\[-\Delta w = f(w, x)\ \textrm{in}\ \Omega_1\ \textrm{and}\ - \Delta(-w) = f(-w, x)\ \textrm{in}\ \Omega_2.\]
Hence, the vector function
\begin{equation}\label{eqw0} \textbf{w}^0 = (w_1, w_2),\ \textrm{with}\ w_1(x) = w(x)\mathds{1}_{\Omega_1}(x)\ \textrm{and}\ w_2(x) = -w(x)\mathds{1}_{\Omega_2}(x),\end{equation}
would be a smooth solution to (\ref{eqEq}) if not for the jump of its derivative across $\Gamma$.
In the language of singular perturbation theory, keep in mind that $\beta\gg 1$,  $\textbf{w}^0$ is referred to as the \emph{outer solution} to (\ref{eqEq}).
Nevertheless, based on the presence of a suitable \emph{inner solution} which we shall describe shortly and for $f$ and $\Gamma$ sufficiently smooth, the persistence of $\textbf{w}^0$ in the uniform topology, for large $\beta$, has recently been established if $N=2$ in \cite{kowalczyk} (see also \cite{casteras} for the radial case with $N\geq 1$) under the following  non-degeneracy assumption:

\begin{definition}\label{defNonDeg}
We say that the phase separating solution $\textbf{w}^0$ to the problem (\ref{eqw}) is non-degenerate
if:
\begin{description}
  \item[(a)] Each of the following linear problems
  \begin{equation}\label{ewlinearSide}
                                        -\Delta \psi = f_u(w_i,x)\mathds{1}_{\Omega_i}\psi\  \textrm{in}\ \Omega_i; \
                                        \psi = 0\ \textrm{on}\ \partial \Omega_i,\ i = 1, 2,
   \end{equation} has only the trivial solution.
  \item[(b)]The linear problem \begin{equation}\label{ewlinearFul}
                                        -\Delta \psi = \left[f_u(w_1,x)\mathds{1}_{\Omega_1}+f_u(w_2,x)\mathds{1}_{\Omega_2}\right]\psi\  \textrm{in}\ \Omega; \
                                        \psi = 0\ \textrm{on}\ \partial \Omega,
   \end{equation} has only the trivial solution.
\end{description}
\end{definition}
It was conjectured in \cite{kowalczyk} that this persistence result can be generalized for any $N\geq 3$, as  in the radially symmetric case that was treated in \cite{casteras}.
Motivated in part by this, we are going to extend here the linear a-priori estimate \cite[Prop. 4.1]{casteras} to the above nonradial setting.
The aforementioned result concerns the linearization of (\ref{eqEq}) on a sufficiently good approximate solution (a $\beta$-dependent correction of $\textbf{w}^0$ near $\Gamma$, roughly speaking)
and its invertibility properties between carefully chosen weighted spaces.

\subsection{Ansatz for the approximate solution}\label{subsecAnsatz}Here, mainly motivated by \cite{casteras,kowalczyk}, we shall introduce our ansatz for an approximate solution of (\ref{eqEq}) in the  $\beta\gg 1$ regime. We stress that in the current work we will just be concerned with the study of the linearized problem on this ansatz; we postulate that   such an approximate solution can be derived through a matched asymptotic analysis, making use of both nondegeneracy conditions in Definition \ref{defNonDeg} (see also \cite{sourdisGelfand} for a problem that shares some similar features).

Throughout the paper, our approximate solution $\textbf{U}=(U_1,U_2)$ will be at least twice differentiable   in $\Omega$ with  nonnegative components.
Furthermore, for
notational reasons, it is convenient to introduce a small parameter
\begin{equation}\label{eqepsilonDef}
\epsilon=\frac{1}{\sqrt[4]{\beta}}.
\end{equation}

\subsubsection{Local coordinates near $\Gamma$}\label{subsectfermi}
In order to define our approximate solution near $\Gamma$, we need to use the natural   coordinates associated to it. Let $r=r(x)$ be the signed distance satisfying $r<0$
inside $\Gamma$ and $r>0$ outside $\Gamma$. Let $s = s(x)$ be the projection of $x$ on $\Gamma$ along the normal of $\Gamma$. Then,
there exists a $d_0 > 0$ such that $\Gamma(2d_0)= \left\{x \in \mathbb{R}^N\ :\  |r(x)| < 2d_0\right\} \subset \Omega$ and $\tau :
\Gamma(2d_0)\to (-2d_0,2d_0) \times \Gamma$ defined by $\tau(x) = (r(x), s(x))$ is a diffeomorphism.

\subsubsection{The inner limit ODE system}\label{subsectBerest}
In \cite{berestArma}, it was   shown that there exists a unique solution to the system
\begin{equation}\label{eqBerest}
 \left\{\begin{array}{c}
          -V_1''+V_1V_2^2=0\ \textrm{in}\ \mathbb{R}, \\
          -V_2''+V_2V_1^2=0\ \textrm{in}\ \mathbb{R},
        \end{array}
  \right.
\end{equation}such that $V_1$, $V_2>0$ in
$\mathbb{R}$ and $V_1(0) = V_2(0) = 1$, $V_1(t) = V_2(-t)$, $V '_1(t) > 0$,
\begin{equation}\label{eqBerestAsympt}
\begin{array}{l}
  V_1(t)=At+B+\mathcal{O}(e^{-ct^2})\ \textrm{as}\ t\to +\infty, \\
  V_1(t)=\mathcal{O}(e^{-ct^2})\ \textrm{as}\ t\to -\infty,
\end{array}
\end{equation}
for some $A>0$, $B\in \mathbb{R}$ and $c>0$. In fact, it is not hard to show that $B>0$ holds (see \cite[Sec. 2]{aftalionSour}).
Moreover, the above estimates in (\ref{eqBerestAsympt}) can be differentiated in the obvious way. Note also that (\ref{eqBerest}) is invariant under translation $\tau \to \textbf{V} (\cdot -\tau)$ and scaling $\lambda \to \lambda \textbf{V} (\lambda \ \cdot)$.

\subsubsection{The approximate solution in the inner region $  \Gamma(2|\ln \epsilon|\epsilon)$}\label{subsubΙΝAns}
In $ \Gamma(2|\ln \epsilon|\epsilon)$, we assume that our approximate solution $\textbf{U}=(U_1,U_2)$ satisfies
\begin{equation}\label{eqinner}
U_i(x)=\epsilon b(s) V_i\left(\epsilon^{-1}b(s)\left(r-\epsilon \zeta(s) \right) \right)+\epsilon^2 H(0,s)  W_i\left(\epsilon^{-1}b(s)\left(r-\epsilon \zeta(s) \right) \right)+Q_i(x),
\end{equation}
$i=1,2$, where $\textbf{W}=(W_1,W_2)$ is a solution to $\mathcal{M}(\textbf{W})=\textbf{V}'$ in $\mathbb{R}$, $\mathcal{M}$ as in (\ref{eqM}) below,   such that
\begin{equation}\label{eqWdiaz2}
\textbf{W}(z)=-\frac{1}{2}z^2\textbf{V}'(z)+\mathcal{O}(e^{-cz^2}), \ W_1(z) = -W_2(-z),
\end{equation}
for some constant $c>0$ (see \cite[Lem. 2.3]{kowalczyk}), and $H$  as in (\ref{eqMeanCurv}) below;

\begin{equation}\label{eqb}
  b(s)=\sqrt{\frac{\mu(s)}{A}}+\epsilon \tilde{b}(s),\ |\tilde{b}|+|\nabla_\Gamma \tilde{b}|+\left|\partial_\sigma\left(\nabla_\Gamma \tilde{b}\right)\right|\leq C,
\end{equation}
\begin{equation}\label{eqzeta}
   |\zeta|+|\nabla_\Gamma \zeta|+\left|\partial_\sigma\left(\nabla_\Gamma \zeta\right)\right|\leq C,
\end{equation}

\begin{equation}\label{eqinnerremalg}
  \left|Q_1(r,s)\right|\leq C(r^3+\epsilon^3),
\end{equation}

\begin{equation}\label{eqinnerremalgrr}
  \left|\partial_rQ_1(r,s)\right|\leq C(r^2+\epsilon^2),\ \ \left|\partial^2_{rr}Q_1(r,s)\right|\leq C(r+\epsilon),\ \ \left|\partial_r\nabla_\Gamma Q_1\right|\leq C(r^2+\epsilon^2),
\end{equation}

\begin{equation}\label{eqinnerremalgss}
  \left|\nabla_\Gamma Q_1\right| +\left|\partial_\sigma\left(\nabla_\Gamma Q_1\right)\right|\leq C(r^3+\epsilon^3),
\end{equation}

 \begin{equation}\label{eqQ2}
 |Q_2|+\epsilon |\partial_rQ_2|+\epsilon^2|\partial_{rr}^2Q_2|+|\nabla_\Gamma Q _2|+\left|\partial_\sigma\left(\nabla_\Gamma Q_2\right)\right|+\epsilon \left|\partial_r\nabla_\Gamma Q_2\right|\leq C_D\epsilon^2 e^{-Dr/\epsilon},
 \end{equation}
for any $D>0$, if $r\in [0,2|\ln \epsilon|\epsilon)$, $s\in \Gamma$, provided that $\epsilon>0$ is sufficiently small; here and throughout the paper  $C>0$ denotes a generic constant that is independent of small $\epsilon>0$ (the positive constants $D$ and $C_D$ in (\ref{eqQ2}) are also independent of $\epsilon$), $\nabla_\Gamma=\nabla-\nabla r(\nabla r \cdot \nabla)$ is the tangential gradient along $\Gamma$, and $\sigma$ is  any tangential direction to $\Gamma$. For $r<0$, our assumptions are analogous.

\subsubsection{The approximate solution in the outer region $\Omega\setminus \Gamma(|\ln \epsilon|\epsilon)$}\label{subsubOutAns} Let $w_i,\ \Omega_i$, $i=1,2$, be as in Definition \ref{defSep} and (\ref{eqw0}). Without loss of generality, we assume that $\Omega_1$ is the exterior subdomain (defined in the obvious way).
In $\left\{\Omega\setminus \Gamma(|\ln \epsilon|\epsilon)\right\}\cap \Omega_1$, we assume that our approximate solution has the form $\textbf{U}=(U_1,U_2)$ with
\begin{equation}\label{eqoutdef1}
U_1(x)=w_1(x)+\mathcal{O}_{C^2}(\epsilon),\ U_2(x)= \mathcal{O}_{C^2}(\epsilon^\infty),
\end{equation}
where the above remainders and their   derivatives up to the second order are uniformly bounded by $\epsilon$ and any algebraic power of $\epsilon$, respectively. Naturally, we assume that the Dirichlet boundary conditions are satisfied on $\partial \Omega\cap \overline{\Omega_1}$.
In $\left\{\Omega\setminus \Gamma(|\ln \epsilon|\epsilon)\right\}\cap \Omega_2$, we assume   the obvious corresponding properties.

\subsection{The linearized operator on the approximate solution} The linear operator associated to the linearization of (\ref{eqEq}) on the approximate solution $\mathbf{U}=(U_1,U_2)$ is
\begin{equation}\label{eqL}
  \mathcal{L}\left(\begin{array}{c}
               \varphi_1 \\

               \varphi_2
             \end{array}\right)=\left(\begin{array}{c}
              -\Delta \varphi_1 +\epsilon^{-4}U_2^2\varphi_1+2\epsilon^{-4}U_1U_2\varphi_2-f_u(U_1,x)\varphi_1\\

               -\Delta \varphi_2+\epsilon^{-4}U_1^2\varphi_2+2\epsilon^{-4}U_1U_2\varphi_1-f_u(U_2,x)\varphi_2
             \end{array}\right)
\end{equation}
with $\varphi_1,\varphi_2$ smooth and vanishing on $\partial \Omega$.

Our objective is to derive a useful a-priori estimate for the inhomogeneous problem
\begin{equation}\label{eqinhomog}
 \mathcal{L}( {\varphi})= {g}\  \textrm{in}\ \Omega; \  {\varphi}= {0}\ \textrm{on}\ \partial{\Omega},
\end{equation}
with respect to some carefully chosen weighted norms.

Our norm for $\varphi=(\varphi_1,\varphi_2)$ will be
\begin{equation}\label{eqNorm0}\begin{array}{rcl}
                                 \|\varphi\|_0 & = & \|\varphi\|_{L^\infty(\Omega)}+\|\varphi\|_{C^2\left(\Omega\setminus\Gamma(d)\right)}+\|\nabla_\Gamma\varphi\|_{L^\infty\left(\Gamma(d)\right)}+\epsilon\|\varphi_r\|_{L^\infty\left(\Gamma(d)\right)} \\
                                  &  & \\ & & +\sum_\sigma\|\partial_\sigma\left(\nabla_\Gamma \varphi\right)\|_{L^\infty\left(\Gamma(d)\right)}+\epsilon\|\nabla_\Gamma(\varphi_r)\|_{L^\infty\left(\Gamma(d)\right)}+\epsilon^2\|\varphi_{rr}\|_{L^\infty\left(\Gamma(d)\right)},
                               \end{array}
\end{equation}
where $d\in (0,d_0)$ is independent of $\epsilon$ (recall the appropriate definitions from Subsections \ref{subsectfermi} and \ref{subsubΙΝAns}). On the other side, our norm for $g=(g_1,g_2)$ will be
\begin{equation}\label{eqnorm1}
\begin{array}{rcl}
  \|g\|_1 & = & \|g\|_{C^1\left(\Omega \setminus \Gamma(d/2) \right)}\\
 &&\\ && +\left\|w(r)\left(|g_1|+|\partial_rg_1|+|\partial^2_{rr}g_1|+|\partial_r\nabla_\Gamma g_1|+|\nabla_\Gamma g_1|+\sum_{\sigma}|\partial_\sigma\left(\nabla_\Gamma g_1\right)|\right)\right\|_{L^\infty\left( \Gamma(2d) \right)} \\
   &  &  \\
   &  & +\left\|w(-r)\left(|g_2|+|\partial_rg_2|+|\partial^2_{rr}g_2|+|\partial_r\nabla_\Gamma g_2|+|\nabla_\Gamma g_2|+\sum_{\sigma}|\partial_\sigma\left(\nabla_\Gamma g_2\right)|\right)\right\|_{L^\infty\left( \Gamma(2d) \right)}\\
   & & \\
   & &+\epsilon^{-(1+\alpha)}\sum_{i=1}^{2}\|g_i\|_{L^\infty\left(\Omega_i \setminus \Gamma(2d) \right)}+e^{1/\epsilon}\sum_{i\neq j}\|g_i\|_{L^\infty\left(\Omega_j \setminus \Gamma(2d) \right)},
\end{array}
\end{equation}
where
\begin{equation}\label{eqweight}
w(r)=\left\{\begin{array}{ll}
              1+\left(\frac{r}{\epsilon}\right)^{1+\alpha}, & r\in [0,2d], \\
               &  \\
              e^{\frac{2|r|}{\epsilon}}, & r\in [-2d,0],
            \end{array}
 \right.
\end{equation}
for some $\alpha \in (0,1)$ that is independent of $\epsilon$.

\subsection{Main result} Our main result is the following.
\begin{thm}\label{thmMain}
Assume that $f, \Omega, \textbf{U}$ are as above, not necessarily satisfying the nondegeneracy condition (\ref{ewlinearSide}).
There exist positive constants $\epsilon_0$, $C$ such that if $\varphi,\ g \in C^2(\Omega) \cap C(\bar{\Omega})$ and $\epsilon \in (0,\epsilon_0)$ satisfy (\ref{eqinhomog}), then
\begin{equation}\label{eqaprioriThm}
\|\varphi\|_0\leq C\epsilon \|g\|_1.
\end{equation}
\end{thm}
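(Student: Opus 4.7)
The plan is to argue by contradiction: assume there exist sequences $\epsilon_n\downarrow 0$ and solutions $\varphi_n,g_n$ of (\ref{eqinhomog}) with $\|\varphi_n\|_0=1$ but $\epsilon_n\|g_n\|_1\to 0$. The goal is to show that $\varphi_n\to 0$ in every region, which contradicts the normalization. The normalization allows us to pick a point $x_n\in\overline{\Omega}$ where $\|\varphi_n\|_0$ is essentially attained and to split the analysis according to whether $x_n$ remains in the outer region $\Omega\setminus\Gamma(d)$ or enters the inner strip $\Gamma(d)$ as $n\to\infty$. The weighted source norm $\|g\|_1$ is specifically tuned so that $g_n$ becomes negligible in each limit.

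First I would carry out the inner blow-up near a point $s_*\in\Gamma$. Introducing the Berestycki variable $t=\epsilon_n^{-1}b(s)\bigl(r-\epsilon_n\zeta(s)\bigr)$, the weighted terms $\epsilon\|\varphi_r\|$, $\epsilon^2\|\varphi_{rr}\|$, $\epsilon\|\nabla_\Gamma\varphi_r\|$ appearing in (\ref{eqNorm0}) are precisely $C^2$-bounds in $t$, while the $\nabla_\Gamma$ and $\partial_\sigma\nabla_\Gamma$ terms give tangential regularity uniform in $\epsilon$. The coefficients of $\mathcal{L}$ in these coordinates, read off from (\ref{eqinner})--(\ref{eqQ2}), converge locally uniformly to those of the linearization $\mathcal{M}$ of the Berestycki system (\ref{eqBerest}) around $\mathbf{V}$, and the RHS converges to zero because the inner part of $\|g\|_1$ contains the factor $w(r)$ which is bounded below by $1$ for $|r|\leq 2d$. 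A subsequence converges in $C^2_{\mathrm{loc}}(\mathbb{R})$ to a bounded $\Psi=(\Psi_1,\Psi_2)$ with $\mathcal{M}(\Psi)=0$. The known characterization of the bounded kernel of $\mathcal{M}$ (the scaling mode $\mathbf{V}+t\mathbf{V}'$ is excluded by its linear growth at $+\infty$) forces $\Psi=c\,\mathbf{V}'$ for some scalar $c$; by (\ref{eqBerestAsympt}), $\Psi_1(+\infty)=cA$ and $\Psi_2(-\infty)=-cA$.

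Next I would perform the outer analysis. On $\Omega_i\setminus\Gamma(d)$, the norm $\|\varphi\|_0$ is a $C^2$-norm and the ansatz (\ref{eqoutdef1}) gives $U_j=\mathcal{O}(\epsilon^\infty)$ for $j\neq i$, so the coupling terms $\epsilon^{-4}U_j^2$ in $\mathcal{L}$ vanish uniformly. Using the bounds $\epsilon^{-(1+\alpha)}\|g_i\|_{L^\infty}$ and $e^{1/\epsilon}\|g_j\|_{L^\infty}$ built into $\|g\|_1$, elliptic regularity yields subsequential $C^2_{\mathrm{loc}}$-convergence of $\varphi_{i,n}$ on each $\Omega_i$ to $\varphi_i^\infty$ solving $-\Delta\varphi_i^\infty=f_u(w_i,x)\varphi_i^\infty$ in $\Omega_i$ with $\varphi_i^\infty=0$ on $\partial\Omega\cap\overline{\Omega_i}$. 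The matching between inner and outer proceeds in the intermediate region $|\ln\epsilon|\epsilon\ll |r|\ll 1$: for $r>0$ the polynomial weight $(r/\epsilon)^{1+\alpha}$ in (\ref{eqweight}) kills the linearly growing part of the inner limit up to a boundary value, so that $\varphi_1^\infty$ extends continuously to $\Gamma$ with trace $cA$; for $r<0$ the exponential weight forces $\varphi_j^\infty$ on the opposite side to match the exponentially decaying tail of the other Berestycki component. Assembling $\psi:=\varphi_1^\infty\mathds{1}_{\Omega_1}+\varphi_2^\infty\mathds{1}_{\Omega_2}$, Hopf-type considerations combined with $\partial_\nu w=\mu>0$ on $\Gamma$ identify the jumps and yield a single continuous $H^1_0(\Omega)$ weak solution of (\ref{ewlinearFul}). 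Hypothesis (b) of Definition \ref{defNonDeg} gives $\psi\equiv 0$; backtracking, $cA=0$ hence $\Psi\equiv 0$, and both inner and outer parts of $\|\varphi_n\|_0$ tend to zero, contradicting the normalization.

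The step I expect to be genuinely delicate is the intermediate-region matching that turns the pointwise inner limit $\Psi$ into a trace for $\psi$ on $\Gamma$, while tracking the secondary $\epsilon^2 H(0,s)\,\mathbf{W}$ and $\mathbf{Q}$ corrections in (\ref{eqinner}) and verifying that they contribute only error terms consistent with the factor $\epsilon$ on the RHS of (\ref{eqaprioriThm}). A related technical issue is the proper choice of blow-up point and the case analysis that ensures that whichever sub-term of $\|\varphi_n\|_0$ saturates the normalization gets transported to a nontrivial limit in the appropriate regime; in particular the $\epsilon$-weighted derivative terms in (\ref{eqNorm0}) must be shown to decay via local $C^{2,\alpha}$ estimates once uniform convergence to zero has been obtained, closing the contradiction.
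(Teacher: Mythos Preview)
Your contradiction framework and the inner/outer decomposition match the paper's strategy, but two substantive pieces are missing.

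First, showing that the glued limit $\psi$ is a weak solution of (\ref{ewlinearFul}) requires not only that the traces of $\mathds{1}_{\Omega_1}\varphi_\infty$ and $\mathds{1}_{\Omega_2}\varphi_\infty$ agree (with the correct sign: the paper takes $\psi=\mathds{1}_{\Omega_2}\varphi_\infty-\mathds{1}_{\Omega_1}\varphi_\infty$, and the matching is $\mathds{1}_{\Omega_2}\varphi_\infty=-\mathds{1}_{\Omega_1}\varphi_\infty$ on $\Gamma$), but also that the \emph{normal derivatives} match across $\Gamma$; otherwise Green's identity leaves a surface integral and $\psi$ is not a weak solution. Your ``Hopf-type considerations'' do not supply this. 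The paper obtains both reflection laws (Proposition~\ref{proreflectt}) by testing (\ref{eqinhomog}) in $(-\delta_2,\delta_1)$ against carefully built approximate kernel elements $(\Phi_1,\Phi_2)\approx\mathbf{V}'$ and $(\Psi_1,\Psi_2)\approx z\mathbf{V}'+\mathbf{V}$, each corrected at order $\epsilon$ via the $\mathbf{W}$-profile so that the potential $\tilde{\mathcal{M}}$ kills them to order $\epsilon^2$; integration by parts then isolates the boundary values $\partial_r\varphi_\infty(\pm\delta_i,s)$ and $\varphi_\infty(\pm\delta_i,s)$. This testing argument, not a direct inner/outer trace-matching, is the crux. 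Note also that your inner limit is posed in $\mathbb{R}^N$, not $\mathbb{R}$: the equation is $\mathcal{M}(\Psi)-b^{-2}(0)\Delta_{\mathbb{R}^{N-1}}\Psi=0$, and concluding $\Psi=c\,\mathbf{V}'(z)$ requires the Liouville result of \cite{sourdisJDE}, not merely the $1$D kernel computation.

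Second, the terms $\|\nabla_\Gamma\varphi\|_{L^\infty}$ and $\sum_\sigma\|\partial_\sigma(\nabla_\Gamma\varphi)\|_{L^\infty}$ in (\ref{eqNorm0}) carry no $\epsilon$ weight, so they are not controlled by elliptic estimates once $\varphi_n\to 0$ uniformly. The paper handles them separately (Section~\ref{subsecTange}) by differentiating (\ref{eqinhomog}) tangentially once and twice, showing that $\psi=\varphi_s$ and $\rho=\varphi_{s\sigma}$ satisfy equations of the same type with controllable right-hand sides, and then re-running the blow-up/exponential-decay/maximum-principle machinery on them. Your closing remark about ``$\epsilon$-weighted derivative terms'' decaying by local $C^{2,\alpha}$ estimates addresses only $\epsilon\varphi_r$, $\epsilon^2\varphi_{rr}$, $\epsilon\nabla_\Gamma\varphi_r$, not these unweighted tangential terms.
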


\section{Proof of the main result}
\subsection{The Laplacian in local coordinates near $\Gamma$}\label{subsecFermiLaplac}
%Let $\tau(x) = \left(r(x),s(x)\right)$ be as in Subsection \ref{subsectfermi} and $\tau^{-1}$ be the inverse. Let \begin{equation}\label{eqJacob}
 %                                                                                                                   J(r,s)=
%\det \frac{\partial \tau^{-1}(r,s)}{\partial(r,s)}\end{equation} be the Jacobian determinant of the latter mapping. Abusing slightly notation, in the sequel we shall identify functions of the variable
%$x$ and functions of the variable $(r, s)$ in the domain $\Gamma(2d_0)$.
For $r\in (-2d_0,2d_0)$, let $\Gamma_r$ denote the part of the boundary of $\Gamma(r)$ which contains $(r,s)$ with $s\in \Gamma$. In particular, $\Gamma_0$ is $\Gamma$ itself.
The mean curvature of $\Gamma_r$ at
$(r, s)$ is given by
\begin{equation}\label{eqMeanCurv} H(r, s) =\sum_{i=1}^{N-1}
\frac{\kappa_i(s)}{1 - r\kappa_i(s)},
\end{equation}
where $\kappa_1,\cdots, \kappa_{N-1}$ stand for the principal curvatures of $\Gamma$ at $(0, s)$. Let us note that in the local coordinates $(r,s)$ near $\Gamma$ the Euclidean Laplacian is expressed as
\begin{equation}\label{eqLaplaceFermi1}\Delta_{\mathbb{R}^N}=\partial_{rr}-H(r,s)\partial_r+\Delta_{\Gamma_r},\end{equation}where $\Delta_{\Gamma_r}$ denotes   the Laplace-Beltrami operator of $\Gamma_r$ under the induced metric (see for instance \cite{mazzeo}). We also note that
\begin{equation}\label{eqLaplaceFermi2}
\Delta_{\Gamma_r}-\Delta_{\Gamma}=rB(r,s),
\end{equation}
where
\begin{equation}\label{eqLaplaceFermi3}
  B(r,s)=\sum_{i=1}^{N-1}b^i(r,s)\frac{\partial }{\partial s_i}+\sum_{i,j=1}^{N-1}b^{ij}(r,s)\frac{\partial^2 }{\partial s_i\partial s_j}
\end{equation}
with smooth coefficients $b^i, b^{ij}$. Moreover, the above relations can be differentiated in the obvious way.

\subsection{The Laplacian in the stretched, scaled and shifted coordinates $(z,s)$}\label{subsecLaplzt}
For $(r,s)\in \Gamma(2d_0)$, we define
\begin{equation}\label{eqzetaCoord}
z=\epsilon^{-1}b(s)\left(r-\epsilon \zeta(s)\right)
\end{equation}
(keep in mind (\ref{eqinner}), (\ref{eqb}) and (\ref{eqzeta})).
Given a smooth function $\varphi$ in $\Gamma(2d_0)$, we can define a new function $\Phi$ as follows
\begin{equation}\label{eqphiStretch}
\varphi(r,s)=\Phi(z,s).
\end{equation}

We have
\[
\varphi_r=\epsilon^{-1}b\Phi_z,\ \ \varphi_{rr}=\epsilon^{-2}b^2\Phi_{zz},
\]
\[
\nabla_\Gamma\varphi=(b^{-1}z\nabla_\Gamma b-b\nabla_\Gamma\zeta)\Phi_z+\nabla_\Gamma\Phi,
\]
and
\[\begin{array}{rcl}
    \Delta_\Gamma\varphi & = & (b^{-1}z\nabla_\Gamma b-b\nabla_\Gamma\zeta)^2\Phi_{zz}+\Delta_\Gamma\Phi+2(b^{-1}z\nabla_\Gamma b-b\nabla_\Gamma\zeta)\nabla_\Gamma\Phi_{z} \\
     & &\\
     &  & +(b^{-1}z\Delta_\Gamma b-2\nabla_\Gamma b\nabla_\Gamma\zeta-b\Delta_\Gamma\zeta)\Phi_{z}.
  \end{array}
\]

Hence, in light of (\ref{eqb}), (\ref{eqzeta}), (\ref{eqLaplaceFermi1}), (\ref{eqLaplaceFermi2}) and (\ref{eqLaplaceFermi3}), we deduce that
\begin{equation}\label{eqLaplaceFermi41}
                                          \begin{array}{rcl}
                                              b^{-2}\epsilon^2\Delta \varphi & = & \Phi_{zz}+b^{-2}\epsilon^2\Delta_\Gamma \Phi+\epsilon^2 \mathcal{O}(1+z^2)\Phi_{zz}+\epsilon^2\mathcal{O}\left(1+|z| \right)\nabla_\Gamma\Phi_z \\
                                              &   &   \\
                                              &   &  +\epsilon^3\mathcal{O}\left(1+|z|\right)\nabla^2_\Gamma \Phi  +\epsilon \mathcal{O}\left(1\right)  \Phi_z+\epsilon^2 \mathcal{O}\left(1\right)\nabla_\Gamma \Phi,   \\
                                          \end{array}
                                        \end{equation}
where by $\mathcal{O}(\cdot)$ we denote a generic function which satisfies
\begin{equation}\label{eqOss}
\left|\mathcal{O}(Z,s)\right|+\left|\nabla_\Gamma\mathcal{O}(Z,s)\right|+\left|\nabla^2_\Gamma\mathcal{O}(Z,s)\right|+|Z|\left|\partial_z\mathcal{O}(Z,s)\right|\leq C |Z|.
\end{equation}
\subsection{A local blow-up analysis near $\Gamma$}\label{subseqblowupequil}The following lemma will be applied to solutions of (\ref{eqinhomog}) and their tangential derivatives throughout the proof of Theorem \ref{thmMain}.
Related results in different contexts can be found in \cite[Lem. 6.2]{delAnnals}  and \cite[pgs. 3456-3457]{delPV24}.

\begin{lem}\label{lemBU}
Assume that $\varphi_n \in \left[C^2\left(\Gamma(d) \right)\right]^2$ and $g_n \in \left[C\left(\Gamma(d) \right)\right]^2$ satisfy (\ref{eqinhomog})   in $\Gamma(d)$ for some  $d\in (0,d_0)$ with $\epsilon_n\to 0$, and
\begin{equation}\label{eqBUlinfty}
  \|\varphi_n\|_{L^\infty\left(\Gamma(d) \right)}\leq1,\ \ \epsilon_n^2\|g_n\|_{L^\infty\left(\Gamma(d) \right)}\to 0.
\end{equation}

For each $p\in \Gamma$, passing to a subsequence if necessary, we have
\[\varphi_n\left(\epsilon_n b^{-1}(\epsilon_n s)z+\epsilon_n \zeta (\epsilon_n s), \epsilon_n s\right)
\to c_p \textbf{V}'(z)\ \textrm{in} \ C^1_{loc}(\mathbb{R}^N)\ \textrm{as}\ n\to \infty,
\]
for some $c_p\in \mathbb{R}$ and $\mathbf{V}=(V_1,V_2)$ as in Subsection \ref{subsectBerest}; where we have tacitly identified small geodesic balls on $\Gamma$ with  the corresponding neighborhood of the origin in $\mathbb{R}^{N-1}$ via the associated mapping.
\end{lem}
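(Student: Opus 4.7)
The plan is to set up a blow-up in both the normal and tangential directions around a fixed $p\in\Gamma$. Introduce local (normal) coordinates on $\Gamma$ identifying a neighborhood of $p$ with a neighborhood of $0\in\mathbb{R}^{N-1}$, and set $\tilde s=s/\epsilon_n$, so that $(z,\tilde s)$ with $z=\epsilon_n^{-1}b(\epsilon_n\tilde s)\bigl(r-\epsilon_n\zeta(\epsilon_n\tilde s)\bigr)$ parametrizes a domain in $\mathbb{R}^N$ that exhausts $\mathbb{R}^N$. Define the blow-up $\Phi_n(z,\tilde s):=\varphi_n\bigl(\epsilon_n b^{-1}(\epsilon_n\tilde s)z+\epsilon_n\zeta(\epsilon_n\tilde s),\,\epsilon_n\tilde s\bigr)$. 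Multiplying $\mathcal{L}\varphi_n=g_n$ by $\epsilon_n^2 b^{-2}(\epsilon_n\tilde s)$ and invoking (\ref{eqLaplaceFermi41}) together with the extra factor $\epsilon_n^{-1}$ that $\nabla_\Gamma$ picks up when rewritten in $\tilde s$, I would show that $\epsilon_n^2 b^{-2}\Delta\varphi_n$ becomes $\Phi_{n,zz}+b^{-2}(\epsilon_n\tilde s)\Delta_{\tilde s}\Phi_n$ modulo error terms of order $\epsilon_n$ on compact sets (using (\ref{eqOss})). The inner ansatz (\ref{eqinner}) combined with (\ref{eqb}) and the remainder bounds (\ref{eqinnerremalg})--(\ref{eqQ2}) yields $U_i/\epsilon_n\to b(p)V_i(z)$ locally uniformly, so the potential terms $\epsilon_n^{-4}U_iU_j\cdot\epsilon_n^2 b^{-2}$ converge to $V_i(z)V_j(z)$, while the $f_u$ contributions are of order $\epsilon_n^2$ and drop out. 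The rescaled system takes the form
\[
-\Phi_{n,zz}-b^{-2}\Delta_{\tilde s}\Phi_n+\mathcal{A}_n(z,\tilde s)\Phi_n=\epsilon_n^2 b^{-2}g_n+o(1),
\]
with $\mathcal{A}_n\to\mathcal{A}(z)$ equal to the coefficient matrix of $\mathcal{M}$ in (\ref{eqM}); by (\ref{eqBUlinfty}) the right-hand side tends to zero in $L^\infty_{loc}(\mathbb{R}^N)$.

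Since $\|\Phi_n\|_\infty\le 1$ and the system is uniformly elliptic with locally bounded coefficients, standard interior Schauder or $W^{2,p}$ estimates yield uniform $C^{1,\alpha}_{loc}(\mathbb{R}^N)$ bounds. Arzel\`a--Ascoli then supplies a subsequence converging in $C^1_{loc}(\mathbb{R}^N)$ to a bounded $\Phi^\infty\in C^2(\mathbb{R}^N)$ solving
\[
\mathcal{M}\bigl(\Phi^\infty(\cdot,\tilde s)\bigr)(z)=b(p)^{-2}\,\Delta_{\tilde s}\Phi^\infty(z,\tilde s)\qquad\text{on }\mathbb{R}^N.
\]

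The final and hardest step is a Liouville-type identification $\Phi^\infty(z,\tilde s)=c_p\mathbf{V}'(z)$. The plan is to split $\Phi^\infty=c(\tilde s)\mathbf{V}'(z)+\Phi^\infty_\perp$, exploiting that $\mathbf{V}'\in\ker\mathcal{M}$ and that this kernel is one-dimensional within the class of bounded functions, together with a spectral-gap property of $\mathcal{M}$ on an appropriate orthogonal complement of $\mathbf{V}'$. Testing the limit equation against a suitable (exponentially decaying) dualizer of $\mathbf{V}'$ should give that $c(\tilde s)$ is a bounded harmonic function on $\mathbb{R}^{N-1}$, hence equal to a constant $c_p$ by the classical Liouville theorem. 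The orthogonal part $\Phi^\infty_\perp$ then satisfies the same PDE but with the coercive part of $\mathcal{M}$, so that a maximum-principle / energy argument in the spirit of \cite[Lem. 6.2]{delAnnals} and \cite[pp. 3456--3457]{delPV24} forces $\Phi^\infty_\perp\equiv 0$. The main obstacle is making the projection rigorous: $\mathbf{V}'$ is bounded but not integrable on $\mathbb{R}$ (its components do not decay at infinity), so a naive $L^2(\mathbb{R})$ pairing fails, and one has to either set up an exponentially weighted framework or localize carefully, leveraging the exponential decay of $V_1V_2$ and of $V_i''=V_iV_j^2$ at $\pm\infty$ to absorb the boundary contributions that arise from integrating by parts in $z$.
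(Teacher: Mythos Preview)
Your blow-up setup, the derivation of the rescaled equation, and the compactness argument via interior $W^{2,p}$ estimates are essentially identical to the paper's proof. The paper writes the rescaled system as $\mathcal{M}(\Psi)-b^{-2}\Delta_{\tilde\Gamma_\epsilon}\Psi+\text{(errors)}=b^{-2}\epsilon^2 G$ and passes to the limit to obtain a bounded classical solution of $\mathcal{M}(\Psi)-b^{-2}(0)\Delta_{\mathbb{R}^{N-1}}\Psi=0$ on $\mathbb{R}^N$, exactly as you do.

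The only substantive difference is the final Liouville step. The paper does \emph{not} attempt the projection--spectral-gap argument you outline; it simply invokes \cite[Thm.~1]{sourdisJDE}, which states precisely that any bounded classical solution of this limit problem must equal $c\mathbf{V}'(z)$ for some constant $c$. So the identification is a black box here, and you need not supply a direct proof. Your instinct that this is the hardest step is correct, and the obstacle you flag---that $\mathbf{V}'\notin L^2(\mathbb{R})$ so a naive orthogonal decomposition fails---is exactly why this Liouville theorem is a standalone result rather than a routine lemma. Your proposed workaround (weighted framework, exploiting the super-exponential decay of $V_iV_j^2$) is in the right spirit, but making it rigorous is nontrivial and is what \cite{sourdisJDE} is for.
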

\begin{proof}
We first note that, owing to (\ref{eqinner})--(\ref{eqQ2}) and (\ref{eqzetaCoord}), we have
\begin{equation}\label{eqpot1}
U_1^2(x)=\epsilon^2b^2(s)V_1^2(z)+\epsilon^3\mathcal{O}\left(|z|^3+1\right), \ U_2^2(x)=\epsilon^2b^2(s)V_2^2(z)+\epsilon^3\mathcal{O}(e^{-cz}),
\end{equation}
$x\in \Gamma\left(2|\ln \epsilon|\epsilon \right)\cap \Omega_1$, together with the corresponding relations in $\Gamma\left(2|\ln \epsilon|\epsilon \right)\cap \Omega_2$, and
\begin{equation} \label{eqpot2}
U_1U_2(x)=\epsilon^2b^2(s)V_1V_2(z)+\epsilon^3\mathcal{O}\left(e^{-c|z|}\right), \ x \in \Gamma\left(2|\ln \epsilon|\epsilon \right),
\end{equation}
where, as always, the generic remainder satisfies (\ref{eqOss}).

Without loss of generality, we may assume that $p\in \Gamma$ is at the origin and that the outer  unit normal vector to $ \Gamma$ there is $(0,\cdots,1)$.
Hence, near the origin $\Gamma$ is the graph of a function $h:\mathbb{R}^{N-1}\to \mathbb{R}$ such that $h(0)=0$ and $\nabla h(0)=0$.

Let
\[
\Psi_n(z,y)=\Phi_n (z,\epsilon_n y),
\]
where $\Phi$ is as in (\ref{eqphiStretch}). Multiplying both sides of (\ref{eqinhomog}) by $b^{-2}(\epsilon_n y)\epsilon_n^2$, using (\ref{eqepsilonDef}), (\ref{eqL}), (\ref{eqLaplaceFermi41}), (\ref{eqpot1}), (\ref{eqpot2}), and dropping the subscript $n$ for notational convenience, we find that
\begin{equation}\label{eqlarge}
  \begin{split}
     \mathcal{M}(\Psi) -b^{-2}\Delta_{\tilde{\Gamma}_\epsilon} \Psi+\epsilon^2 \mathcal{O}(1+z^2)\Psi_{zz}+\epsilon\mathcal{O}\left(1+|z| \right)\nabla_{\tilde\Gamma_\epsilon}\Psi_z
\ \ \ \ \ \ \ \ \ \ \ \ \ \ \ \ \ \ \ \ \ \ \  \ \ \ \ \ \ \ &  \\
      \ \ \ \ \ \ \  +\epsilon\mathcal{O}\left(1+|z|\right)\nabla^2_{\tilde{\Gamma}_\epsilon} \Psi  +\epsilon \mathcal{O}\left(1\right)  \Psi_z+\epsilon \mathcal{O}\left(1\right)\nabla_{\tilde{\Gamma}_\epsilon} \Psi+\epsilon\mathcal{O}\left(|z|^3+1\right)\Psi=b^{-2} \epsilon^2G, &
  \end{split}
\end{equation}
in an $N$-dimensional ball of radius $c/\epsilon$, where
\begin{equation}\label{eqM}
\mathcal{M}=-\partial^2_z+\left(\begin{array}{cc}
                                  V_1^2 & 2V_1V_2 \\
                                  2V_1V_2 & V_2^2
                                \end{array}
 \right)
\end{equation}
is the linearized operator associated to (\ref{eqBerest}),  \[\tilde{\Gamma}_\epsilon=\left\{\left(\epsilon y,h(\epsilon y)\right),\ |y|<c/\epsilon \right\}
\ \textrm{and}\
G(z,y)=g(r,\epsilon y) \ (\textrm{recall}\  (\ref{eqzetaCoord})).
\]

We note that the Laplace-Beltrami operator of $\tilde{\Gamma}_\epsilon$ written in local coordinates has the form
\[
\Delta_{\tilde{\Gamma}_\epsilon}=a^0_{ij}(\epsilon y)\partial_{ij}+\epsilon b^0_{j}(\epsilon y)\partial_{j},
\]
with
\[
a^0_{ij}(\epsilon y)\to \delta_{ij}\ \textrm{uniformly over compacts as}\ \epsilon\to 0\ \textrm{and}\ b^0_{j}(\epsilon y)=\mathcal{O}(1),
\]
see for instance \cite[pg. 1517]{delAnnals}.
Hence, since
\[
|\Psi_n|\leq 1, \ \epsilon_n^2G_n\to 0\ \textrm{uniformly over compacts as}\ n\to \infty,
\]
we deduce from (\ref{eqlarge}) using standard $W^{2,p}$ elliptic estimates and the usual diagonal argument that
\[
\Psi_n\to \Psi_\infty \ \textrm{in}\ C^{1,\alpha}_{loc}(\mathbb{R}^N)\ \textrm{as}\ n\to \infty,
\]
where $\Psi_\infty$ is a classical bounded solution to the limit problem
\begin{equation}\label{pao13}
\mathcal{M}(\Psi)-b^{-2}(0)\Delta_{\mathbb{R}^{N-1}}\Psi=0\ \textrm{in}\ \mathbb{R}^N.
\end{equation}
Now, we conclude from \cite[Thm. 1]{sourdisJDE} (after a simple rescaling) that
\[
\Psi_{\infty}(z,y)=c\textbf{V}'(z)\ \textrm{for some constant}\ c,
\]
from which the assertion of the lemma follows at once.
\end{proof}

\subsection{The contradicting sequence}\label{subsecContraSeq}
In order to prove Theorem \ref{thmMain}, we will argue by contradiction. To this end, we suppose that there exists a sequence $\epsilon_n\to 0$ and $\varphi_n$, $g_n$ which satisfy (\ref{eqinhomog}) for $\epsilon=\epsilon_n$ such that
\begin{equation}\label{eqcontraNorm}
  \|\varphi_n\|_0=1\ \textrm{and}\ \epsilon_n\|g_n\|_1\to 0.
\end{equation}

\subsection{Profile for $\varphi_n$, $\nabla\varphi_n$ in the inner zone $\Gamma(M\epsilon_n)$ with $M\gg 1$ independent of $n\gg 1$} Since $\|\cdot\|_{L^\infty(\Omega)}\leq\|\cdot\|_i$, $i=0,1$, Lemma \ref{lemBU} is more than enough to imply the following corollary.
\begin{cor}\label{corin1}
  The functions $\varphi_n$ satisfy the assertion of Lemma \ref{lemBU}.
\end{cor}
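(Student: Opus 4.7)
The plan is simply to verify the two hypotheses (\ref{eqBUlinfty}) of Lemma \ref{lemBU} for the contradicting sequence of Subsection \ref{subsecContraSeq}, restricted to the tubular neighborhood $\Gamma(d)$, and then quote the lemma.

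First, the uniform bound on $\varphi_n$ is immediate from the very definition (\ref{eqNorm0}): the summand $\|\varphi\|_{L^\infty(\Omega)}$ enters $\|\varphi\|_0$ with coefficient one, so $\|\varphi_n\|_{L^\infty(\Gamma(d))}\leq \|\varphi_n\|_{L^\infty(\Omega)}\leq \|\varphi_n\|_0 = 1$, which is the first half of (\ref{eqBUlinfty}).

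Next I would handle the forcing term. The weight $w$ in (\ref{eqweight}) satisfies $w(r)\geq 1$ for every $r\in[-2d,2d]$, and the same for $w(-r)$, so the weighted sup-norms of $g_1,g_2$ appearing in (\ref{eqnorm1}) dominate their unweighted counterparts on $\Gamma(d)$; hence $\|g_n\|_{L^\infty(\Gamma(d))}\leq \|g_n\|_1$. Combining this with the normalization $\epsilon_n\|g_n\|_1\to 0$ from (\ref{eqcontraNorm}) and with $\epsilon_n\to 0$ yields
\[
\epsilon_n^2\|g_n\|_{L^\infty(\Gamma(d))}\;\leq\;\epsilon_n\bigl(\epsilon_n\|g_n\|_1\bigr)\;\to\;0,
\]
which is the second condition in (\ref{eqBUlinfty}). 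Lemma \ref{lemBU} therefore applies componentwise to the contradicting sequence and gives exactly the claim.

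There is essentially no obstacle here, as the author indicates in the sentence preceding the statement: the norms $\|\cdot\|_0$ and $\|\cdot\|_1$ have been tailored so that the bare $L^\infty$ bounds needed by the blow-up lemma come for free, and in fact one has a full power of $\epsilon_n$ of slack on the right-hand side. I expect that this slack is precisely what will be consumed in the next subsections, when the same blow-up must be applied to tangential derivatives $\partial_\sigma\varphi_n$, whose equation inherits extra $\epsilon_n^{-1}$-factors from differentiating the coefficients of $\mathcal{L}$.
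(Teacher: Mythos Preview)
The proposal is correct and matches the paper's own justification, which is the single sentence preceding the corollary: since $\|\cdot\|_{L^\infty(\Omega)}\leq\|\cdot\|_i$ for $i=0,1$, the hypotheses (\ref{eqBUlinfty}) of Lemma \ref{lemBU} follow immediately from (\ref{eqcontraNorm}). Your closing remark about the spare factor of $\epsilon_n$ being consumed later when differentiating tangentially is also on point (cf.\ Subsection \ref{subsecTange}).
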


\subsection{Exponential decay of $\varphi_{i,n}$ in $\Omega_j\setminus \Gamma(C\epsilon_n)$ with $i\neq j$}\label{subsecExpDecay}In this subsection, we will prove the following useful proposition (cf. the beginning of the proof of Proposition 4.1 in \cite{casteras} for the radially symmetric case).
\begin{pro}\label{proexpdec}
If $i\neq j$, then
\[|\varphi_{i,n}(x)|\leq C e^{-c\textrm{dist}(x,\Gamma)/\epsilon_n}+\mathcal{O}(\epsilon_n^\infty), \ x\in \Omega_j,\]
with constants $c,C>0$ independent of large $n$.
\end{pro}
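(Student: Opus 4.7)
The plan is a maximum principle argument with a carefully chosen exponentially decaying barrier. I treat the case $i=1$, $j=2$ (the other is symmetric). In $\Omega_2$, the first equation of (\ref{eqinhomog}) reads
\begin{equation*}
-\Delta\varphi_{1,n} + V_n\,\varphi_{1,n} = h_n,
\end{equation*}
with $V_n := \epsilon_n^{-4}U_{2,n}^2 - f_u(U_{1,n},x)$ and $h_n := g_{1,n} - 2\epsilon_n^{-4}U_{1,n}U_{2,n}\varphi_{2,n}$. The idea is to dominate $|\varphi_{1,n}|$ by a supersolution of the form $\bar\varphi(x) = K_1 e^{-c\,\textrm{dist}(x,\Gamma)/\epsilon_n} + K_2\,\epsilon_n^N$, with $K_1, c>0$ fixed and $K_2=K_2(N)$ chosen for each $N$ (iterating over $N$ to obtain the $\mathcal{O}(\epsilon_n^\infty)$ term).

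The first step is coercivity of $V_n$. Fix $M\gg 1$. For $x\in\Omega_2$ with $\textrm{dist}(x,\Gamma)\ge M\epsilon_n$ (and away from $\partial\Omega$), we have $V_n(x)\ge c_0/\epsilon_n^2$. Inside $\Gamma(2|\ln\epsilon_n|\epsilon_n)\cap\Omega_2$, the ansatz (\ref{eqinner}) combined with the linear growth $V_2(z)=V_1(-z)\sim -Az$ as $z\to-\infty$ from (\ref{eqBerestAsympt}), and the bounds (\ref{eqWdiaz2})--(\ref{eqQ2}), yields $U_{2,n}(x)\ge c|r(x)|$ for $|r|\ge M\epsilon_n$; hence $V_n\ge cr^2/\epsilon_n^4 - C \ge cM^2/\epsilon_n^2$ for $M$ large. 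In $\Omega_2\setminus\Gamma(|\ln\epsilon_n|\epsilon_n)$, (\ref{eqoutdef1}) gives $U_{2,n}\approx w_2$, bounded below on compact subsets of $\Omega_2$, so in fact $V_n\ge c/\epsilon_n^4$ there.

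Next, the right-hand side $h_n$ is controlled as follows. The weight $w(r)=e^{2|r|/\epsilon_n}$ for $r\le 0$ from (\ref{eqweight}), the norm (\ref{eqnorm1}), and the contradiction hypothesis (\ref{eqcontraNorm}) give $|g_{1,n}|\le o(\epsilon_n^{-1})\,e^{-2|r|/\epsilon_n}$ in $\Gamma(2d)\cap\Omega_2$ and $|g_{1,n}|=\mathcal{O}(\epsilon_n^\infty)$ in $\Omega_2\setminus\Gamma(2d)$. For the coupling, using the Gaussian decay $V_1(z)=\mathcal{O}(e^{-cz^2})$ as $z\to -\infty$ together with (\ref{eqWdiaz2})--(\ref{eqQ2}) and $\|\varphi_{2,n}\|_{L^\infty}\le 1$ from $\|\varphi_n\|_0=1$, one finds $\epsilon_n^{-4}|U_{1,n}U_{2,n}\varphi_{2,n}|\le C|\ln\epsilon_n|^4$ in $\Omega_2\cap(\Gamma(2|\ln\epsilon_n|\epsilon_n)\setminus\Gamma(M\epsilon_n))$ — the leading contribution coming from the polynomial part $Q_{1,n}=\mathcal{O}(r^3+\epsilon_n^3)$ from (\ref{eqinnerremalg}) paired with $U_{2,n}=\mathcal{O}(|r|)$ — while this quantity is $\mathcal{O}(\epsilon_n^\infty)$ in $\Omega_2\setminus\Gamma(2d)$. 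A direct Fermi-coordinates computation using (\ref{eqLaplaceFermi1}) then yields $-\Delta\bar\varphi+V_n\bar\varphi\ge (V_n-c^2/\epsilon_n^2+O(\epsilon_n^{-1}))\,K_1 e^{-c\textrm{dist}/\epsilon_n} + V_n K_2\epsilon_n^N$, which by the coercivity step dominates $|h_n|$ provided $c>0$ is fixed small (so the prefactor $\epsilon_n^{2-2c}$ coming from the worst point $|r|=2|\ln\epsilon_n|\epsilon_n$ beats the $|\ln\epsilon_n|^4$ forcing), $K_1\ge e^{cM}$ (so that $\bar\varphi\ge 1\ge|\varphi_{1,n}|$ on $\{\textrm{dist}(x,\Gamma)=M\epsilon_n\}$), and $K_2=K_2(N)$ is large enough to absorb the $\mathcal{O}(\epsilon_n^\infty)$ forcing in the outer zone. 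With $\varphi_{1,n}=0\le\bar\varphi$ on $\partial\Omega\cap\overline{\Omega_2}$, the maximum principle applied to $\pm\varphi_{1,n}-\bar\varphi$ in $\Omega_2\setminus\overline{\Gamma(M\epsilon_n)}$ gives $|\varphi_{1,n}|\le\bar\varphi$ there; inside $\Gamma(M\epsilon_n)\cap\Omega_2$ the assertion is trivial since $\textrm{dist}(x,\Gamma)\le M\epsilon_n$.

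The principal technical difficulty is the thin region near $\partial\Omega\cap\overline{\Omega_2}$, where $U_{2,n}$ vanishes and $V_n$ may turn negative in a strip of width $\mathcal{O}(\epsilon_n^2)$, invalidating the classical maximum principle there. I anticipate handling this either by excluding such a strip and controlling $\varphi_{1,n}$ inside it through $\varphi_{1,n}|_{\partial\Omega}=0$ combined with the uniform $C^2$ bound on $\Omega\setminus\Gamma(d)$ built into (\ref{eqNorm0}), or by supplementing $\bar\varphi$ with a correction proportional to $\textrm{dist}(x,\partial\Omega)\cdot\epsilon_n^{N'}$ (or the first Dirichlet eigenfunction of $-\Delta$ on $\Omega_2$) that vanishes on $\partial\Omega$ and restores the supersolution property. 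The $\mathcal{O}(\epsilon_n^\infty)$ slack in the statement affords exactly the flexibility required for such corrections.
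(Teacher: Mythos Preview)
Your barrier argument is essentially the paper's for the main region: establish $\epsilon_n^{-4}U_{2,n}^2\ge c\epsilon_n^{-2}$ in $\Omega_2\cap\{\textrm{dist}(\cdot,\Gamma)\ge M\epsilon_n\}$, control the forcing by the exponential weight in $\|g\|_1$ and by (\ref{eqremexpdecay2}) for the coupling, and sweep with an exponential supersolution. One small slip: for $r<0$ the ``analogous'' assumptions swap the roles of $Q_1$ and $Q_2$, so in $\Omega_2$ it is $Q_1$ that obeys the exponential bound (\ref{eqQ2}), not (\ref{eqinnerremalg}); your $|\ln\epsilon_n|^4$ estimate for the coupling is therefore wrong in its derivation, though the correct bound $\epsilon_n^{-4}U_1U_2\le C\epsilon_n^{-2}e^{-c|r|/\epsilon_n}$ (cf.\ (\ref{eqremexpdecay2})) is only better.

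The genuine gap is the strip near $\partial\Omega\cap\overline{\Omega_2}$, and your two proposed fixes do not close it. The $C^2$ bound built into $\|\cdot\|_0$ gives only $|\varphi_{1,n}(x)|\le C\,\textrm{dist}(x,\partial\Omega)$, hence $O(\epsilon_n^2)$ in the degenerate strip, not $O(\epsilon_n^\infty)$; feeding this back as a boundary value for the barrier yields at best $|\varphi_{1,n}|\le Ce^{-c\textrm{dist}(\cdot,\Gamma)/\epsilon_n}+C\epsilon_n^2$, and there is no evident bootstrap. The eigenfunction correction fails for the same reason a narrow-domain principle would: the strip has width $\sim\epsilon_n^2$ but the (possibly negative) zero-order term $-f_u(U_1,x)$ is only $O(1)$, not $O(\epsilon_n^{-4})$, so $-\Delta+V_n$ need not satisfy a maximum principle there.

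The paper resolves this differently. Having already obtained (\ref{eqexp1}), one knows $\varphi_{1,n}=\mathcal{O}(\epsilon_n^\infty)$ on the \emph{inner} boundary of a fixed tubular neighborhood $\mathcal{S}$ of $\partial\Omega\cap\overline{\Omega_2}$. After subtracting a harmonic function matching these tiny boundary values, the remainder $\psi$ has zero Dirichlet data on $\partial\mathcal{S}$ and satisfies
\[
-\epsilon_n^4\Delta\psi + w_2^2\,\psi = \mathcal{O}(\epsilon_n)\,\textrm{dist}(x,\partial\Omega)^2\,\psi + \mathcal{O}(\epsilon_n^4)\,\psi + \mathcal{O}(\epsilon_n^\infty)
\]
in $\mathcal{S}$. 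At an interior maximum $x_n$ the Laplacian term is nonpositive, and Hopf's lemma gives $w_2(x_n)\ge c\,\textrm{dist}(x_n,\partial\Omega)$; the first right-hand term is then absorbed into the left, and after checking (via a crude gradient bound) that $x_n$ cannot lie within $\epsilon_n^{M}$ of $\partial\mathcal{S}$, the remaining $\mathcal{O}(\epsilon_n^\infty)/\textrm{dist}(x_n,\partial\Omega)^2$ is still transcendentally small. This forces $\|\psi\|_{L^\infty(\mathcal{S})}=\mathcal{O}(\epsilon_n^\infty)$. The $\mathcal{O}(\epsilon_n^4)\psi$ term (present when $f\not\equiv 0$ near $\partial\Omega$) requires an auxiliary a~priori estimate for the operator $-\epsilon^4\Delta+w_2^2$ on $\mathcal{S}$, which the paper proves separately. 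This boundary step is where the real work lies, and it is not supplied by either of your suggestions.
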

\begin{proof}
We will provide the proof only for the case $i=2$ and $j=1$, since the remaining case can be handled analogously.

From (\ref{eqBerestAsympt}), (\ref{eqinner}), (\ref{eqb}), (\ref{eqzeta}) and (\ref{eqinnerremalg}), we obtain that
there exists a large constant $C>0$ such that
\begin{equation}\label{eqU1lower1}
U_1(r,s)\geq c \epsilon, \  C\epsilon \leq r \leq |\ln \epsilon|\epsilon;
\end{equation}
here and elsewhere $c/C$ denotes a small/large generic constant which is independent of large $n$ and whose value will decrease/increase from line to line as the proof progresses; abusing notation we will frequently drop the subscript $n$.
On the other side, recalling Definition \ref{defSep} and that $f(0,x)=0$ (keep in mind (\ref{eqfodd})), by Hopf's boundary point lemma on $\Gamma$ and $\partial \Omega$,
we deduce that \begin{equation}\label{eqhopf24}\partial_\mu w_1<0\ \textrm{on}\ \partial \Omega_1,\ \textrm{i.e.}\ w_1(x)\geq c \textrm{dist}(x,\partial\Omega_1),\end{equation}
where $\mu$ stands for the outer unit normal vector to $\partial \Omega_1$
 (see also (\ref{eqwHopf})). In turn, thanks to  (\ref{eqoutdef1}) and (\ref{eqU1lower1}),   we have
\begin{equation}\label{eqpot3}U_1(x)\geq c\epsilon\ \textrm{if} \ x\in \Omega_1 \ \textrm{and}\ \textrm{dist}(x,\partial \Omega_1)\geq C\epsilon.\end{equation}

Therefore, we see from  (\ref{eqb}), (\ref{eqoutdef1}), (\ref{eqL}), (\ref{eqinhomog}), (\ref{eqpot2}),  (\ref{eqcontraNorm}) (recalling the definition of the norms from (\ref{eqNorm0})-(\ref{eqnorm1})), (\ref{eqhopf24})  and (\ref{eqpot3})  that the second component of $\varphi$ satisfies
\begin{equation}\label{eqschrodinger}
-\epsilon^2\Delta \varphi_2+P(x)\varphi_2=Q(x), \ |\varphi_2|\leq 1,
\end{equation}
where
\begin{equation}\label{eqpot4}
c\leq P\leq C,\ |Q|\leq C\exp\{-c\textrm{dist}(x,\Gamma)/\epsilon\}+\mathcal{O}(\epsilon^\infty)\ \textrm{if} \ x\in \Omega_1 \ \textrm{and}\ \textrm{dist}(x,\partial \Omega_1)\geq C\epsilon.
\end{equation}

Then, a well known argument, see for instance \cite[Prop. 4.1]{sz24}, yields
\begin{equation}\label{eqexp1}
|\varphi_2|\leq C\exp\{-c\textrm{dist}(x,\partial \Omega_1)/\epsilon\}+\mathcal{O}(\epsilon^\infty)\ \textrm{if}\ x\in \Omega_1\ \textrm{and}\ \textrm{dist}(x,\partial \Omega_1)\geq C \epsilon.
\end{equation}

Unfortunately, the lower bound in the beginning of (\ref{eqpot4}) breaks down near $\partial \Omega_1 \setminus \Gamma$. More precisely, in light of (\ref{eqoutdef1}) and the fact that $w_1$ and $U_1$ vanish on $\partial\Omega_1\setminus \Gamma$, we have
\begin{equation}\label{eqS}
\epsilon^2P=U_1^2=w_1^2+\mathcal{O}(\epsilon)\left[\textrm{dist}(x,\partial \Omega_1\setminus \Gamma)\right]^2\ \textrm{in}\ \mathcal{S}=\left\{x\in \Omega_1\ :\ \textrm{dist}(x,\partial \Omega_1\setminus \Gamma)<d_1\right\},
\end{equation}
where $d_1$ is independent of $\epsilon$ and such that the corresponding mapping to $\tau$ (recall Subsection \ref{subsectfermi})
is well defined.
Nevertheless, based on (\ref{eqhopf24}) and (\ref{eqschrodinger}), we will be able to extend the validity of the bound (\ref{eqexp1}) up to $\partial \Omega_1 \setminus \Gamma$. It turns out that this task simplifies considerably if $f$ is identically equal to zero for $x\in \mathcal{S}$.
Henceforth, we will continue under this simplifying assumption   and refer to Remark \ref{remGenScrod} below for the general case.
By combining (\ref{eqschrodinger}),  (\ref{eqpot4}), (\ref{eqexp1})  and (\ref{eqS}), we find that $\varphi_2$ satisfies
\begin{equation}\label{eqSchrod2bi}\left\{\begin{array}{l}
-\epsilon^4 \Delta \varphi+w_1^2\varphi=\mathcal{O}(\epsilon)\left[\textrm{dist}(x,\partial\Omega_1\setminus \Gamma)\right]^2\varphi+\mathcal{O}(\epsilon^\infty)
\ \textrm{in}\ \mathcal{S};
\\
\ \varphi=\mathcal{O}(\epsilon^\infty)\ \textrm{on} \ \partial{\mathcal{S}}\cap \Omega,\
\varphi=0\ \textrm{on} \ \partial{\mathcal{S}}\cap \partial\Omega.
                                          \end{array}\right.
\end{equation}
In order to have homogeneous Dirichlet boundary conditions, we will add to $\varphi_2$ a harmonic correction such that
\[-\Delta h=0\ \textrm{in}\ \mathcal{S};\ h=-\varphi_2\ \textrm{on}\ \partial \mathcal{S}. \]
Clearly,  the maximum principle yields
\begin{equation}\label{eqhnorm}\|h\|_{L^\infty(\mathcal{S})}=\mathcal{O}(\epsilon^\infty).\end{equation}Then, letting
\begin{equation}\label{eqpsidef24yo}
\psi=\varphi_2+h,
\end{equation}
we obtain from (\ref{eqSchrod2bi}) that
\begin{equation}\label{eqschrod3c}
-\epsilon^4 \Delta \psi+w_1^2\psi=\mathcal{O}(\epsilon)\left[\textrm{dist}(x,\partial\Omega_1\setminus \Gamma)\right]^2\psi
+\mathcal{O}(\epsilon^\infty)
\ \textrm{in}\ \mathcal{S};
\ \psi=0\ \textrm{on} \ \partial{\mathcal{S}}.\end{equation}
Now, we will use a maximum principle type of argument in order to show that
\begin{equation}\label{eqpsiuniformtrans}
\psi=\mathcal{O}(\epsilon^\infty)\ \textrm{uniformly  on}\  \bar{\mathcal{S}}.\end{equation}
To this end, let us argue by contradiction and assume that there exists an $M>1$ (independent of $n$) such that
\begin{equation}\label{eqcontrapsimeat}
\|\psi\|_{L^\infty(\mathcal{S})}\geq c \epsilon^M,
\end{equation}
for some subsequence $\epsilon_n \to 0$.
  Without loss of generality, we may assume that
\begin{equation}\label{eqmaxpri24b}
\psi(x_n)=\|\psi\|_{L^\infty(\mathcal{S})}\ \textrm{for some}\ x_n\in \mathcal{S},\ \textrm{and therefore}\ \Delta \psi(x_n)\leq 0.
\end{equation}
We first observe that $x_n$ cannot get 'too close' to $\partial \mathcal{S}$.
Indeed, from (\ref{eqcontraNorm}) and (\ref{eqschrod3c}) we see that
\[
\Delta \psi=\mathcal{O}(\epsilon^{-4}) \ \textrm{uniformly on}\ \bar{\mathcal{S}};\ \psi=0\ \textrm{on}\ \partial \mathcal{S}.
\]
Then, by standard elliptic regularity theory up to the boundary (see also \cite[Lem. A.2]{bbh24}), we obtain that
\[
|\nabla \psi|=\mathcal{O}(\epsilon^{-4})  \ \textrm{uniformly on}\ \bar{\mathcal{S}}.
\]
So, in view of (\ref{eqcontrapsimeat}), (\ref{eqmaxpri24b}) and the fact that $\psi=0$ on $\partial \mathcal{S}$, we find that
\begin{equation}\label{eqviastrada}
\textrm{dist}(x_n,\partial \mathcal{S})\geq c \epsilon^{M+4}.
\end{equation}
Substituting $x=x_n$ in (\ref{eqschrod3c}), keeping in mind (\ref{eqhopf24}) and  (\ref{eqmaxpri24b}), gives
\[
\|\psi\|_{L^\infty(\mathcal{S})}\leq C\epsilon \|\psi\|_{L^\infty(\mathcal{S})}+\frac{\mathcal{O}(\epsilon^\infty)}{\left[\textrm{dist}(x_n,\partial \mathcal{S}\cap \partial\Omega)\right]^2}.
\]
On the other hand, the above relation via (\ref{eqviastrada}) readily yields
\[
\|\psi\|_{L^\infty(\mathcal{S})}=\mathcal{O}(\epsilon^\infty),
\]
which contradicts (\ref{eqcontrapsimeat}). Hence, we deduce that (\ref{eqpsiuniformtrans}) holds.
Consequently, by combining  (\ref{eqhnorm}), (\ref{eqpsidef24yo})  and (\ref{eqpsiuniformtrans}), we infer that
\[
\|\varphi_2 \|_{L^\infty(\mathcal{S})}=\mathcal{O}(\epsilon^\infty).
\]

The assertion of the proposition now follows at once from (\ref{eqcontraNorm}), (\ref{eqexp1}) and the above relation.
\end{proof}

\begin{rem}\label{remGenScrod}
In this remark, we will indicate how to deal with the general case in  Proposition \ref{proexpdec}  with $f$ nontrivial  near $\partial \Omega$.
We insist that everything up to (\ref{eqexp1}) is valid under this general assumption. However, now (\ref{eqSchrod2bi}) and in turn (\ref{eqschrod3c})   include a term of
the form $\mathcal{O}(\epsilon^4)\varphi$ and $\mathcal{O}(\epsilon^4)\psi$, respectively. From the latter relation, thanks to Lemma \ref{lemA1}  of Appendix \ref{ApendixA}, we obtain
\[
\|\psi\|_{L^\infty(\mathcal{S})}\leq C\epsilon \|\psi\|_{L^\infty(\mathcal{S})}+C\epsilon^2 \|\psi\|_{L^\infty(\mathcal{S})}+\mathcal{O}(\epsilon^\infty),
\]
i.e., $\psi$ is still transcendentally small in $\epsilon$ and we can conclude as before.
\end{rem}

\begin{rem}\label{remGradExpDec}
For future reference, we note that in light of Subsection \ref{subsecAnsatz}, we have
\begin{equation}\label{eqremexpdecay1}
  0<U_i(x)+\epsilon\left|\nabla U_i\right|\leq C\epsilon e^{-c\textrm{dist}(x,\Gamma)/\epsilon}+\mathcal{O}(\epsilon^\infty),\ x\in \Omega_j,\ i\neq j,
\end{equation}
and
\begin{equation}\label{eqremexpdecay2}
  0<U_1U_2(x)+\epsilon\left|\nabla (U_1U_2)\right|\leq C\epsilon^2e^{-c\textrm{dist}(x,\Gamma)/\epsilon}+\mathcal{O}(\epsilon^\infty),\ x\in \Omega.
\end{equation}
In turn, from (\ref{eqL}), (\ref{eqinhomog}), (\ref{eqcontraNorm}), Proposition \ref{proexpdec}, and standard interior and boundary elliptic estimates (see also
\cite[Lems. A.1, A.2]{bbh24}), we obtain
\[
|\nabla \varphi_i(x)|=\mathcal{O}(\epsilon^\infty)\ \textrm{on}\ \overline{\Omega_j}\setminus \Gamma(\delta),\ i\neq j,
\]
for any $\delta\in (0,d_0)$ independent of $n$ ($d_0$ as in Subsection \ref{subsectfermi}).
\end{rem}

\subsection{Convergence of the contradicting sequence in $C^{2,\beta}_{loc}(\Omega\setminus \Gamma)$}\label{subsecoutC1Star2fall}
The following proposition deals with the behaviour of $(\varphi_{1,n},\varphi_{2,n})$ in the outer region $\Omega\setminus \Gamma(\delta)$, $\delta\in (0,d_0)$ independent of $n$, as $n\to \infty$.\begin{pro}\label{proOutC2}
There exists a subsequence which we still denote by $(\varphi_{1,n},\varphi_{2,n})$ such that
\[
(\varphi_{1,n},\varphi_{2,n})\to(\mathds{1}_{\Omega_1}\varphi_{\infty},\mathds{1}_{\Omega_2}\varphi_{\infty})
\ \textrm{in}\ C_{loc}^{2,\beta}(\Omega \setminus \Gamma) \times C_{loc}^{2,\beta}(\Omega \setminus \Gamma) \ \textrm{as}\ n\to \infty,
\]
for any $\beta \in (0,1)$, where $\varphi_{\infty}\in C^2(\Omega\setminus \Gamma)\cap C(\bar{\Omega}\setminus \Gamma)$ satisfies
\begin{equation}\label{eqlimit}
-\Delta \varphi=f_u(w,x)\varphi \ \textrm{in}\ \Omega\setminus \Gamma; \ \varphi=0\ \textrm{on}\ \partial \Omega.
\end{equation}
\end{pro}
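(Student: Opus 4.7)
The plan is to combine standard interior and boundary elliptic estimates with the strong exponential decay provided by Proposition~\ref{proexpdec} and Remark~\ref{remGradExpDec}, which effectively decouple the two components on each $\Omega_i$ away from $\Gamma$.

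Fix $\delta \in (0, d_0)$. I would rewrite the first component of (\ref{eqinhomog}) on $\overline{\Omega_1} \setminus \Gamma(\delta)$ as
\begin{equation*}
-\Delta \varphi_{1,n} - f_u(U_{1,n}, x)\varphi_{1,n} = -\epsilon_n^{-4} U_{2,n}^2\,\varphi_{1,n} - 2\epsilon_n^{-4} U_{1,n} U_{2,n}\,\varphi_{2,n} + g_{1,n}.
\end{equation*}
The outer ansatz (\ref{eqoutdef1}) gives $U_{1,n} \to w_1$ in $C^2(\overline{\Omega_1}\setminus\Gamma(\delta))$ and $U_{2,n} = \mathcal{O}_{C^2}(\epsilon_n^\infty)$, so $\epsilon_n^{-4} U_{2,n}^2 = \mathcal{O}(\epsilon_n^\infty)$; combining (\ref{eqremexpdecay2}) with Proposition~\ref{proexpdec} and Remark~\ref{remGradExpDec} shows that both cross terms on the right are transcendentally small in $C^1$. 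The $\epsilon^{-(1+\alpha)}$ weight in (\ref{eqnorm1}) combined with (\ref{eqcontraNorm}) yields $\|g_{1,n}\|_{L^\infty(\Omega_1 \setminus \Gamma(2d))} \leq \epsilon_n^{1+\alpha} \|g_n\|_1 = o(\epsilon_n^\alpha)$, so $g_{1,n} \to 0$ locally uniformly on $\overline{\Omega_1} \setminus \Gamma$.

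Since $\|\varphi_{1,n}\|_{L^\infty} \leq 1$, the right-hand side above is uniformly bounded in $L^\infty_{\mathrm{loc}}(\overline{\Omega_1} \setminus \Gamma)$, while $\varphi_{1,n} = 0$ on $\partial \Omega \cap \overline{\Omega_1}$. Interior and boundary $L^p$ estimates then provide, for every compact $K \subset \overline{\Omega_1} \setminus \Gamma$ and every $p < \infty$, a uniform bound $\|\varphi_{1,n}\|_{W^{2,p}(K)} \leq C_K$. Via Sobolev embedding, Arzel\`a-Ascoli and a diagonal argument I extract a subsequence with $\varphi_{1,n} \to \varphi_\infty^{(1)}$ in $C^{1,\beta}_{\mathrm{loc}}(\overline{\Omega_1} \setminus \Gamma)$ for every $\beta \in (0,1)$; passing to the limit in the equation shows $-\Delta \varphi_\infty^{(1)} = f_u(w_1, x)\varphi_\infty^{(1)}$ in $\Omega_1$, with $\varphi_\infty^{(1)} = 0$ on $\partial \Omega \cap \partial \Omega_1$. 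A Schauder bootstrap (the right-hand side now converges in $C^\beta_{\mathrm{loc}}$ because $f_u(U_{1,n}, x) \to f_u(w_1, x)$ in $C^1_{\mathrm{loc}}$ and $\varphi_{1,n} \to \varphi_\infty^{(1)}$ in $C^{1,\beta}_{\mathrm{loc}}$) upgrades the convergence to $C^{2,\beta}_{\mathrm{loc}}$.

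The analogous argument on $\overline{\Omega_2} \setminus \Gamma$ produces $\varphi_\infty^{(2)}$ satisfying the corresponding equation. Because (\ref{eqfodd}) forces $f_u$ to be even in $u$, one has $f_u(w_i, x) = f_u(w, x)$ on $\Omega_i$, so defining $\varphi_\infty := \varphi_\infty^{(i)}$ on $\Omega_i$ yields a solution of (\ref{eqlimit}). The remaining cross components $\varphi_{2,n}$ on $\overline{\Omega_1} \setminus \Gamma$ and $\varphi_{1,n}$ on $\overline{\Omega_2} \setminus \Gamma$ tend to zero in $L^\infty$ by Proposition~\ref{proexpdec} and Remark~\ref{remGradExpDec}, and the same elliptic regularity argument upgrades this to $C^{2,\beta}_{\mathrm{loc}}$. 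The main (essentially bookkeeping) obstacle is verifying that every term carrying the huge prefactor $\epsilon_n^{-4}$ is tamed: each such term pairs $\epsilon_n^{-4}$ with a factor that is either exponentially small across $\Gamma$ or $\mathcal{O}(\epsilon_n^\infty)$ in the outer region, and the required decay has been prepared precisely by Subsection~\ref{subsecExpDecay} and the outer ansatz of Subsection~\ref{subsubOutAns}.
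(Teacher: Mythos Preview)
Your proof is correct and follows essentially the same route as the paper: use Proposition~\ref{proexpdec} and Remark~\ref{remGradExpDec} together with the outer ansatz to kill the $\epsilon_n^{-4}$-weighted cross terms on each $\overline{\Omega_i}\setminus\Gamma(\delta)$, bound $g_{i,n}$ via the weights in $\|\cdot\|_1$, and then appeal to standard elliptic estimates and a diagonal argument. The only cosmetic difference is that the paper exploits directly the $C^1$ control of the right-hand side built into $\|g\|_1$ (and into Remark~\ref{remGradExpDec}) to get $C^{2,\beta}$ bounds in one Schauder step, whereas you first obtain $W^{2,p}$ bounds from $L^\infty$ control and then bootstrap; both are equivalent here.
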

\begin{proof}
Let $m\in \mathbb{N}$ be such that $1/m\in(0,d_0)$ and independent of $n$.
By virtue of  (\ref{eqL}),
(\ref{eqinhomog}), (\ref{eqcontraNorm}), Proposition \ref{proexpdec}, (\ref{eqremexpdecay1}), (\ref{eqremexpdecay2}) and what follows in Remark \ref{remGradExpDec}, we have\[-\Delta \varphi_i-f_u(U_i,x)\varphi_i=\mathcal{O}_{C^1}(\epsilon^\alpha)\ \textrm{in}\ \Omega_i\setminus \Gamma(1/m);\ \varphi_i=0\ \textrm{on}\ \partial \Omega_i\setminus \Gamma, \]
\[-\Delta \varphi_j-f_u(U_j,x)\varphi_j=\mathcal{O}_{C^1}(\epsilon^\infty)\ \textrm{in}\ \Omega_i\setminus \Gamma(1/m);\ \varphi_j=0\ \textrm{on}\ \partial \Omega_i\setminus \Gamma,\ j\neq i, \]
where $\alpha \in (0,1)$ as in (\ref{eqweight}). Next, taking into account the assumptions in  Subsection \ref{subsecAnsatz} and since $f$ is at least twice continuously  differentiable, we obtain
\begin{equation}\label{eqiout}-\Delta \varphi_i-f_u(w_i,x)\varphi_i=\mathcal{O}_{C^1}(\epsilon^\alpha)\ \textrm{in}\ \Omega_i\setminus \Gamma(1/m);\ \varphi_i=0\ \textrm{on}\ \partial \Omega_i\setminus \Gamma, \end{equation}
\[-\Delta \varphi_j-f_u(0,x)\varphi_j=\mathcal{O}_{C^1}(\epsilon^\infty)\ \textrm{in}\ \Omega_i\setminus \Gamma(1/m);\ \varphi_j=0\ \textrm{on}\ \partial \Omega_i\setminus \Gamma,\ j\neq i. \]

By standard elliptic estimates, we infer that \[\varphi_1,\varphi_2\  \textrm{are bounded in}\ C^{2,\beta}\left(\Omega\setminus \Gamma\left(1/(m+1)\right)\right),\] uniformly with respect to $n\gg1$, for any $\beta \in (0,1)$.
A standard diagonal compactness argument then yields that as $n\to \infty$, possibly along a subsequence, $\varphi_1,\varphi_2$ have a limit in $C_{loc}^{2,\beta}\left(\Omega\setminus \Gamma\right)$ for any $\beta \in (0,1)$. The assertion of the proposition now follows readily by passing to the limit in (\ref{eqiout}) and recalling Proposition \ref{proexpdec}.  \end{proof}The above proposition yields merely that $\varphi_\infty\in C^{2+\beta}(\Omega \setminus \Gamma)$ for any $\beta\in (0,1)$. However, we will need to show that
each $\mathds{1}_{\Omega_i}\varphi_\infty$ is smooth up to $\Gamma$. To this end, we first make the following easy observation.
\begin{lem}\label{lemtangDiffGhost}
We have
\[
|\varphi_\infty|+|\nabla_\Gamma\varphi_\infty|+|\nabla^2_\Gamma\varphi_\infty|\leq C \  \textrm{in}\  \Gamma(d_0)\setminus \Gamma.
\]
\end{lem}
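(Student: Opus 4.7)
The plan is to split $\Gamma(d_0)\setminus\Gamma$ into an outer shell $\Gamma(d_0)\setminus\Gamma(d)$ and a collar $\Gamma(d)\setminus\Gamma$, and on each piece transfer the appropriate uniform bound from $\|\varphi_n\|_0=1$ to the limit $\varphi_\infty$ using the convergence established in Proposition \ref{proOutC2}.

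On the outer shell, which is compactly contained in $\Omega\setminus\Gamma$, I would invoke the $C^2$ piece $\|\varphi_n\|_{C^2(\Omega\setminus\Gamma(d))}$ of the norm $\|\varphi_n\|_0$ (see (\ref{eqNorm0})), combined with the $C^{2,\beta}_{loc}(\Omega\setminus\Gamma)$ convergence of $\varphi_{i,n}$ to $\mathds{1}_{\Omega_i}\varphi_\infty$ supplied by Proposition \ref{proOutC2}. Passing the uniform $C^2$ bound to the limit immediately yields $|\varphi_\infty|+|\nabla_\Gamma\varphi_\infty|+|\nabla^2_\Gamma\varphi_\infty|\leq C$ on this shell.

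On the collar $\Gamma(d)\setminus\Gamma$, the three terms $\|\varphi_n\|_{L^\infty(\Omega)}$, $\|\nabla_\Gamma\varphi_n\|_{L^\infty(\Gamma(d))}$ and $\sum_\sigma\|\partial_\sigma(\nabla_\Gamma\varphi_n)\|_{L^\infty(\Gamma(d))}$ appearing in $\|\varphi_n\|_0$ are each bounded by $1$ uniformly in $n$. For any fixed $x\in\Gamma(d)\setminus\Gamma$, one has $x\in\Omega_i$ for some $i$, so Proposition \ref{proOutC2} furnishes $C^{2,\beta}$ convergence of $\varphi_{i,n}$ to $\varphi_\infty$ on a small neighborhood of $x$ inside $\Omega_i$. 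Since $\nabla_\Gamma=\nabla-\nabla r(\nabla r\cdot\nabla)$ is a first-order differential operator with smooth coefficients on $\Gamma(2d_0)$, this local $C^{2,\beta}$ convergence implies pointwise convergence $\nabla_\Gamma\varphi_{i,n}(x)\to\nabla_\Gamma\varphi_\infty(x)$ and $\partial_\sigma(\nabla_\Gamma\varphi_{i,n})(x)\to\partial_\sigma(\nabla_\Gamma\varphi_\infty)(x)$ for each tangential direction $\sigma$. Taking $n\to\infty$ in the uniform bounds then gives the claimed estimate at $x$ with a constant independent of $\mathrm{dist}(x,\Gamma)$.

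I do not foresee any substantive obstacle: the argument is essentially a pairing of each term in the normalization $\|\varphi_n\|_0=1$ with the corresponding mode of convergence from Proposition \ref{proOutC2}. The only minor point to watch is that $\nabla_\Gamma$ is defined through the signed-distance coordinates on the tubular neighborhood $\Gamma(2d_0)$, so one must note that its smooth coefficient structure is fully compatible with passing to the limit locally away from $\Gamma$, which it plainly is.
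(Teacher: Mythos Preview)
Your proposal is correct and follows essentially the same approach as the paper's proof: pass the uniform bounds from $\|\varphi_n\|_0=1$ to the limit $\varphi_\infty$ via the $C^{2,\beta}_{loc}(\Omega\setminus\Gamma)$ convergence of Proposition \ref{proOutC2}. The paper's version is slightly more compact---it fixes an arbitrary $x\in\Gamma(d_0)\setminus\Gamma$, notes that the relevant bounds hold on $B_x(\mathrm{dist}(x,\Gamma)/2)$, and passes to the limit there---whereas you split the domain explicitly into the outer shell and the collar; but the substance is identical.
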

\begin{proof}
Let $x\in \Gamma(d_0)\setminus \Gamma$. We know from (\ref{eqcontraNorm}) that
\[
|\varphi_{i,n}|+|\nabla_\Gamma\varphi_{i,n}|+|\nabla^2_\Gamma\varphi_{i,n}|\leq C \  \textrm{in}\ B_x\left(\textrm{dist}(x,\Gamma)/2\right).
\]
The assertion of the lemma now follows at once thanks to Proposition \ref{proOutC2} by letting $n\to \infty$ in the above relation.
\end{proof}\subsection{Connecting the outer to the inner zone in $C^1$ normal to $\Gamma$}\label{subsecexchange}In this subsection, we will 'push' the convergence in Proposition \ref{proOutC2}  through the intermediate zone $\Gamma(d_0)\setminus \Gamma(M\epsilon)$ with $M\gg 1$ independent of $\epsilon$. In this regard, we have the following.\begin{pro}\label{proexchangelemma}
If $x\in \Omega_i$ and $ \textrm{dist}(x,\Gamma) < 2d_0$,  then
\begin{equation}\label{eqC2yakoubian}
\left|\varphi_{i}(x)-\varphi_\infty(x)\right|+\epsilon \left|\left(\varphi_{i}-\varphi_\infty\right)_r\right|=\mathcal{O}(1)e^{-c\textrm{dist}(x,\Gamma)/\epsilon}+o(1),
\end{equation}
where  the  Landau symbols satisfy
\[
\left|\mathcal{O}(1)\right|\leq C\ \textrm{and}\ o(1)\to 0,  \ \textrm{uniformly in}\ x,\ \textrm{as}\ n\to \infty.
\]
\end{pro}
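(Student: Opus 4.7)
By symmetry take $i=1$; the case $i=2$ is analogous. The strategy is a comparison argument for $\psi_n := \varphi_{1,n}-\varphi_\infty$ on $\Omega_1\cap\Gamma(2d_0)$, combined with the $C^{2,\beta}_{\mathrm{loc}}(\Omega\setminus\Gamma)$ convergence of Proposition \ref{proOutC2} and the exponential decay of $\varphi_{2,n}$ from Proposition \ref{proexpdec}. Subtracting (\ref{eqlimit}) from the first component of (\ref{eqinhomog}) and using $f_u(w,x)=f_u(w_1,x)$ in $\Omega_1$ (since $f_u(\cdot,x)$ is even by (\ref{eqfodd})) yields
\[
\bigl(-\Delta+\epsilon_n^{-4}U_2^2-f_u(w_1,x)\bigr)\psi_n = R_n \quad \text{in }\Omega_1,
\]
with
\[
R_n = g_{1,n}-\epsilon_n^{-4}U_2^2\,\varphi_\infty-2\epsilon_n^{-4}U_1U_2\,\varphi_{2,n}+\bigl[f_u(U_1,x)-f_u(w_1,x)\bigr]\varphi_{1,n}.
\]
Using the Gaussian decay of $V_2(z)$ at $z\to+\infty$ from (\ref{eqBerestAsympt}), together with the ansatz bounds (\ref{eqinner})--(\ref{eqQ2}) and (\ref{eqoutdef1}), Proposition \ref{proexpdec}, Lemma \ref{lemtangDiffGhost}, and the decay of $g_{1,n}$ in the weighted norm (\ref{eqweight}) inferred from $\epsilon_n\|g_n\|_1\to 0$, one verifies $\epsilon_n^2|R_n(x)|\le C(e^{-cr/\epsilon_n}+o(1))$ pointwise, with $c,C$ independent of $n$.

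For the $C^0$ estimate, compare $\psi_n$ with a barrier of the form
\[
\Phi_n(x) := A\,e^{-cr(x)/\epsilon_n}+\eta_n, \qquad x\in\Omega_1\cap\Gamma(d),
\]
where $A$ is taken large, $c>0$ small and strictly below the Gaussian rate of $V_2$, and $\eta_n\to 0$ slowly (so as to dominate the $o(1)$ portion of $\epsilon_n^2 R_n$ and of $\psi_n|_{r=d}$). From (\ref{eqLaplaceFermi1}) one computes $-\epsilon_n^2\Delta\Phi_n = A(c^2+\epsilon_n cH(r,s))e^{-cr/\epsilon_n}$, and using the lower bound $\epsilon_n^{-4}U_2^2\ge c_0\epsilon_n^{-2}V_2^2(z)$ from (\ref{eqinner})--(\ref{eqQ2}) the supersolution inequality $(-\Delta+\epsilon_n^{-4}U_2^2-f_u(w_1,x))\Phi_n\ge|R_n|$ holds in the inner zone $\{r\le z_0\epsilon_n\}$ once $A$ is large. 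Boundary comparison: on $\Gamma$, $|\psi_n|\le C$ by $\|\varphi_n\|_0\le 1$ and Lemma \ref{lemtangDiffGhost}, hence $|\psi_n|\le A=\Phi_n|_\Gamma$ for $A\ge C$; on $\{r=d\}\cap\Omega_1$, Proposition \ref{proOutC2} yields $|\psi_n|=o(1)\le\eta_n\le\Phi_n$; and $\psi_n=0$ on $\partial\Omega\cap\partial\Omega_1$. A weak maximum principle applied to $\pm\psi_n-\Phi_n$ (valid after an $e^{-\lambda t(x)}$ substitution making the zero-order coefficient nonnegative) gives $|\psi_n|\le\Phi_n$ in $\Omega_1\cap\Gamma(d)$, which is the $C^0$ part of (\ref{eqC2yakoubian}); in $\Omega_1\setminus\Gamma(d)$, Proposition \ref{proOutC2} produces $|\psi_n|=o(1)$ directly.

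The normal derivative bound follows by $\epsilon_n$-rescaling. At $x_0\in\Omega_1\cap\Gamma(2d_0)$ with $r_0=\mathrm{dist}(x_0,\Gamma)$, the function $\tilde\psi_n(y):=\psi_n(x_0+\epsilon_n y)$ on the unit ball satisfies a uniformly elliptic equation (after multiplying by $\epsilon_n^2$, noting $\epsilon_n^{-2}U_2^2=O(1)$ near $\Gamma$ and $=O(e^{-cr_0/\epsilon_n})$ away from it) with right-hand side of size $C(e^{-cr_0/\epsilon_n}+o(1))$, and $\|\tilde\psi_n\|_\infty\le C(e^{-cr_0/\epsilon_n}+o(1))$ by the just-proven $C^0$ bound. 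Interior Schauder estimates (boundary Schauder for points within $\epsilon_n$ of $\partial\Omega\cap\partial\Omega_1$, where $\tilde\psi_n$ vanishes) then yield $|\nabla\tilde\psi_n(0)|\le C(e^{-cr_0/\epsilon_n}+o(1))$; reversing the scaling produces the claimed bound on $\epsilon_n|(\psi_n)_r(x_0)|$.

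\textbf{Main obstacle.} The delicate step is making the barrier $\Phi_n$ a supersolution uniformly throughout $\Omega_1\cap\Gamma(d)$: a pure exponential $Ae^{-cr/\epsilon_n}$ is \emph{not} globally sufficient because in the intermediate zone $\{z_0\epsilon_n\le r\le d\}$, the potential $\epsilon_n^{-4}U_2^2$ is superexponentially small and cannot balance the $-(c/\epsilon_n)^2$ piece of $-\Delta\Phi_n$. This is resolved by splitting $\Omega_1\cap\Gamma(d)$ into an inner piece (where the barrier above works by the dominance of the potential) and an intermediate piece (where the equation effectively reduces to $-\Delta\psi_n-f_u(w_1,\cdot)\psi_n=o(1)$ and one argues via the boundary data obtained from Proposition \ref{proOutC2} on $\{r=d\}$, a diagonal-extraction uniform convergence on $\{r\ge\delta_n\}$ with $\delta_n/\epsilon_n\to\infty$, and matching with Corollary \ref{corin1} at the inner interface), or alternatively by exploiting the sharper Gaussian rate of $A-V_1'(z)$ from the structure of (\ref{eqBerest})--(\ref{eqBerestAsympt}) to build a barrier whose rate beats $V_2^2(z)$. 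Sign changes of $\psi_n$ are accommodated by applying the comparison to $\pm\psi_n$ separately, and the uniformity of all constants across these regions is the main bookkeeping burden.
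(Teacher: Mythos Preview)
Your approach via a maximum-principle barrier is genuinely different from the paper's, but the ``main obstacle'' paragraph is not a resolution---it is an admission that the argument is incomplete. The barrier $\Phi_n=Ae^{-cr/\epsilon_n}+\eta_n$ fails to be a supersolution in the intermediate zone $\{z_0\epsilon_n\le r\le d\}$ for exactly the reason you state, and neither of the suggested fixes is carried out. (There is also a sign error: in the coordinates of (\ref{eqLaplaceFermi1}) one has $-\epsilon_n^2\Delta(e^{-cr/\epsilon_n})=-(c^2+\epsilon_n cH)e^{-cr/\epsilon_n}$, so the potential term must overcome a \emph{negative} contribution, which only sharpens the difficulty.) The rescaled Schauder step for the gradient is also problematic near $\Gamma$: at this stage of the paper one does not yet control $\partial_{rr}\varphi_\infty$ uniformly as $r\to 0^+$ (that comes later, in Corollary~\ref{corsecondrr}), so rescaling the equation for $\psi_n=\varphi_{1,n}-\varphi_\infty$ around points with $r_0\sim\epsilon_n$ is not justified.

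The paper avoids all of this by a direct one-dimensional integration in the normal variable. Writing the Laplacian as in (\ref{eqdiffbmg2}) and testing the equation for $\psi_n$ against $J(r,s)$ on the interval $(r,2d_0)$, one only needs \emph{$L^1$ control in $r$} of the right-hand side terms: the exponentially decaying piece $\mathcal{O}(\epsilon^{-2})e^{-cr/\epsilon}$ integrates to $\mathcal{O}(\epsilon^{-1})e^{-cr/\epsilon}$, the term $f_u(U_1,\cdot)\varphi_1-f_u(w_1,\cdot)\varphi_\infty$ has $o(1)$ $L^1$-norm by dominated convergence and Proposition~\ref{proOutC2}, $\Delta_{\Gamma_r}\psi$ is handled via Lemma~\ref{lemtangDiffGhost}, and $g_1$ has small $L^1$-norm by the weight (\ref{eqweight}). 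This yields $\partial_r\psi_n=\mathcal{O}(\epsilon^{-1})e^{-cr/\epsilon}+o(1)$ directly (the boundary term at $r=2d_0$ vanishes by Proposition~\ref{proOutC2}), and a second integration gives the $C^0$ bound. No barrier, no zone-splitting, and no a priori regularity of $\varphi_\infty$ beyond what is already available.
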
\begin{proof}For convenience, we will only provide the proof in the case $i=1$ (the case $i=2$ can be treated completely analogously).

Let
\[
\psi=\varphi_1-\varphi_\infty,\ x\in \Omega_1.
\]
In light of (\ref{eqL}), (\ref{eqinhomog}), (\ref{eqcontraNorm}), (\ref{eqremexpdecay1}) and (\ref{eqremexpdecay2}), we have
\begin{equation}\label{eqdiffbmg1}
-\Delta \psi= \mathcal{O}(\epsilon^{-2})e^{-c\textrm{dist}(x,\Gamma)/\epsilon}+ f_u(U_1,x)\varphi_1-f_u(w_1,x)\varphi_\infty+g_1,\ x\in \Omega_1.
\end{equation}

According to the notation from Subsection \ref{subsectfermi}, and (\ref{eqLaplaceFermi1}), we can write
\begin{equation}\label{eqdiffbmg2}
\Delta \psi= J^{-1}\frac{\partial}{\partial r}\left(J\frac{\partial \psi}{\partial r}\right)+\Delta_{\Gamma_r}\psi,
\end{equation}
where
\begin{equation}\label{eqjacDef}
J(r,s)=\textrm{det}\frac{\partial \tau^{-1}(r,s)}{\partial (r,s)}
\end{equation}
denotes the Jacobian of the mapping $\tau^{-1}$ (see also \cite[pg. 1380]{xinfu}). For future reference, we note that \begin{equation}\label{eqJ=1}J(0,s)=1.\end{equation}

By Proposition \ref{proOutC2},  we obtain
\begin{equation}\label{eqdiffbmg3}
  \|f_u(U_1,x)\varphi_1-f_u(w_1,x)\varphi_\infty \|_{L^1(0,2d_0)}=o(1),\ \textrm{uniformly for}\ s\in \Gamma,\ \textrm{as}\ n\to \infty.
\end{equation}
In fact, we need to split the above integral in $(0,\delta)$ and $(\delta, 2d_0)$, first choose $\delta$ sufficiently small  but indepent of $n$ to control the first integral and then let $n\to \infty$ (recall also that
 $U_1-w_1=\mathcal{O}(\epsilon)$ from Subsection \ref{subsecAnsatz}, $|\varphi_1|,\ |\varphi_2|\leq 1$ and that $f_u$ is smooth).
Similarly, from (\ref{eqLaplaceFermi2}), (\ref{eqLaplaceFermi3}),  Proposition \ref{proOutC2} and Lemma \ref{lemtangDiffGhost}, we find
\begin{equation}\label{eqdiffbmg4}
  \|\Delta_{\Gamma_r}\psi\|_{L^1(0,2d_0)}=o(1),\ \textrm{uniformly for}\ s\in \Gamma,\ \textrm{as}\ n\to \infty.
\end{equation}
Moreover, owing to (\ref{eqnorm1}) and (\ref{eqcontraNorm}), we get
\begin{equation}\label{eqdiffbmg5}
\|g_1\|_{L^1(0,2d_0)}=o(1)\int_{0}^{2d_0}\frac{\epsilon^{-1}}{1+(\epsilon^{-1}r)^{1+\alpha}}dr=o(1)\int_{0}^{2d_0/\epsilon}\frac{1}{1+r^{1+\alpha}}dr=o(1),
\end{equation}uniformly for $s\in \Gamma$, as $n\to \infty$.
Hence, testing  (\ref{eqdiffbmg1}) with $J$ in $(r,2d_0)$, and using (\ref{eqdiffbmg2}), (\ref{eqdiffbmg3}), (\ref{eqdiffbmg4}) and (\ref{eqdiffbmg5}), we infer that
\[
  J(r,s)\frac{\partial \psi(r,s)}{\partial r}-J(2d_0,s)\frac{\partial \psi(2d_0,s)}{\partial r}=\mathcal{O}(\epsilon^{-1})e^{-cr/\epsilon}+o(1),
\]
uniformly for $s\in \Gamma$, as $n\to \infty$. Now,   thanks to Proposition \ref{proOutC2}, we arrive at
\[
\frac{\partial \psi(r,s)}{\partial r}=\mathcal{O}(\epsilon^{-1})e^{-cr/\epsilon}+o(1),
\]
uniformly for $s\in \Gamma$, as $n\to \infty$. This proves the $C^1$ part of (\ref{eqC2yakoubian}). The $C^0$ part now follows at once by integrating the above relation in $(r,2d_0)$ and making again use of Proposition \ref{proOutC2}.\end{proof}

\subsection{Smoothness of $\mathds{1}_{\Omega_1}\varphi_\infty-\mathds{1}_{\Omega_2}\varphi_\infty$ across $\Gamma$ and establishing $\varphi_{\infty}\equiv 0$}\label{subsecreflection} In this subsection, working in the vicinity of $\Gamma$,  we will first derive  'approximate reflection laws'  between $\mathds{1}_{\Omega_1}\varphi_{\infty}$ and $\mathds{1}_{\Omega_2}\varphi_{\infty}$ in the direction orthogonal to $\Gamma$. First, these will imply that each $\mathds{1}_{\Omega_i}\varphi_\infty$ can be extended smoothly up to $\Gamma$. Then, we will show that  the difference $\mathds{1}_{\Omega_1}\varphi_\infty-\mathds{1}_{\Omega_2}\varphi_\infty$ is a classical solution to   (\ref{ewlinearFul}), and thus has to be identically equal to zero by the nondegeneracy assumption from Definition \ref{defNonDeg}.  To this end, we will adapt arguments from the proof of \cite[Prop. 4.1]{casteras}.

The main effort in this subsection will be placed in establishing the following result.
\begin{pro}\label{proreflectt}There exists a constant $C>0$ such that
\begin{equation}\label{eqrefl1}
\left|J(-\delta_2,s)\partial_r\varphi_\infty(-\delta_2,s)  + J(\delta_1,s)\partial_r\varphi_\infty(\delta_1,s)\right|\leq C(\delta_1+\delta_2)
\end{equation}
and
\begin{equation}\label{eqrefl2}\begin{split}
                                  \left|J(-\delta_2,s)\left(\varphi_\infty(-\delta_2,s)-\partial_r\varphi_\infty(-\delta_2,s)\delta_2\right)+J(\delta_1,s)\left(\varphi_\infty(\delta_1,s)-\partial_r\varphi_\infty(\delta_1,s)\delta_1\right)\right| \ \                              \\
                                    \leq C(\delta_1+\delta_2),
                               \end{split}
\end{equation}
for any $\delta_1,\delta_2\in (0,2d_0)$ and $s\in \Gamma$, where $J$ is as in (\ref{eqjacDef}).
\end{pro}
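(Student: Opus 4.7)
The plan is to derive both estimates via integration along normal slices through $\Gamma$. Using the Laplacian decomposition (\ref{eqLaplaceFermi1}) and the Jacobian $J$ from (\ref{eqjacDef}), subtracting the two components of (\ref{eqinhomog}) reduces the finite-$n$ problem to a one-dimensional identity
\begin{equation*}
\partial_r\bigl[J\partial_r(\varphi_1-\varphi_2)\bigr]=J\bigl\{-\Delta_{\Gamma_r}(\varphi_1-\varphi_2)+\epsilon^{-4}\bigl[U_2^2\varphi_1-U_1^2\varphi_2-2U_1U_2(\varphi_1-\varphi_2)\bigr]-[f_u(U_1,x)\varphi_1-f_u(U_2,x)\varphi_2]-(g_1-g_2)\bigr\}.
\end{equation*}
Multiplying by a test weight $\chi(r)$ and integrating from $-\delta_2$ to $\delta_1$, two integrations by parts produce boundary terms at $r=\pm\delta_i$. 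Taking $\chi\equiv 1$ yields the left-hand side of (\ref{eqrefl1}): by Propositions \ref{proOutC2}, \ref{proexpdec} and Remark \ref{remGradExpDec}, $\partial_r(\varphi_{1,n}-\varphi_{2,n})(\delta_1,s)\to\partial_r\varphi_\infty(\delta_1,s)$ and $\partial_r(\varphi_{1,n}-\varphi_{2,n})(-\delta_2,s)\to-\partial_r\varphi_\infty(-\delta_2,s)$, so the boundary contribution converges to exactly $J(\delta_1)\partial_r\varphi_\infty(\delta_1,s)+J(-\delta_2)\partial_r\varphi_\infty(-\delta_2,s)$. Taking $\chi(r)=-r$ plays the analogous role for (\ref{eqrefl2}); the resulting boundary term is $J(\delta_1)[\varphi_\infty(\delta_1)-\delta_1\partial_r\varphi_\infty(\delta_1)]+J(-\delta_2)[\varphi_\infty(-\delta_2)+\delta_2\partial_r\varphi_\infty(-\delta_2)]$, which differs from the target by the single term $2\delta_2 J(-\delta_2)\partial_r\varphi_\infty(-\delta_2)=O(\delta_2)$ since $\partial_r\varphi_\infty$ is uniformly bounded at interior points.

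On the right-hand side of the identity, most terms pose no difficulty. The tangential and $f_u$ contributions are pointwise $O(1)$ thanks to $\|\varphi\|_0\le 1$ and smoothness of $f$, so they integrate to $O(\delta_1+\delta_2)$; the forcing integral is $o(1)$ via the weighted norm (\ref{eqnorm1})--(\ref{eqweight}) together with $\epsilon_n\|g_n\|_1\to 0$; and $\int(\chi'J)'(\varphi_1-\varphi_2)\,dr$ is similarly $O(\delta_1+\delta_2)$. The real work is the coupling integral. I would split $[-\delta_2,\delta_1]=[-\delta_2,-M\epsilon_n]\cup[-M\epsilon_n,M\epsilon_n]\cup[M\epsilon_n,\delta_1]$ with $M=M(\epsilon_n)\to\infty$ slowly (e.g.\ $M\sim|\ln\epsilon_n|$). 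In the two outer pieces, on the "wrong side" of $\Gamma$ the exponential decay $U_j=O(\epsilon e^{-c\,\mathrm{dist}(\cdot,\Gamma)/\epsilon})$ from (\ref{eqremexpdecay1}) combined with $|\varphi_j|=O(e^{-c\,\mathrm{dist}(\cdot,\Gamma)/\epsilon})$ from Proposition \ref{proexpdec} dominates the $\epsilon^{-4}$ prefactor, so these pieces give $o(1)$.

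In the inner piece I would substitute $U_i\approx\epsilon bV_i(z)$ and, via Lemma \ref{lemBU}, $\varphi_i\approx c_sV_i'(z)$. The leading-order integrand then collapses through the algebraic identity
\begin{equation*}
V_2^2V_1'-V_1^2V_2'-2V_1V_2(V_1'-V_2')=(V_1-V_2)''',
\end{equation*}
which is a direct consequence of $\mathcal{M}(\mathbf{V}')=0$. The resulting inner integral becomes a boundary evaluation of $(V_1-V_2)''$, which is Gaussian-small as $z\to\pm\infty$ since $V_1''=V_1V_2^2$ inherits the decay in (\ref{eqBerestAsympt}); choosing $M^2\gg|\ln\epsilon_n|$ makes it $o(1)$ for $\chi\equiv 1$. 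For $\chi(r)=-r$ an additional integration by parts in $z$, together with the fact that $(V_1-V_2)'\to A$ symmetrically at both $\pm\infty$, yields an even stronger bound.

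The genuinely delicate step, and the main obstacle I anticipate, is the control of subleading errors in the inner analysis: on one hand, the corrections $\epsilon^2H(0,s)W_i(\xi)+Q_i$ to $U_i$ from (\ref{eqinner})--(\ref{eqQ2}), which when multiplied by $\epsilon^{-4}$ are a priori of size $\epsilon^{-1}$ per unit $dr$; on the other hand, the error between $\varphi_i$ and $c_sV_i'$, since Lemma \ref{lemBU} furnishes only $C^1_{\rm loc}$ convergence whereas the inner integration spans the growing window $|z|\le M$. My plan for the first issue is to exploit the fact that the ansatz (\ref{eqinner})--(\ref{eqQ2}) has been specifically designed so that the Fredholm solvability condition against the translation zero mode $\mathbf{V}'$ is satisfied at each order in $\epsilon$, producing further cancellations analogous to $(V_1-V_2)'''$ above. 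For the second, I would split the inner interval once more into a fixed-size core $|z|\le K_0$ (where $C^1_{\rm loc}$ supplies $o_n(1)$) and an intermediate annulus $K_0\le|z|\le M$, on which $\|\varphi\|_0\le 1$ combined with the exponential decay of the wrong-side components absorbs whatever remains.
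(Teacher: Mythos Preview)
Your boundary-term identification is correct, and the scalar identity $(V_1-V_2)'''=V_2^2V_1'-V_1^2V_2'-2V_1V_2(V_1'-V_2')$ is valid. The gap is in the bulk: testing with $\chi\equiv 1$ (or $\chi(r)=-r$) does \emph{not} neutralize the singular coupling, and your proposed fixes leave behind terms of order $\epsilon_n^{-1}$.

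Concretely, on your intermediate annulus $K_0\le z\le M$ (where $r>0$), the term $\epsilon^{-4}U_1^2\varphi_2$ is bounded only by $C\epsilon^{-2}(1+z^2)e^{-cz}$ via Proposition~\ref{proexpdec} and (\ref{eqpot1}); integrated in $dr=\epsilon b^{-1}dz$ this gives $C\epsilon^{-1}\int_{K_0}^\infty(1+z^2)e^{-cz}\,dz$, a quantity of order $\epsilon^{-1}$ for any fixed $K_0$. On the core $|z|\le K_0$, even if you split $\varphi_i=c_sV_i'+(\varphi_i-c_sV_i')$, the ``leading'' piece produces $\epsilon^{-1}bc_s\,[(V_1-V_2)'']_{-K_0}^{K_0}$, still carrying a factor $\epsilon^{-1}$; the ``error'' piece is bounded by $\epsilon^{-1}\cdot O_{K_0}(1)\cdot o_n(1)$. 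Since Lemma~\ref{lemBU} supplies no rate, you cannot balance these against each other by letting $K_0$ depend on $n$. Every slice of your decomposition diverges as $n\to\infty$.

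The paper's remedy is to test the \emph{vector} equation against the (approximate) kernel elements $(\Phi_1,\Phi_2)\approx\mathbf{V}'$ and $(\Psi_1,\Psi_2)\approx z\mathbf{V}'+\mathbf{V}$ of $\tilde{\mathcal{M}}$. After integrating by parts as in (\ref{eqJacparts}), the entire singular potential block lands on the test vector $\Theta$, and one obtains $b^2\epsilon^{-2}\langle J\,\tilde{\mathcal{M}}\Theta,\varphi\rangle$; since $\tilde{\mathcal{M}}\Theta=\epsilon^2\,\mathcal{O}(e^{-cz^2})$ by construction (see (\ref{eqMtilderemain})), this term is $O(\epsilon)$ \emph{without any approximation of $\varphi$}. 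This is the essential mechanism you are missing: the singular coupling is killed by self-adjointness against the kernel, not by replacing $\varphi$ with its blow-up limit. Your scalar weights $\chi=1,\,-r$ are simply not in the kernel of the relevant operator. The only place the paper appeals to $\varphi\approx c_s\mathbf{V}'$ is for a genuinely lower-order term (the $J_r$ line in (\ref{eqhuge})), where a fixed $L^1$ majorant $|V_i''|$ allows dominated convergence.
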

\begin{proof}In light of (\ref{eqL}), (\ref{eqNorm0}), (\ref{eqcontraNorm}) and (\ref{eqdiffbmg2}), we can write (\ref{eqinhomog}) as
\begin{equation}\label{eqjac1}
  -J^{-1}\frac{\partial}{\partial r}\left(\begin{array}{c}
                                            J\partial_r \varphi_1 \\
                                            J\partial_r \varphi_2
                                          \end{array}
  \right)+\epsilon^{-4}\left(\begin{array}{cc}
                 U_2^2  & 2U_1U_2 \\
                  2U_1U_2 & U_1^2
                \end{array}
   \right)\left(\begin{array}{c}
                                              \varphi_1 \\
                                              \varphi_2
                                          \end{array}
  \right)+\mathcal{O}(1)=\left(\begin{array}{c}
                                              g_1 \\
                                              g_2
                                          \end{array}
  \right),
\end{equation}
uniformly in $\Gamma(2d_0)$, as $n\to \infty$.

By the assumptions from Subsections \ref{subsubΙΝAns}  and \ref{subsubOutAns}, recalling (\ref{eqzetaCoord}), for $x\in\Gamma(2d_0)$ we find
\begin{equation}\label{eqeaster1}\begin{array}{lll}
                                   U_2^2(x)&=&\epsilon^2b^2(s)V_2^2(z)+2\epsilon^3b H(0,s) V_2W_2(z)+R_{11}(z,s), \\
                                   U_1^2(x)&=&\epsilon^2b^2(s)V_1^2(z)+2\epsilon^3b H(0,s) V_1W_1(z)+R_{22}(z,s), \\
                                   U_1U_2(x)&=&\epsilon^2b^2(s)V_1V_2(z)+\epsilon^3b H(0,s) (V_1W_2+V_2W_1)+R_{12}(z,s),
                                 \end{array}
\end{equation}
where
\begin{equation}\label{eqeaster2}
\left|R_{22}(z,s)\right|\leq\left\{\begin{array}{ll}
                                     C\epsilon^4(z^4+1) & \textrm{if}\ x\in \Gamma(2d_0)\cap \Omega_1, \\
                                     C\epsilon^4e^{cz}+\mathcal{O}(\epsilon^\infty) & \textrm{if}\ x\in \Gamma(2d_0)\cap \Omega_2,
                                   \end{array}
\right.\ \left|R_{11}(z,s)\right|\leq C \left|R_{22}(-z,s)\right|,
\end{equation}
\begin{equation}\label{eqeaster3}
\left|R_{12}(z,s)\right|\leq
                                     C\epsilon^4e^{-c|z|}+\mathcal{O}(\epsilon^\infty) \ \textrm{if}\ x\in \Gamma(2d_0).
                        \end{equation}

We consider the perturbed linear operator
\begin{equation}\label{eqMtilde}
\tilde{\mathcal{M}}=\mathcal{M}+2\epsilon b^{-1}(s)H(0,s) \left(\begin{array}{cc}
                                                           V_2W_2 & V_1W_2+V_2W_1 \\
                                                           V_1W_2+V_2W_1 & V_1W_1
                                                         \end{array}
\right)I,
\end{equation}
where $\mathcal{M}$ is as in (\ref{eqM}).
% and
%\begin{equation}\label{eqP}
%  \textbf{P}=\left(\begin{array}{cc}
 %                                                      P_{11} & P_{12} \\
  %                                                     P_{12} & P_{22}
   %                                                  \end{array}
 %\right).
%\end{equation}
By the translation and scaling invariance of (\ref{eqBerest}), described at the end of Subsection \ref{subsectBerest}, we see that the pairs
\[
(V_1',V_2')\ \textrm{and}\ (zV_1'+V_1,zV_2'+V_2)
\]
belong in the kernel of $\mathcal{M}$. It is therefore natural to seek corresponding refined elements in the approximate kernel of $\tilde{\mathcal{M}}$ in the form
\begin{equation}\label{eqkernel}
\begin{array}{c}
  \left(\begin{array}{c}
        \Phi_1 \\
        \Phi_2
      \end{array}
 \right)=\left(\begin{array}{c}
        V_1' \\
        V_2'
      \end{array}
 \right)+2\epsilon b^{-1}(s)H(0,s) \left(\begin{array}{c}
        \hat{\Phi}_1 \\
        \hat{\Phi}_2
      \end{array}
 \right), \\
   \\
  \left(\begin{array}{c}
        \Psi_1 \\
        \Psi_2
      \end{array}
 \right)=\left(\begin{array}{c}
        zV_1'+V_1 \\
        zV_2'+V_2
      \end{array}
 \right)+2\epsilon b^{-1}(s)H(0,s) \left(\begin{array}{c}
        \hat{\Psi}_1 \\
        \hat{\Psi}_2
      \end{array}
 \right).
\end{array}
\end{equation}
 We make the choices
\begin{equation}\label{eqapp3}
 \mathcal{M}\left(\begin{array}{c}
        \hat{\Phi}_1 \\
        \hat{\Phi}_2
      \end{array}
 \right)=-  \left(\begin{array}{cc}
                                                           V_2W_2 & V_1W_2+V_2W_1 \\
                                                           V_1W_2+V_2W_1 & V_1W_1
                                                         \end{array}
\right)\left(\begin{array}{c}
        V_1' \\
        V_2'
      \end{array}
 \right),\ z\in \mathbb{R},
\end{equation}
and
\begin{equation}\label{eqapp4}
\mathcal{M}\left(\begin{array}{c}
        \hat{\Psi}_1 \\
        \hat{\Psi}_2
      \end{array}
 \right)=-  \left(\begin{array}{cc}
                                                           V_2W_2 & V_1W_2+V_2W_1 \\
                                                           V_1W_2+V_2W_1 & V_1W_1
                                                         \end{array}
\right)\left(\begin{array}{c}
        zV_1'+V_1 \\
        zV_2'+V_2
      \end{array}
 \right),\ z\in \mathbb{R}.
\end{equation}
We note that the righthand sides of (\ref{eqapp3}) and (\ref{eqapp4}) converge to zero super-exponentially fast as $|z|\to \infty$. In fact, thanks to the symmetry properties of $\textbf{V}$ and $\textbf{W}$ (recall the discussion leading to (\ref{eqBerestAsympt}), and (\ref{eqWdiaz2})),  we find that the righthand side of (\ref{eqapp3}) enjoys the same symmetry property as $\textbf{V}$  while that of (\ref{eqapp4}) satisfies the same symmetry property as $\textbf{W}$. Hence, by \cite[Lem. 2.4]{aftalionSour}, we infer that (\ref{eqapp3})   admits a solution such that
\begin{equation}\label{eqappass1}
\left(\hat{\Phi}_1(-z),\hat{\Phi}_2(-z) \right)=\left(\hat{\Phi}_2(z),\hat{\Phi}_1(z) \right),
\
  \left(\hat{\Phi}_1(z),\hat{\Phi}_2(z) \right)- (a,0)=\mathcal{O}(e^{-Dz}), \   z>0,
\end{equation}
for some $a\in \mathbb{R}$ and all $D>0$.
Similarly, system (\ref{eqapp4}) has a solution such that
\begin{equation}\label{eqappass2}
\left(\hat{\Psi}_1(-z),\hat{\Psi}_2(-z) \right)=-\left(\hat{\Psi}_2(z),\hat{\Psi}_1(z) \right),
\
  \left(\hat{\Psi}_1(z),\hat{\Psi}_2(z) \right)- (bz,0)=\mathcal{O}(e^{-Dz}), \   z>0,
\end{equation}
for some $b\in \mathbb{R}$ and all $D>0$. We point out that the estimates in  the above two relations can be differentiated in the natural way
and that $\hat{\Phi}_i$, $\hat{\Psi}_i$ are independent of both $\epsilon$ and $s$.
It follows readily from (\ref{eqMtilde})-(\ref{eqappass2}) and the asymptotic behaviour of the functions involved that
\begin{equation}\label{eqMtilderemain}
\begin{array}{rcl}
 \tilde{\mathcal{M}}\left(\begin{array}{c}
        \Theta_1 \\
         \Theta_2
      \end{array}
 \right) & = & 4\epsilon^2 b^{-2}(s) H^2(0,s) \left(\begin{array}{cc}
                                                           V_2W_2 & V_1W_2+V_2W_1 \\
                                                           V_1W_2+V_2W_1 & V_1W_1
                                                         \end{array}
\right)\left(\begin{array}{c}
        \hat{\Theta}_1 \\
        \hat{\Theta}_2
      \end{array}
 \right) \\
   &  &  \\
   &=  &  \epsilon^2  \left(\begin{array}{c}
                                                           \mathcal{O}(e^{-cz^2}) \\
                                                           \mathcal{O}(e^{-cz^2})
                                                         \end{array}
\right),\ z\in \mathbb{R},
\   \left(\begin{array}{c}
          \Theta_1 \\
          \Theta_2
        \end{array}
\right)=\left(\begin{array}{c}
                {\Phi}_1 \\
                {\Phi}_2
              \end{array}
\right)\ \textrm{or}\ \left(\begin{array}{c}
                              {\Psi}_1 \\
                              {\Psi}_2
                            \end{array}
\right).
\end{array}
\end{equation}

We let $\delta_1,\ \delta_2\in (0,2d_0)$ and $s\in \Gamma$
be arbitrary but independent of $n$ and denote
\begin{equation}\label{eqz1z2}
  z_-(s)=\epsilon^{-1}b(s) \left(-\delta_2 - \epsilon \zeta(s)\right),\ z_+(s)=\epsilon^{-1}b(s) \left(\delta_1 - \epsilon \zeta(s)\right).
\end{equation}

We will need the following preliminary observation which is valid for  $\phi\in C^2(-2d_0,2d_0)$ and $\Theta_i$ as in (\ref{eqMtilderemain}). Integrating by parts, and taking into account (\ref{eqphiStretch}) and what follows, yields
\begin{equation}\label{eqJacparts}\begin{array}{l}
                                    -\int_{-\delta_2}^{\delta_1}  \frac{\partial}{\partial r}\left(J\frac{\partial}{\partial r} \phi(r)\right)\Theta_i(z)dr= \\ \\
                                    -\epsilon^{-2}b^2\int_{-\delta_2}^{\delta_1}
\frac{\partial}{\partial z}\left[J\left(\epsilon \zeta(s)+\epsilon b^{-1}(s)z,s\right)\frac{\partial}{\partial z}\Theta_i(z) \right]\phi(r)dr
 \\ \\
                                    +\epsilon^{-1}b\left(J(\delta_1,s)\phi(\delta_1)\partial_z\Theta_i(z_+)-J(-\delta_2,s)\phi(-\delta_2)\partial_z\Theta_i(z_-) \right)
 \\ \\
                                    -\left(J(\delta_1,s)\partial_r\phi(\delta_1)\Theta_i(z_+)-J(-\delta_2,s)\partial_r\phi(-\delta_2)\Theta_i(z_-) \right).
                                  \end{array}
\end{equation}

Testing (\ref{eqjac1}) by $J(r,s)\left(\Theta_1(z),\Theta_2(z)\right)$, and making use of (\ref{eqeaster1}), (\ref{eqMtilde}), (\ref{eqz1z2}) and (\ref{eqJacparts}), we get
\begin{equation}\label{eqhuge}
\begin{array}{c}
  b^2(s)\epsilon^{-2}\left<J(r,s)\tilde{\mathcal{M}}\left(\begin{array}{c}
                \Theta_1(z) \\
                \Theta_2(z)
              \end{array}
\right), \left(\begin{array}{c}
                \varphi_1(r,s) \\
                \varphi_2(r,s)
              \end{array}
\right)\right>_{L^2\times L^2(-\delta_2,\delta_1)}
 \\
   \\
  -\epsilon^{-1}b\int_{-\delta_2}^{\delta_1}J_r\left(\epsilon \zeta(s)+\epsilon b^{-1}(s)z,s\right)
\left(\frac{\partial}{\partial z}\Theta_1(z)\varphi_1(r,s)+\frac{\partial}{\partial z}\Theta_2(z)\varphi_2(r,s) \right)dr
 \\
   \\
  +\epsilon^{-1}b\sum_{i=1}^{2}\left(J(\delta_1,s)\varphi_i(\delta_1,s)\partial_z\Theta_i(z_+)-J(-\delta_2,s)\varphi_i(-\delta_2,s)\partial_z\Theta_i(z_-) \right)
 \\
   \\
                                    -\sum_{i=1}^{2}\left(J(\delta_1,s)\partial_r\varphi_i(\delta_1,s)\Theta_i(z_+)-J(-\delta_2,s)\partial_r\varphi_i(-\delta_2,s)\Theta_i(z_-) \right) \\
   \\
  + \epsilon^{-4}\left<J\left(\begin{array}{cc}
                                                           R_{11}(z,s) & R_{12}(z,s) \\
                                                           R_{12}(z,s) & R_{22}(z,s)
                                                         \end{array}
\right)\left(\begin{array}{c}
                \Theta_1(z) \\
                \Theta_2(z)
              \end{array}
\right), \left(\begin{array}{c}
                \varphi_1(r,s) \\
                \varphi_2(r,s)
              \end{array}
\right)\right>_{L^2\times L^2(-\delta_2,\delta_1)}
 \\
   \\
  +\mathcal{O}(1)\int_{-\delta_2}^{\delta_1}\left(|\Theta_1(z)|+|\Theta_2(z)| \right)dr
 \\
   \\
  =\int_{-\delta_2}^{\delta_1}\left(g_1(r,s)\Theta_1(z)+g_2(r,s)\Theta_2(z) \right)J(r,s)dr.
\end{array}
\end{equation}

By (\ref{eqcontraNorm}) and (\ref{eqMtilderemain}), we find that the term in the first line of the above relation satisfies
\begin{equation}\label{eqline1}\begin{array}{rcl}
                                  b^2(s)\epsilon^{-2}\left|\left<J(r,s)\tilde{\mathcal{M}}\left(\begin{array}{c}
                \Theta_1(z) \\
                \Theta_2(z)
              \end{array}
\right), \left(\begin{array}{c}
                \varphi_1(r,s) \\
                \varphi_2(r,s)
              \end{array}
\right)\right>_{L^2\times L^2(-\delta_2,\delta_1)}\right| & \leq &C \int_{-\delta_2}^{\delta_1}e^{-c|z|}dr \\
                                  &  &  \\
                                  & \stackrel{(\ref{eqzetaCoord})}{\leq} & C \int_{-\delta_2}^{\delta_1}e^{-c|r|/\epsilon}dr \\
                                  &  &  \\
                                  & \leq & C\epsilon,
                               \end{array}
\end{equation}
where throughout this subsection, unless specified otherwise, the generic constants $c,C>0$ are independent of $n$, $\delta_1$, $\delta_2$ and $s\in \Gamma$. Concerning the fifth line in (\ref{eqhuge}), thanks to (\ref{eqcontraNorm}),  (\ref{eqeaster2}), (\ref{eqeaster3}), (\ref{eqappass1}) and (\ref{eqappass2}), working as above, we have
\begin{equation}\label{eqline2}
\epsilon^{-4}\left|\left<J\left(\begin{array}{cc}
                                                           R_{11}(z,s) & R_{12}(z,s) \\
                                                           R_{12}(z,s) & R_{22}(z,s)
                                                         \end{array}
\right)\left(\begin{array}{c}
                \Theta_1(z) \\
                \Theta_2(z)
              \end{array}
\right), \left(\begin{array}{c}
                \varphi_1(r,s) \\
                \varphi_2(r,s)
              \end{array}
\right)\right>_{L^2\times L^2(-\delta_2,\delta_1)}\right|\leq C\epsilon.
\end{equation}
We will estimate the remaining terms in (\ref{eqhuge}) separately for $(\Theta_1,\Theta_2)=(\Phi_1,\Phi_2)$ and
$(\Theta_1,\Theta_2)=({\Psi}_1,{\Psi}_2)$.

\underline{\textbf{$(\Theta_1,\Theta_2)=(\Phi_1,\Phi_2)$}} Concerning the second line in (\ref{eqhuge}) with $(\Theta_1,\Theta_2)=(\Phi_1,\Phi_2)$, in light of (\ref{eqkernel}) and (\ref{eqappass1}), arguing as in (\ref{eqline1}), we can write
\[
  \begin{split}
     \int_{-\delta_2}^{\delta_1}J_r\left(\epsilon \zeta(s)+\epsilon b^{-1}(s)z,s\right)
\left(\Phi_1'(z)\varphi_1(r,s)+\Phi_2'(z)\varphi_2(r,s) \right)dr=\ \ \ \ \ \ \ \ \ \ \ \ \  \ \ \ \ \ \ \ \ \ \ \ \ \ \ \ \ \ \ \ \ \ \ \ \ \ \ \ \ \ \ \ \   \\
\epsilon b^{-1}(s)          \int_{z_-(s)}^{z_+(s)}J_r\left(\epsilon \zeta(s)+\epsilon b^{-1}(s)z,s\right)
\left[\sum_{i=1}^{2}V_i''(z)\varphi_i(\epsilon \zeta+\epsilon b^{-1}z,s)\right]dz+\mathcal{O}(\epsilon^2).\ \ \ \ \ \ \ \ \ \ \ \ \ \ \ \ \ \ \ \  \ \ \ \
 \end{split}
\]
Using once more that $V_i''$ decays super-exponentially fast to zero as $|z|\to \infty$ and $|\varphi_i|\leq 1$, $i=1,2$, by virtue of Lemma \ref{lemBU}   and Lebesgue's dominated convergence theorem, for each $s\in \Gamma$, there exists a subsequence $\epsilon_{n,s}$ of $\epsilon_n$ such that
\[\begin{split}
                                                \int_{z_-}^{z_+}J_r\left(\epsilon_{n,s} \zeta+\epsilon_{n,s} b^{-1}z,s\right)
\left[\sum_{i=1}^{2}V_i''(z)\varphi_i(\epsilon_{n,s} \zeta+\epsilon_{n,s} b^{-1}z,s)\right]dz\\
                                                  \to c_sJ_r(0,s)\int_{-\infty}^{\infty}\textbf{V}''\cdot \textbf{V}'dz=0.
                                             \end{split}
\]
Hence, by combining the above two relations, we arrive at
\begin{equation}\label{eqline3}
\epsilon_{n,s}^{-1}b  \int_{-\delta_2}^{\delta_1}J_r\left(\epsilon_{n,s} \zeta+\epsilon_{n,s} b^{-1}z,s\right)
\left(\Phi_1'(z)\varphi_1(r,s)+\Phi_2'(z)\varphi_2(r,s) \right)dr\to 0\ \textrm{as}\ \epsilon_{n,s}\to 0.
\end{equation}
Then, owing to (\ref{eqBerestAsympt}), Proposition \ref{proOutC2}, (\ref{eqkernel}) and (\ref{eqappass1}), along the original sequence $\epsilon_n$, the terms in the third and fourth line of (\ref{eqhuge}) satisfy
\begin{equation}\label{eqline4}
\begin{array}{c}
  \epsilon^{-1}b\sum_{i=1}^{2}\left(J(\delta_1,s)\varphi_i(\delta_1,s)\Phi_i'(z_+)-J(-\delta_2,s)\varphi_i(-\delta_2,s)\Phi_i'(z_-) \right) \\
   \\
  -\sum_{i=1}^{2}\left(J(\delta_1,s)\partial_r\varphi_i(\delta_1,s)\Phi_i(z_+)-J(-\delta_2,s)\partial_r\varphi_i(-\delta_2,s)\Phi_i(z_-) \right) \\
   \\
  \to -A J(\delta_1,s)\partial_r\varphi_\infty(\delta_1,s)-A J(-\delta_2,s)\partial_r\varphi_\infty(-\delta_2,s).
\end{array}
\end{equation}
For the term in the sixth line of (\ref{eqhuge}), we plainly note that
\begin{equation}\label{eqline5}
\left|\mathcal{O}(1)\int_{-\delta_2}^{\delta_1}\left(|\Phi_1(z)|+|\Phi_2(z)| \right)dr\right|\leq C(\delta_1+\delta_2).
\end{equation}
The  last term in (\ref{eqhuge}) can be bounded as in (\ref{eqdiffbmg5}), i.e.,
\begin{equation}\label{eqline6}
\int_{-\delta_2}^{\delta_1}\left(g_1(r,s)\Phi_1(z)+g_2(r,s)\Phi_2(z) \right)J(r,s)dr\to 0\ \textrm{as}\ \epsilon_n\to 0.
\end{equation}
The desired estimate (\ref{eqrefl1}) now follows by letting $\epsilon_{n,s}\to 0$ in (\ref{eqhuge}), making use of (\ref{eqline1})-(\ref{eqline6}) (recall that $A>0$).

\underline{\textbf{$(\Theta_1,\Theta_2)=(\Psi_1,\Psi_2)$}} In this case, it turns out that less involved  estimates are needed in order to conclude. Indeed, for the corresponding second line in (\ref{eqhuge}) we just observe that
\[
  \epsilon^{-1}b  \left| \int_{-\delta_2}^{\delta_1}J_r\left(\epsilon \zeta+\epsilon b^{-1}z,s\right)
\left(\Psi_1'(z)\varphi_1(r,s)+\Psi_2'(z)\varphi_2(r,s) \right)dr \right|\leq C\epsilon^{-1}(\delta_1+\delta_2).
\]
Next, by (\ref{eqBerestAsympt}), Proposition \ref{proOutC2}, (\ref{eqkernel}) and (\ref{eqappass2}), we deduce that
\[
\begin{split}
\sum_{i=1}^{2}\left(   J(\delta_1,s)\varphi_i(\delta_1,s)\Psi_i'(z_+)-J(-\delta_2,s)\varphi_i(-\delta_2,s)\Psi_i'(z_-)\right) \to \ \ \ \ \ \ \ \ \ \ \ \ \ \ \ \ \ \ \ \ \ \ \ \ \ \ \ \ \ \  \   \\
  \ \ \ \ \ \ \ \ \ \ \ \ \ \ \ \ \ \ \   2AJ(\delta_1,s)\varphi_\infty(\delta_1,s)+2AJ(-\delta_2,s)\varphi_\infty(-\delta_2,s)\ \ \
\end{split}\]
and
       \[\begin{split}
            -\epsilon\sum_{i=1}^{2}\left(J(\delta_1,s)\partial_r\varphi_i(\delta_1,s)\Psi_i(z_+)-J(-\delta_2,s)\partial_r\varphi_i(-\delta_2,s)\Psi_i(z_-) \right)\to \ \ \ \ \ \ \ \ \ \ \ \ \ \ \ \  \  \\
\ \              -J(\delta_1,s)\partial_r\varphi_\infty(\delta_1,s)2Ab(s)\delta_1
-J(-\delta_2,s)\partial_r\varphi_\infty(-\delta_2,s)2Ab(s)\delta_2.
         \end{split}
\]
Lastly,  as in (\ref{eqline5}) and (\ref{eqline6}), we obtain
\[
\left|\mathcal{O}(1)\int_{-\delta_2}^{\delta_1}\left(|\Psi_1(z)|+|\Psi_2(z)| \right)dr\right|\leq C(\delta_1+\delta_2)\epsilon^{-1},
\]
and
\begin{equation}\label{eqline6}
\int_{-\delta_2}^{\delta_1}\left(g_1(r,s)\Psi_1(z)+g_2(r,s)\Psi_2(z) \right)J(r,s)dr=o(1)\epsilon^{-1}\ \textrm{as}\ \epsilon_n\to 0.
\end{equation}
In light of the above, the sought after estimate (\ref{eqrefl2}) follows readily by multiplying (\ref{eqhuge}) with $\epsilon_n$ and then letting $n\to \infty$.\end{proof}

As a first consequence of Proposition \ref{proreflectt}, we have the following.

\begin{cor}\label{corgradientBound}
There exists a constant $C>0$ such that
\begin{equation}\label{eqgradientboundInf}
|\nabla \varphi_\infty|\leq C\ \textrm{in}\ \Omega\setminus \Gamma,
\end{equation}
and therefore each $\mathds{1}_{\Omega_i}\varphi_\infty$ can be extended continuously up to $\Gamma$. In fact, we have
\begin{equation}\label{eqc0}\mathds{1}_{\Omega_2}\varphi_\infty=-\mathds{1}_{\Omega_1}\varphi_\infty \ \textrm{on}\ \Gamma. \end{equation}
\end{cor}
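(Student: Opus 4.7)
The plan is to deduce both parts from the reflection inequalities (\ref{eqrefl1})--(\ref{eqrefl2}) of Proposition \ref{proreflectt}, using Lemma \ref{lemtangDiffGhost} for the tangential directions together with the continuity of the Jacobian $J$ (in particular (\ref{eqJ=1})).

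To establish (\ref{eqgradientboundInf}), I would first note that by Proposition \ref{proOutC2}, $\varphi_\infty\in C^{2,\beta}_{loc}(\Omega\setminus\Gamma)$, so the only issue is near $\Gamma$. In the tubular neighbourhood $\Gamma(d_0)\setminus\Gamma$, Lemma \ref{lemtangDiffGhost} already controls the tangential derivatives $\nabla_\Gamma\varphi_\infty$, so it suffices to bound the normal derivative $\partial_r\varphi_\infty$. For the $\Omega_1$ side I would freeze $\delta_2=d_0$ in (\ref{eqrefl1}); since $\partial_r\varphi_\infty(-d_0,\cdot)$ is bounded uniformly on the compact manifold $\Gamma$ (by smoothness of $\varphi_\infty$ away from $\Gamma$) and $J(\pm d_0,s)$, $J(\delta_1,s)$ are continuous and bounded away from zero for $\delta_1\in(0,d_0)$, the inequality collapses to
\[
|\partial_r\varphi_\infty(\delta_1,s)|\leq C,\qquad \delta_1\in(0,d_0),\ s\in\Gamma.
\]
A symmetric choice (freezing $\delta_1=d_0$) handles the $\Omega_2$ side and yields the gradient bound.

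For the continuous extension, the gradient bound combined with $|\varphi_\infty|\leq 1$ (inherited from $\|\varphi_n\|_0=1$ via Proposition \ref{proOutC2}) implies that $\varphi_\infty$ is uniformly Lipschitz on each $\Omega_i\cap\Gamma(d_0)$; thus each $\mathds{1}_{\Omega_i}\varphi_\infty$ extends continuously up to $\Gamma$.

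Finally, to obtain (\ref{eqc0}) I would let $\delta_1,\delta_2\to 0^+$ in (\ref{eqrefl2}). By (\ref{eqJ=1}) we have $J(0,s)=1$ and $J$ is continuous; since $\partial_r\varphi_\infty$ is now known to be bounded, the correction terms $\partial_r\varphi_\infty(\pm\delta_i,s)\,\delta_i$ vanish in the limit. Using the continuous extension just established, the left-hand side of (\ref{eqrefl2}) tends to $|\mathds{1}_{\Omega_2}\varphi_\infty(s)+\mathds{1}_{\Omega_1}\varphi_\infty(s)|$ for each $s\in\Gamma$, while the right-hand side tends to zero, giving (\ref{eqc0}).

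The only delicate point I anticipate is confirming that the constants in (\ref{eqrefl1})--(\ref{eqrefl2}) are genuinely uniform in $s\in\Gamma$ (so that both the Lipschitz bound and the pointwise limits above are uniform), but this uniformity is exactly what Proposition \ref{proreflectt} asserts, so the argument should pass through without further difficulty.
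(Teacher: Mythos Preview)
Your proposal is correct and follows essentially the same route as the paper: fix one of $\delta_1,\delta_2$ in (\ref{eqrefl1}) to bound $\partial_r\varphi_\infty$, combine with Lemma \ref{lemtangDiffGhost} for the tangential part, then use the resulting Lipschitz bound to extend continuously and pass to the limit $\delta_1,\delta_2\to 0$ in (\ref{eqrefl2}) via (\ref{eqJ=1}). The paper invokes Arzel\`a--Ascoli for the extension where you invoke the Lipschitz bound directly, but these are equivalent here.
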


\begin{proof}We will provide the proof of (\ref{eqgradientboundInf}) only in $\Omega_1$. From (\ref{eqrefl1}), by fixing a $\delta_2\in (0,2d_0)$ and letting $r=\delta_1\in (0,2d_0)$ free, we obtain
\[\left|\partial_r\varphi_\infty(r,s)\right|\leq C
\]
for some constant $C>0$ that is independent of $(r,s)$ (recall also (\ref{eqJ=1})).
Moreover, we already know from Lemma \ref{lemtangDiffGhost} that \[\left|\nabla_\Gamma\varphi_\infty (r,s)\right|\leq C.\] The gradient estimate (\ref{eqgradientboundInf}) now follows at once.

Armed with  (\ref{eqgradientboundInf}), we can extend continuously each $\mathds{1}_{\Omega_i}\varphi_\infty$ up to $\Gamma$ thanks to the Arzela-Ascoli theorem.
Letting $\delta_1\to 0$ and $\delta_2\to 0$ in (\ref{eqrefl2}), making again use of (\ref{eqJ=1}) and (\ref{eqgradientboundInf}), we infer that (\ref{eqc0}) holds.
 \end{proof}
In turn, we can show the following.

\begin{cor}\label{corsecondrr}
There exists a constant $C>0$ such that
\begin{equation}\label{eqrr}
  |\partial_{rr}\varphi_\infty|\leq C\ \textrm{in}\ \Gamma(2d_0)\setminus \Gamma.
\end{equation}

The normal derivatives of $\mathds{1}_{\Omega_i} \varphi_\infty$, $i=1,2$, exist on $\Gamma$,\begin{equation}\label{eqphinftyrcontlekt}
\textrm{for each}\ s\in \Gamma, \ r\mapsto \partial_r\left(\mathds{1}_{\Omega_i}\varphi_\infty(r,s)\right)
\
\textrm{is continuous up to}\ r=0,\end{equation}
and
\begin{equation}\label{eqc11}
  \partial_r\left(\mathds{1}_{\Omega_2}\varphi_\infty \right)=-\partial_r\left(\mathds{1}_{\Omega_1}\varphi_\infty \right)\ \textrm{on}\ \Gamma.
\end{equation}
\end{cor}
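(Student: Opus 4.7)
The plan is to derive the second normal derivative bound from the limit equation and previously established estimates, and then pass to the limit in the reflection formula of Proposition \ref{proreflectt}. I proceed in three short steps.

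First I would prove (\ref{eqrr}). Rewriting the limit equation (\ref{eqlimit}) in Fermi coordinates via (\ref{eqLaplaceFermi1}) gives
\[
\partial_{rr}\varphi_\infty = -f_u(w,x)\varphi_\infty + H(r,s)\,\partial_r\varphi_\infty - \Delta_{\Gamma_r}\varphi_\infty
\]
in $\Gamma(2d_0)\setminus\Gamma$. Each term on the right-hand side is uniformly bounded: the first since $|\varphi_\infty|\leq 1$, which follows from $\|\varphi_n\|_0=1$ together with Proposition \ref{proOutC2}; the second by Corollary \ref{corgradientBound} combined with the smoothness of the mean curvature $H$; and the third via (\ref{eqLaplaceFermi2})--(\ref{eqLaplaceFermi3}) and the tangential bounds of Lemma \ref{lemtangDiffGhost}.

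Next I would establish the existence and one-sided continuity of the normal derivatives. Fix $s\in\Gamma$. The bound (\ref{eqrr}) shows that $r\mapsto\partial_r\varphi_\infty(r,s)$ is Lipschitz on $(0,2d_0)$ and therefore extends continuously to $r=0^+$; the analogous statement holds on $(-2d_0,0)$. This gives both the existence of the one-sided normal derivatives of $\mathds{1}_{\Omega_i}\varphi_\infty$ on $\Gamma$ and the continuity (\ref{eqphinftyrcontlekt}).

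Finally I would pass to the limit $\delta_1,\delta_2\to 0^+$ in (\ref{eqrefl1}). Since $J(0,s)=1$ by (\ref{eqJ=1}) and the two one-sided limits of $\partial_r\varphi_\infty$ at $r=0$ exist by the previous step, the right-hand side of (\ref{eqrefl1}) vanishes in the limit, yielding
\[
\partial_r(\mathds{1}_{\Omega_1}\varphi_\infty)(0,s)+\partial_r(\mathds{1}_{\Omega_2}\varphi_\infty)(0,s)=0,
\]
which is precisely (\ref{eqc11}).

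I do not anticipate a serious obstacle: every required ingredient is already prepared. The only mild technicality is making sure $\Delta_{\Gamma_r}\varphi_\infty$ is uniformly controlled up to $r=0$, but this is immediate from the structural identity (\ref{eqLaplaceFermi2})--(\ref{eqLaplaceFermi3}) and the second-order tangential bound supplied by Lemma \ref{lemtangDiffGhost}.
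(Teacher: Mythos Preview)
Your proposal is correct and follows essentially the same route as the paper. The paper derives (\ref{eqrr}) by combining Proposition \ref{proOutC2}, Lemma \ref{lemtangDiffGhost} and Corollary \ref{corgradientBound} via the Fermi-coordinate expression for the Laplacian (it cites (\ref{eqdiffbmg2}) rather than (\ref{eqLaplaceFermi1}), but these are equivalent), then obtains (\ref{eqphinftyrcontlekt}) from (\ref{eqrr}) and Corollary \ref{corgradientBound} (phrased via Arzela--Ascoli rather than Lipschitz extension, but it is the same mechanism), and finally passes to the limit $\delta_1,\delta_2\to 0$ in (\ref{eqrefl1}) using (\ref{eqJ=1}) and (\ref{eqrr}) to get (\ref{eqc11}).
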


\begin{proof}
  Estimate (\ref{eqrr}) is a consequence of Proposition \ref{proOutC2}, Lemma \ref{lemtangDiffGhost} and Corollary \ref{corgradientBound} (recall also (\ref{eqdiffbmg2})).
The normal derivatives of $\mathds{1}_{\Omega_i}\varphi_\infty$, $i=1,2$, on $\Gamma$ exist and (\ref{eqphinftyrcontlekt}) holds
due to Corollary \ref{corgradientBound}, (\ref{eqrr}) and the Arzela-Ascoli theorem. Lastly, relation (\ref{eqc11}) follows by letting $\delta_i\to 0$, $i=1,2$, in (\ref{eqrefl1}) and using (\ref{eqJ=1}), (\ref{eqrr}).
\end{proof}

We are now in position to prove the main result of this subsection.\begin{pro}\label{prophiinftyshookes}
We have
\[
\varphi_\infty\equiv 0.
\]
\end{pro}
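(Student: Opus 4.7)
The plan is to define the difference
\[
\Phi(x):= \mathds{1}_{\Omega_1}(x)\,\varphi_\infty(x) - \mathds{1}_{\Omega_2}(x)\,\varphi_\infty(x),\qquad x\in\Omega,
\]
and show that $\Phi$ is a classical solution of the linearized problem (\ref{ewlinearFul}); then the nondegeneracy assumption (b) of Definition \ref{defNonDeg} forces $\Phi\equiv 0$, and hence $\varphi_\infty\equiv 0$. The reason to pass to $\Phi$ rather than $\varphi_\infty$ itself is that the reflection identities (\ref{eqc0}) and (\ref{eqc11}) established in Corollaries \ref{corgradientBound} and \ref{corsecondrr} are precisely the statements that $\Phi$ is continuous and $C^1$ across $\Gamma$, while $\varphi_\infty$ itself has opposite signs on the two sides.

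First I would verify that $\Phi\in C^1(\Omega)\cap C^2(\Omega\setminus \Gamma)$, with $\Phi=0$ on $\partial\Omega$. The tangential regularity on $\Gamma$ is provided by Lemma \ref{lemtangDiffGhost} together with (\ref{eqc0}); the normal continuity and the continuity of the normal derivative across $\Gamma$ are (\ref{eqc0}) and (\ref{eqc11}), combined with the pointwise bound $|\partial_{rr}\varphi_\infty|\leq C$ from (\ref{eqrr}). Next, I would check that $\Phi$ satisfies (\ref{ewlinearFul}) in each $\Omega_i$: differentiating the oddness assumption (\ref{eqfodd}) gives $f_u(u,x)=f_u(-u,x)$, so in $\Omega_2$, where $w=-w_2$, equation (\ref{eqlimit}) becomes $-\Delta\varphi_\infty=f_u(w_2,x)\varphi_\infty$; since $\Phi=-\varphi_\infty$ on $\Omega_2$, the same equation holds for $\Phi$ there, and on $\Omega_1$ one has $\Phi=\varphi_\infty$ and $w=w_1$ directly. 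Consequently
\[
-\Delta\Phi = \bigl[f_u(w_1,x)\mathds{1}_{\Omega_1}+f_u(w_2,x)\mathds{1}_{\Omega_2}\bigr]\Phi \qquad \text{in }\Omega\setminus\Gamma.
\]

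Because $\Phi$ is $C^1$ across $\Gamma$ and $C^2$ on each side with a locally bounded Laplacian, an elementary integration by parts against any test function shows that $\Phi$ is a weak (hence, by elliptic regularity, a strong) solution of (\ref{ewlinearFul}) on all of $\Omega$ with Dirichlet data. The nondegeneracy assumption (b) of Definition \ref{defNonDeg} then gives $\Phi\equiv 0$, which in turn means $\varphi_\infty\equiv 0$ on $\Omega_1\cup\Omega_2$, and hence on $\bar\Omega\setminus\Gamma$ by the continuity provided by Corollary \ref{corgradientBound}. I do not anticipate a major obstacle here beyond bookkeeping; the only subtle point is making sure the $C^1$-matching across $\Gamma$ is enough to legitimately use (\ref{ewlinearFul}), which is handled by a standard distributional argument (the jump of $\partial_r\Phi$ across $\Gamma$ vanishes, so no singular contribution appears when one integrates by parts).
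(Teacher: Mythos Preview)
Your proposal is correct and follows essentially the same route as the paper: define the odd reflection $\mathds{1}_{\Omega_1}\varphi_\infty-\mathds{1}_{\Omega_2}\varphi_\infty$ (the paper uses the opposite sign), use (\ref{eqc0}) and (\ref{eqc11}) to eliminate the surface terms in the integration by parts, conclude that this function is a weak and hence classical solution of (\ref{ewlinearFul}), and invoke nondegeneracy (b). The only difference is that the paper does not assert $C^1(\Omega)$ outright but instead carries out the integration by parts explicitly as a limit over $\Omega_i\setminus\Gamma(\delta)$ as $\delta\to 0$, using dominated convergence and the bounds from Corollaries \ref{corgradientBound}, \ref{corsecondrr}; this is precisely the ``standard distributional argument'' you allude to at the end.
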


\begin{proof}
Let
\begin{equation}\label{eqpsiyo}
\psi=\mathds{1}_{\Omega_2}\varphi_\infty-\mathds{1}_{\Omega_1}\varphi_\infty.
\end{equation}
The main effort will be placed in establishing that $\psi$ is a weak solution to (\ref{eqlimit}) in $\Omega$. We note that, thanks to Corollary \ref{corgradientBound}, the function  $\psi$ belongs to the Sobolev space $W_0^{1,2}(\Omega)$. Let $\omega\in C_c^\infty(\Omega)$ be arbitrary.

First, we will demonstrate that the   regularity properties of $\varphi_\infty$ that we have obtained thus far are sufficient for the validity of Green's identities:
\begin{equation}\label{eqalexia000-}
\int_{\Omega_i}\nabla \varphi_\infty \nabla \omega dx+\int_{\Omega_i}  \omega \Delta \varphi_\infty dx=\int_{\partial\Omega_i}  \omega  \partial_{\nu_i} \varphi_\infty dS,\ i=1,2,
\end{equation}
where $\nu_i$  stands for the outward unit normal vector to $\partial\Omega_i$.
We will provide the proof only for $i=1$.
Let
\[
\Omega_\delta=\Omega_1\setminus \Gamma(\delta), \ \delta\in (0,d_0).
\]
Clearly,
\begin{equation}\label{eqalexia000}
\int_{\Omega_\delta}\nabla \varphi_\infty \nabla \omega dx+\int_{\Omega_\delta}  \omega \Delta \varphi_\infty dx=\int_{\partial\Omega_\delta}  \omega  \partial_{\nu_\delta} \varphi_\infty dS,
\end{equation}
where $\nu_\delta$  stands for the outward unit normal vector to $\partial\Omega_\delta$.
By the dominated convergence theorem, we find
\begin{equation}\label{eqalexia222}
\int_{\Omega_\delta}\nabla \varphi_\infty \nabla \omega dx=\int_{\Omega_1}
\nabla \varphi_\infty \nabla \omega \mathds{1}_{\Omega_\delta} dx\to \int_{\Omega_1}
\nabla \varphi_\infty \nabla \omega dx \ \textrm{as}\ \delta \to 0,
\end{equation}
and
\begin{equation}\label{eqalexia333}
\int_{\Omega_\delta}  \omega \Delta \varphi_\infty dx=
\int_{\Omega_1}  \omega \Delta \varphi_\infty \mathds{1}_{\Omega_\delta} dx\to
\int_{\Omega_1}  \omega \Delta \varphi_\infty  dx\ \textrm{as}\ \delta \to 0,
\end{equation}
(recall (\ref{eqlimit}) and Corollary \ref{corgradientBound}).
On the other side, as in relation (1.14) of \cite{wei},  we have
\[\int_{\partial\Omega_\delta}  \omega  \partial_{\nu_\delta} \varphi_\infty dS=
\int_{\Gamma}  \omega (\delta,s) \partial_{\nu_1}  \varphi_\infty (\delta,s)\left(1+\mathcal{O}(\delta) \right) dS,\]
where we used that $\nu_\delta(s)=\nu_1(s)$. So, by  Corollaries \ref{corgradientBound}, \ref{corsecondrr} and the dominated convergence theorem, we get
\[\int_{\partial\Omega_\delta}  \omega  \partial_{\nu_\delta} \varphi_\infty dS=-
\int_{\Gamma}  \omega  \partial_r  \varphi_\infty(\delta,s)  dS+\mathcal{O}(\delta)\to
-\int_{\Gamma}  \omega  \partial_r  \varphi_\infty(0,s)  dS=\int_{\Gamma}  \omega  \partial_{\nu_1}  \varphi_\infty  dS.
\]
By letting $\delta\to 0$ in (\ref{eqalexia000}), via (\ref{eqalexia222}), (\ref{eqalexia333}) and the  above two relations, we deduce that (\ref{eqalexia000-}) holds for $i=1$ as desired. The proof for $i=2$ is identical and therefore we omit  it.

Subtracting  (\ref{eqalexia000-}) for $i=1$ from the corresponding equality with $i=2$, recalling Proposition \ref{proOutC2} and (\ref{eqpsiyo}), yields
\begin{equation}\label{eqweak}
\int_{\Omega}\nabla \psi \nabla \omega dx-\int_{\Omega} f_u(w,x)\psi \omega  dx=\int_{\Gamma}  \omega  \left(\partial_{\nu_2} \varphi_\infty-
\partial_{\nu_1} \varphi_\infty\right) dS.
\end{equation}
We observe that, owing to (\ref{eqc11}), we have
\[
\partial_{\nu_2} \varphi_\infty-
\partial_{\nu_1} \varphi_\infty=\partial_{r}\left( \mathds{1}_{\Omega_2}\varphi_\infty(0,s)\right)+
\partial_{r}\left(\mathds{1}_{\Omega_1} \varphi_\infty(0,s)\right)=0\ \textrm{on}\ \Gamma.
\]
Consequently, relation (\ref{eqweak}) implies that $\psi$ is a weak solution to
(\ref{eqlimit}) in $\Omega$. Since the righthand side of (\ref{eqlimit}) is Lipschitz continuous on $\bar{\Omega}$ (recall again (\ref{eqgradientboundInf}) and the assumptions on $f$ and $w$ from the introduction), by standard regularity theory we infer that $\psi\in C^{2,1}(\bar{\Omega})$. Hence, problem (\ref{eqlimit}) is satisfied in the classical sense in $\Omega$ by $\psi$.

Finally, by the nondegeneracy assumption from Definition \ref{defNonDeg} associated to (\ref{ewlinearFul}) (keep in mind (\ref{eqw0}) and that $f$ is odd), we conclude that $\psi\equiv 0$.
Hence, in light of (\ref{eqpsiyo}), we have established the assertion of the proposition.
\end{proof}

\subsection{Uniform vanishing of $\varphi_n$, $\epsilon_n\partial_r\varphi_n$, $\epsilon_n^2\partial_{rr}\varphi_n$ in $\Gamma(d)$, and of $\varphi_n$ in $C^2\left(\Omega\setminus \Gamma(d) \right)$ as $n\to \infty$}\label{subsecSumarizing}
In light of Propositions \ref{proOutC2} and \ref{prophiinftyshookes}, passing to a subsequence if necessary, we infer that the second term in the expression for $\|\varphi_n\|_0$ in (\ref{eqNorm0}) tends to zero, i.e.,
\begin{equation}\label{eqC2mideniki}
\|\varphi_n\|_{C^2\left(\Omega\setminus \Gamma(d)\right)}\to 0\ \textrm{as}\ n\to \infty.
\end{equation}
In what follows we will show that the same holds for the first, fourth and last term in the corresponding righthand side of (\ref{eqNorm0}).

\begin{pro}\label{prouniform}
We have
\[
\varphi_n\to 0\ \textrm{uniformly\ on}\ \overline{\Gamma(d)}\ \textrm{as}\ n\to \infty.
\]
\end{pro}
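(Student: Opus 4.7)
The plan is to argue by contradiction. Suppose there exist $\eta>0$ and (along a further subsequence) points $x_n\in \overline{\Gamma(d)}$ with $|\varphi_n(x_n)|\geq \eta$. Writing $r_n=r(x_n)$, $s_n=s(x_n)$, we may extract by compactness of $\Gamma$ so that $s_n\to p\in \Gamma$, and the proof splits according to the behavior of $r_n/\epsilon_n$.

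If $|r_n|/\epsilon_n\to \infty$ along a subsequence, we assume without loss of generality $r_n>0$ so $x_n\in \Omega_1$ eventually. Combining $\varphi_\infty\equiv 0$ (Proposition \ref{prophiinftyshookes}) with Proposition \ref{proexchangelemma} gives $|\varphi_{1,n}(x_n)|\leq Ce^{-cr_n/\epsilon_n}+o(1)\to 0$, while Proposition \ref{proexpdec} yields $|\varphi_{2,n}(x_n)|\to 0$, contradicting $|\varphi_n(x_n)|\geq \eta$.

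In the remaining case $r_n/\epsilon_n\to z_0\in \mathbb{R}$, the idea is to re-run the blow-up of Lemma \ref{lemBU} with the moving center $s_n$ in place of the fixed base point: set
\[
\tilde{\varphi}_n(z,\tilde y)=\varphi_n\bigl(\epsilon_n b^{-1}(s_n+\epsilon_n\tilde y)\,z+\epsilon_n\zeta(s_n+\epsilon_n\tilde y),\ s_n+\epsilon_n\tilde y\bigr).
\]
Smoothness of $b,\zeta,H(0,\cdot)$ on $\Gamma$ together with $s_n\to p$ make the coefficients in the analogue of (\ref{eqlarge}) converge uniformly on compacts, so the same $W^{2,p}$-diagonal argument and the classification from \cite[Thm.~1]{sourdisJDE} apply; along a further subsequence, $\tilde{\varphi}_n\to c_*\mathbf{V}'(z)$ in $C^1_{loc}(\mathbb{R}^N)$ for some $c_*\in \mathbb{R}$. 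To show $c_*=0$, fix large $L>0$: on one hand $\tilde{\varphi}_{1,n}(L,0)\to c_*V_1'(L)$; on the other hand, this equals $\varphi_{1,n}$ at a point with $r\sim \epsilon_n L/b(p)$, so Proposition \ref{proexchangelemma} combined with $\varphi_\infty\equiv 0$ bounds it by $Ce^{-cL/b(p)}+o(1)$. Sending $n\to\infty$ and then $L\to\infty$, together with $V_1'(L)\to A>0$ from (\ref{eqBerestAsympt}), forces $c_*=0$. Evaluating $\tilde{\varphi}_n$ at the scaled coordinate of $x_n$, which is bounded by the case assumption, then gives $\varphi_n(x_n)\to 0$, the required contradiction.

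The main obstacle is the moving-center blow-up in the second case: Lemma \ref{lemBU} is stated for a fixed base point $p$, whereas here $p$ must slide along $\Gamma$ with $n$. Revisiting the proof of Lemma \ref{lemBU} shows that nothing essential changes, since only the continuity of $b$, $\zeta$, and $H(0,\cdot)$ at $p=\lim s_n$ is used to pass to the limit in the relevant PDE and identify the limiting profile; the subsequent matching argument producing $c_*=0$ is the crucial new input and relies decisively on the fact, established in Subsection \ref{subsecreflection}, that $\varphi_\infty\equiv 0$.
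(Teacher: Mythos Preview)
Your argument is correct and reaches the same conclusion, but the route differs from the paper's in two noteworthy respects.

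First, in the inner case $r_n/\epsilon_n$ bounded, the paper avoids a moving-center blow-up entirely: it exploits the tangential Lipschitz bound $\|\nabla_\Gamma\varphi_n\|_{L^\infty}\leq 1$ contained in $\|\varphi_n\|_0=1$ to transfer the lower bound $\varphi_1(x_n)\geq 1$ from the point $(r_n,s_n)$ to the nearby fixed fiber $s=s_*$, so that Lemma~\ref{lemBU} can be applied verbatim at the fixed base point $s_*$. Your extension of Lemma~\ref{lemBU} to a drifting base point $s_n\to p$ is legitimate (only continuity of $b$, $\zeta$, $H(0,\cdot)$ enters the passage to the limit in (\ref{eqlarge})), but it is a step the paper sidesteps.

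Second, the mechanism for ruling out a nontrivial limit $c_*\mathbf{V}'$ is different. You match the blow-up value $\tilde\varphi_{1,n}(L,0)\to c_*V_1'(L)$ against the outer decay from Proposition~\ref{proexchangelemma} (combined with $\varphi_\infty\equiv 0$) and send $L\to\infty$, using $V_1'(L)\to A>0$. The paper instead stays inside a fixed compact window $[-M,M]$ in $z$ and uses the strict convexity $V_1''=V_1V_2^2>0$: since $c_*>0$ (from the lower bound at $r_n$) and $V_1'$ is increasing, one gets $\varphi_1(M\epsilon_n,s_*)\geq \varphi_1(0,s_*)+o(1)\geq 1/2+o(1)$, contradicting $|\varphi_1|<1/4$ in $\Omega\setminus\Gamma(M\epsilon_n)$.

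Both contradictions ultimately rely on Propositions~\ref{proexchangelemma} and~\ref{prophiinftyshookes} to control the outer region; your matching argument is slightly more robust in that it does not need the sign of $V_1''$, while the paper's argument is more self-contained at the level of Lemma~\ref{lemBU} and makes essential use of the a priori tangential gradient control built into the norm $\|\cdot\|_0$.
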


\begin{proof}
We will argue by contradiction. So, without loss of generality, let us suppose that
along a subsequence there exist $x_n\in \Gamma(d)$ such that
\begin{equation}\label{eqolme1}
\varphi_1(x_n)\geq 1.
\end{equation}

By virtue of Propositions \ref{proexpdec}, \ref{proexchangelemma} and \ref{prophiinftyshookes}, we infer that
there exists an $M\gg 1$ (independent of $n$) such that
\begin{equation}\label{eqolme2}
  |\varphi_1|<1/4\ \textrm{in}\ \Omega\setminus \Gamma\left(  M\epsilon_n\right),
\end{equation}
provided that $n$ is sufficiently large. Hence, in view of (\ref{eqolme1}) and (\ref{eqolme2}), we can write
\begin{equation}\label{eqolme3}
x_n=(r_n,s_n)\ \textrm{with}\ |r_n|\leq C\epsilon_n\ \textrm{and}\ s_n\in \Gamma.
\end{equation}
Passing to a subsequence if needed, we may assume that
\begin{equation}\label{eqolme4}
s_n\to s_*\ \textrm{for some}\ s_*\in \Gamma.
\end{equation}

By (\ref{eqcontraNorm}), (\ref{eqolme1}) and (\ref{eqolme3}), we deduce that
\begin{equation}\label{eqolme5}
\varphi_1\geq 1/2\ \textrm{in}\ \Gamma(C\epsilon)\cap B_\delta(s_*),
\end{equation}
where $B_\delta(s_*)\subset \Gamma(d)$ denotes  the ball with center at $s_*$ and radius $\delta$ (independent of $n$).

By Lemma \ref{lemBU}, passing to a further subsequence if necessary,  we obtain
\begin{equation}\label{eqolme6}
\varphi_1(r,s_*)-c_*V_1'\left(\epsilon^{-1}b(s_*) \left(r- \epsilon \zeta(s_*)\right) \right)\to 0\ \textrm{uniformly on}\ [-M\epsilon,M\epsilon]\ \textrm{as}\ n\to \infty,
\end{equation}
where $M$ as in (\ref{eqolme2}) and $c_*\in \mathbb{R}$ independent of $n$. Due to (\ref{eqolme5}) and the fact that $V_1'>0$, we see that
  $c_*>0$.
 Making again use of (\ref{eqolme6}) but now of the property  $V_1''>0$, we arrive at
\[
\varphi_1(M\epsilon,s_*)-\varphi_1(0,s_*)\geq o(1),\ \textrm{where}\ o(1)\to 0\ \textrm{as}\ n\to \infty.
\]
 However, the above
relation is in contradiction to (\ref{eqolme2}) and (\ref{eqolme5}).
\end{proof}

As a consequence of the above proposition, we have the following.

\begin{cor}\label{corur}
We have
\[
\epsilon \varphi_r\to 0\ \textrm{and}\  \epsilon^2 \varphi_{rr}\to 0\ \textrm{uniformly on}\ \overline{\Gamma(d)}\ \textrm{as}\ n\to \infty.
\]
\end{cor}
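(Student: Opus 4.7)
The plan is to argue by contradiction for each assertion, with Proposition \ref{prouniform} as the key input. For the first, I would use the blow-up Lemma \ref{lemBU} with base point varying with $n$; for the second, I would pass directly through the linearized equation.

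Suppose, toward the first contradiction, that $\epsilon_n|\partial_r\varphi_n(x_n)|\geq c_0>0$ along a subsequence with $x_n\in\overline{\Gamma(d)}$. By Proposition \ref{prophiinftyshookes} we have $\varphi_\infty\equiv 0$, so Proposition \ref{proexchangelemma} gives $\epsilon_n|\partial_r\varphi_n|\leq Ce^{-c\,\mathrm{dist}(x,\Gamma)/\epsilon_n}+o(1)$ on each side of $\Gamma$, confining the failure point to $|r_n|\leq M\epsilon_n$ for some large $M$. Writing $x_n=(r_n,s_n)$ and passing to a subsequence with $s_n\to s_*$, I would re-run the blow-up of Lemma \ref{lemBU} centered at $s_n$ rather than at a fixed point: since $\Gamma$ is compact, the constants in that proof are uniform in the choice of base point, and along a further subsequence the rescaled sequence
\[
\tilde{\Psi}_n(z,y):=\varphi_n\bigl(\epsilon_nb^{-1}(s_n)z+\epsilon_n\zeta(s_n),\ \sigma_{s_n}(\epsilon_n y)\bigr),
\]
with $\sigma_{s_n}$ a local chart centered at $s_n$, converges in $C^1_{loc}(\mathbb{R}^N)$ to $c_*\mathbf{V}'(z)$. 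Proposition \ref{prouniform} applied at $(\epsilon_n\zeta(s_n),s_n)$ gives $\tilde{\Psi}_n(0,0)\to 0$, forcing $c_*=0$ since $\mathbf{V}'(0)\neq 0$. As $z_n:=b(s_n)(r_n/\epsilon_n-\zeta(s_n))$ is bounded, the identity $\epsilon_n\partial_r\varphi_n(x_n)=b(s_n)\partial_z\tilde{\Psi}_n(z_n,0)$ yields $\epsilon_n\partial_r\varphi_n(x_n)\to 0$, contradicting the assumption.

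For the second assertion, combining (\ref{eqLaplaceFermi1}) with (\ref{eqL}) gives, for the first component,
\[
\epsilon_n^2\partial_{rr}\varphi_{1,n}=\epsilon_n^{-2}U_{2,n}^2\varphi_{1,n}+2\epsilon_n^{-2}U_{1,n}U_{2,n}\varphi_{2,n}-\epsilon_n^2 f_u(U_{1,n},x)\varphi_{1,n}-\epsilon_n^2 g_{1,n}+\epsilon_n^2 H\partial_r\varphi_{1,n}-\epsilon_n^2\Delta_{\Gamma_r}\varphi_{1,n},
\]
with the analogous identity for $\varphi_{2,n}$. The last four terms vanish uniformly on $\overline{\Gamma(d)}$ by elementary bookkeeping: the $f_u$- and $\Delta_{\Gamma_r}$-terms are $O(\epsilon_n^2)$ since $\|\varphi_n\|_0\leq 1$ controls $\varphi_n$ together with its tangential derivatives up to second order; the $g_1$-term is $o(\epsilon_n)$ since $w(r)\geq 1$ in (\ref{eqnorm1}) gives $|g_{1,n}|\leq\|g_n\|_1=o(\epsilon_n^{-1})$; and the $H\partial_r\varphi_1$-term is $o(\epsilon_n)$ by the first assertion.

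The main work will be in the two quadratic-potential terms. Using the inner expansion (\ref{eqinner}) in $\Gamma(2|\ln\epsilon|\epsilon)$, the outer expansion (\ref{eqoutdef1}) outside, and Remark \ref{remGradExpDec} for the cross-component decay, one checks that $\epsilon_n^{-2}U_{1,n}U_{2,n}$ is uniformly bounded on $\overline{\Gamma(d)}$ (it behaves like $b^2V_1(z)V_2(z)$ modulo lower-order terms in the inner zone and is $O(\epsilon_n^\infty)$ outside), so Proposition \ref{prouniform} applied to $\varphi_{2,n}$ immediately yields the uniform decay of the mixed term; the same analysis covers the first term on $\Omega_1\cap\overline{\Gamma(d)}$, where $\epsilon_n^{-2}U_{2,n}^2$ is uniformly bounded as well. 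The principal obstacle is the first term on $\Omega_2\cap\overline{\Gamma(d)}$, where $\epsilon_n^{-2}U_{2,n}^2$ grows like $C(1+(|r|/\epsilon_n)^2)$, due to the linear vanishing of $w_2$ along $\Gamma$ dictated by (\ref{eqwHopf}). I would overcome this by interpolating Proposition \ref{prouniform} with the exponential decay $|\varphi_{1,n}|\leq Ce^{-c|r|/\epsilon_n}+o(\epsilon_n^\infty)$ of Proposition \ref{proexpdec}: setting $a_n:=\|\varphi_n\|_{L^\infty(\overline{\Gamma(d)})}\to 0$ and splitting $\Omega_2\cap\overline{\Gamma(d)}$ according to whether $|r|/\epsilon_n<a_n^{-1/4}$ or not, the product $\epsilon_n^{-2}U_{2,n}^2|\varphi_{1,n}|$ is bounded by $C(1+a_n^{-1/2})a_n$ on the first set and by $C(1+a_n^{-1/2})e^{-ca_n^{-1/4}}$ on the second, both of which tend to zero. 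The argument for $\varphi_{2,n}$ is entirely symmetric.
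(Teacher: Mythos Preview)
Your argument is essentially correct, but it takes a more laborious route than the paper and has one small gap.

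In your first step, the confinement of $x_n$ to $|r_n|\leq M\epsilon_n$ via Proposition~\ref{proexchangelemma} is incomplete: that proposition only controls $\epsilon_n\partial_r\varphi_{i,n}$ on $\Omega_i$, whereas you need the full vector $\epsilon_n|\partial_r\varphi_n|$ to be small on $\Omega_i\setminus\Gamma(M\epsilon_n)$, in particular $\epsilon_n|\partial_r\varphi_{j,n}|$ for $j\neq i$. This is fixable (rescaled interior elliptic estimates applied to the equation for $\varphi_{j,n}$, using Proposition~\ref{proexpdec} and (\ref{eqremexpdecay2})), but you have not said so.

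More to the point, the paper avoids the blow-up entirely and is much shorter. The observation is that the computation you carry out for the \emph{second} assertion---bounding $\epsilon_n^{-2}U_{2,n}^2\varphi_{1,n}$ by interpolating Propositions~\ref{proexpdec} and~\ref{prouniform}, and likewise for the mixed term---already shows $\epsilon_n^2\Delta\varphi_n\to 0$ uniformly on $\bar\Omega$. Standard rescaled interior estimates (as in \cite[Lem.~A.1]{bbh24}) on balls of radius $\sim\epsilon_n$ then give $\epsilon_n|\nabla\varphi_n|\to 0$ on $\overline{\Gamma(d)}$, which is the first assertion; the second follows at once from (\ref{eqLaplaceFermi1})--(\ref{eqLaplaceFermi3}) once $\epsilon_n^2\Delta\varphi_n\to 0$, $\epsilon_n\partial_r\varphi_n\to 0$, and $\|\varphi_n\|_0\leq 1$ (for the tangential part) are in hand. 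In other words, your interpolation argument already contains the whole proof; the separate blow-up step is redundant.
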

\begin{proof}
By combining (\ref{eqinhomog}), (\ref{eqpot1}), (\ref{eqpot2}), (\ref{eqcontraNorm}), (\ref{eqC2mideniki}) and Propositions \ref{proexpdec}, \ref{prouniform}, we get
\begin{equation}\label{eqmagid}
\epsilon^2 \Delta \varphi\to 0\ \textrm{uniformly on}\ \bar{\Omega}\ \textrm{as}\ n\to \infty.
\end{equation}
Standard elliptic estimates (see in particular \cite[Lem. A.1]{bbh24}) now lead to
\begin{equation}\label{eqmagid2}
\epsilon|\nabla \varphi|\to 0\ \textrm{uniformly in}\ \Gamma(2d)\ \textrm{as}\ n\to \infty,
\end{equation}
which clearly implies the validity of the first assertion of the corollary.
The remaining assertion  follows readily from (\ref{eqLaplaceFermi1}), (\ref{eqLaplaceFermi2}), (\ref{eqLaplaceFermi3}), (\ref{eqcontraNorm}), (\ref{eqmagid}) and (\ref{eqmagid2}).
\end{proof}

\begin{rem}\label{remexpdec}
It is easy to see that, armed with Proposition \ref{prouniform}, the proof of Proposition \ref{proexpdec} gives
\[
\varphi_i(x)=o(1)e^{-c\textrm{dist}(x,\Gamma)/\epsilon}+\mathcal{O}(\epsilon^\infty),\ x\in \Omega_j, \ i\neq j,
\]
with $o(1)\to 0$ uniformly in $x$ as $n\to \infty$.
\end{rem}

\subsection{Uniform vanishing of the tangential derivatives in the norm (\ref{eqNorm0}) as $n\to \infty$}\label{subsecTange}

It remains to deal with the tangential derivatives of $\varphi_n$ that appear in (\ref{eqNorm0}). To this end, we will  differentiate (\ref{eqinhomog}) tangentially to $\Gamma$.
We first note that, thanks to the assumptions in Subsection \ref{subsecAnsatz}, we can differentiate (\ref{eqpot1}) and (\ref{eqpot2}) tangentially and get
\begin{equation}\label{eqtangdiffpot}
\left|\partial_{ij}(U_1^2)\right|+\left|\partial_{i}(U_1^2)\right|\leq C(r^2+\epsilon^2),
\end{equation}
\begin{equation}\label{eqtangdiffpot2}\left|\partial_{ij}(U_1U_2)\right|+\left|\partial_{i}(U_1U_2)\right|+\left|\partial_{ij}(U_2^2)\right|+\left|\partial_{i}(U_2^2)\right|\leq C \epsilon^2e^{-cr/\epsilon}+\mathcal{O}(\epsilon^\infty),\end{equation}
for $r\in [0,2d_0)$, $s\in \Gamma$, $i,j=1,\cdots,N-1$, as is easy to check. Analogous estimates hold for $r\in (-2d_0,0]$.

The following lemma will be useful throughout this subsection.
\begin{lem}\label{lemexpdecay}
If $i\neq j$, then
\[
\left|\nabla \varphi_i\right|+\epsilon\left|\nabla^2 \varphi_i\right|+\epsilon^2\left|\nabla^3 \varphi_i\right|=o(1) \epsilon^{-1}e^{-c\textrm{dist}(x,\Gamma)/\epsilon}+\mathcal{O}(\epsilon^\infty), \
x\in\Omega_j\cap \Gamma(3d_0/2).\]
\end{lem}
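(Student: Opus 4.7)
\noindent\textbf{Proof plan for Lemma \ref{lemexpdecay}.} The strategy is to upgrade the pointwise decay of Remark~\ref{remexpdec} to a derivative estimate by a rescaling plus interior elliptic regularity argument. Fix $i\neq j$; the $i$-th component of (\ref{eqinhomog}) on $\Omega_j$ can be rewritten in ``semiclassical Schr\"odinger'' form
\[
-\epsilon^4\Delta\varphi_i + U_j^2\varphi_i = F_i, \qquad F_i := -2U_1U_2\varphi_j + \epsilon^4 f_u(U_i,x)\varphi_i + \epsilon^4 g_i.
\]
Using (\ref{eqremexpdecay2}), the bound $\|\varphi_j\|_{L^\infty}\leq 1$ coming from (\ref{eqcontraNorm}), Remark~\ref{remexpdec}, and the $e^{1/\epsilon}$-weighted control on $g_i|_{\Omega_j}$ built into (\ref{eqnorm1}) combined with $\epsilon_n\|g_n\|_1\to 0$, I would first verify that
\[
F_i(x) = o(\epsilon^2)\,e^{-c\,\mathrm{dist}(x,\Gamma)/\epsilon} + \mathcal{O}(\epsilon^\infty), \qquad x\in \Omega_j\cap \Gamma(3d_0/2),
\]
with analogous bounds on spatial derivatives of $F_i$ up to first order (using (\ref{eqtangdiffpot})--(\ref{eqtangdiffpot2}) and the ansatz assumptions of Subsection~\ref{subsecAnsatz}).

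\medskip

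\noindent Next I would freeze an arbitrary $x_0\in\Omega_j\cap\Gamma(3d_0/2)$ -- noting that $B_\rho(x_0)\subset\Omega$ for some $\rho>0$ independent of $\epsilon$, since $\Gamma(2d_0)\subset\Omega$ -- and pass to blown-up coordinates $y=(x-x_0)/\epsilon$, $\widetilde\varphi_i(y):=\varphi_i(x_0+\epsilon y)$. Dividing the equation by $\epsilon^2$, on the unit ball $B_1\subset\mathbb{R}^N$ one obtains
\[
-\Delta_y\widetilde\varphi_i + \widetilde P(y)\,\widetilde\varphi_i = \widetilde F(y),\qquad \widetilde P(y) := \epsilon^{-2}U_j^2(x_0+\epsilon y),\quad \widetilde F(y):= \epsilon^{-2}F_i(x_0+\epsilon y).
\]
By (\ref{eqpot1}) and the inner ansatz regularity, $\widetilde P$ is $C^{1,\alpha}$-bounded uniformly in $\epsilon$ and $x_0$, while
\[
\|\widetilde\varphi_i\|_{C^0(B_1)} + \|\widetilde F\|_{C^{1,\alpha}(B_1)} = o(1)\,e^{-c\,\mathrm{dist}(x_0,\Gamma)/\epsilon}+\mathcal{O}(\epsilon^\infty),
\]
the exponential factor being essentially constant across $B_1$ (after a harmless reduction of $c$, since $|\mathrm{dist}(x_0+\epsilon y,\Gamma)-\mathrm{dist}(x_0,\Gamma)|\leq\epsilon$ on $B_1$). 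Standard interior Schauder estimates applied twice then yield $\|\widetilde\varphi_i\|_{C^{3,\alpha}(B_{1/2})}$ bounded by the same right-hand side. Undoing the rescaling via $\nabla^k_x\varphi_i(x_0)=\epsilon^{-k}\nabla^k_y\widetilde\varphi_i(0)$ for $k=1,2,3$ delivers the stated estimate.

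\medskip

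\noindent The main obstacle I anticipate is preserving the $o(1)$ prefactor through the Schauder step: this requires that both $\|\widetilde\varphi_i\|_{C^0(B_1)}$ and $\|\widetilde F\|_{C^{1,\alpha}(B_1)}$ vanish as $n\to\infty$ (up to the exponential factor), which is exactly what is ensured by the $o(1)$ in Remark~\ref{remexpdec} and by $\epsilon_n\|g_n\|_1\to 0$. A secondary bookkeeping point is uniformity of the Schauder constants with respect to $x_0\in\Omega_j\cap\Gamma(3d_0/2)$; this follows from the explicit asymptotic form of $U_j$ on $\Omega_j$ furnished by Subsection~\ref{subsecAnsatz}, together with the super-exponential decay of $V_j$ in the relevant direction, which render $\widetilde P$ and its derivatives uniformly bounded. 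Since $\Gamma(3d_0/2)\subset\Omega$, the argument stays purely interior and no boundary-layer issues near $\partial\Omega$ appear.
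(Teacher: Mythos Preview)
Your rescaling-plus-Schauder scheme is natural, but there is a genuine gap in the key claim that $\widetilde P(y)=\epsilon^{-2}U_j^2(x_0+\epsilon y)$ is uniformly $C^{1,\alpha}$-bounded. On $\Omega_j$ the component $U_j$ is the \emph{large} one: by the ansatz (\ref{eqoutdef1}) and the asymptotics (\ref{eqBerestAsympt}) (recall $V_j(z)\sim Az$ as $z\to+\infty$ into $\Omega_j$), one has $U_j(x)\sim w_j(x)\sim c\,\mathrm{dist}(x,\Gamma)$ near $\Gamma$ and bounded away from zero in the outer zone. Hence $\widetilde P\sim (\mathrm{dist}(x_0,\Gamma)/\epsilon)^2$, which blows up whenever $\mathrm{dist}(x_0,\Gamma)\gg\epsilon$; in particular for $x_0$ with $\mathrm{dist}(x_0,\Gamma)\sim d_0$ one has $\widetilde P\sim\epsilon^{-2}$. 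Your justification via ``super-exponential decay of $V_j$ in the relevant direction'' has the sign of $z$ reversed: it is $V_i$ (with $i\neq j$) that decays into $\Omega_j$, not $V_j$. With $\widetilde P$ unbounded, the Schauder constants are not uniform and the single-shot $C^{3,\alpha}$ estimate does not follow as stated.

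The paper avoids this by \emph{not} keeping the potential on the left. It writes the equation as $-\Delta\varphi_i=\mathcal{O}(\epsilon^{-2})e^{-c\,\mathrm{dist}(x,\Gamma)/\epsilon}$, absorbing $\epsilon^{-4}U_j^2\varphi_i$ into the right-hand side (the polynomial growth of $\epsilon^{-4}U_j^2$ is killed by the exponential decay of $\varphi_i$ from Remark~\ref{remexpdec}, at the cost of shrinking $c$). A gradient interpolation on $\epsilon$-balls then yields the bound on $\nabla\varphi_i$; one then differentiates the equation, sets $v=\partial_{x_k}\varphi_i$, bounds $-\Delta v$ using (\ref{eqremexpdecay2}), (\ref{eqspud}) and the first step, and iterates once more for $\nabla^3\varphi_i$. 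Your scheme can be repaired along the same lines: move $\widetilde P\widetilde\varphi_i$ to the right and bootstrap, rather than invoking Schauder with an unbounded zeroth-order coefficient.
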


\begin{proof} We will only consider the case $i=2$, $j=1$ since the other case is identical.
From (\ref{eqinhomog}),  (\ref{eqpot1}), (\ref{eqpot2}), (\ref{eqcontraNorm}) and Proposition \ref{proexpdec}, we obtain
\[
-\Delta \varphi_2=\mathcal{O}\left(\epsilon^{-2}e^{-cr/\epsilon} \right)+\mathcal{O}(\epsilon^\infty),\ r\in (-3\epsilon,2d_0), \ s\in \Gamma.
\]
By Remark \ref{remexpdec}, the above relation and standard interior elliptic estimates (see for instance \cite[Lem. A.1]{bbh24}), we deduce that
\begin{equation}\label{eqs1}
|\nabla \varphi_2|=o\left(\epsilon^{-1}e^{-cr/\epsilon} \right)+\mathcal{O}(\epsilon^\infty),\ r\in (-2\epsilon,2d_0-\epsilon), \ s\in \Gamma.
\end{equation}

We note that, besides of (\ref{eqremexpdecay1}) and (\ref{eqremexpdecay2}), we also have
\begin{equation}\label{eqspud}
\left|\nabla (U_1^2) \right|\leq C(r+C\epsilon), \ r\in (-2\epsilon,2d_0),\ s\in \Gamma.
\end{equation}
For $k=1,\cdots,N$, we let
\begin{equation}\label{eqs2}
v=\partial_{x_k}\varphi_2.
\end{equation}
Then, by differentiating the second equation in (\ref{eqinhomog}) with respect to $x_k$, using (\ref{eqcontraNorm}) and the above, we find
\[
-\Delta v=\mathcal{O}\left(\epsilon^{-3}e^{-cr/\epsilon} \right)+\mathcal{O}(\epsilon^\infty), \ r\in (-2\epsilon,2d_0-\epsilon), \ s\in \Gamma.
\]
Hence, by (\ref{eqs1}), (\ref{eqs2}), the above relation, and standard elliptic estimates,  we infer that
\begin{equation}\label{sloukas1}
|\nabla v|=o\left(\epsilon^{-2}e^{-cr/\epsilon} \right)+\mathcal{O}(\epsilon^\infty), \ r\in (-\epsilon,2d_0-2\epsilon), \ s\in \Gamma.
\end{equation}

By our assumptions in Subsection \ref{subsecAnsatz}, we can differentiate (\ref{eqremexpdecay2}) and (\ref{eqspud}) one more time to get
\begin{equation}\label{eqonemoretine}
\left|\nabla^2(U_1U_2) \right|\leq C e^{-c\textrm{dist}(x,\Gamma)/\epsilon}\ \textrm{and}\ \left|\nabla^2(U_1^2) \right|\leq C, \ r\in (-\epsilon,2d_0),\ s\in \Gamma,
\end{equation}
respectively. For $k,l=1,\cdots,N$, we let
\begin{equation}\label{eqsloukas2}
\tilde{v}=\partial^2_{kl}\varphi_2=\partial_l v.
\end{equation}
Then, thanks to the above,  we can differentiate (\ref{eqinhomog}) one more time to arrive at
\begin{equation}\label{eqcf}
-\Delta \tilde{v}=\mathcal{O}\left(\epsilon^{-4}e^{-cr/\epsilon} \right)+\mathcal{O}(\epsilon^\infty), \ r\in (-\epsilon,2d_0-2\epsilon), \ s\in \Gamma.
\end{equation}
The remaining assertion of the lemma now follows from (\ref{sloukas1}), (\ref{eqsloukas2}), the above relation and standard elliptic estimates.
\end{proof}

As a first implication of the above lemma, we have the following improvement of (\ref{eqC2mideniki}).
\begin{cor}\label{corC3}
For any $\gamma\in (0,1)$, we have
\[
\|\varphi_n\|_{C^{3+\gamma}(\Gamma(3d_0/2)\setminus \Gamma(d))}\to 0\ \textrm{as}\ n\to \infty.
\]
\begin{proof}
Making stronger use of  (\ref{eqcontraNorm}), Lemma \ref{lemexpdecay} and (\ref{eqonemoretine}), we can differentiate twice (\ref{eqinhomog}) with respect to $x$ and find
that
\[
\Delta \left(\partial^2_{x_ix_j}\varphi_n\right) \ \textrm{is uniformly bounded over}\ \Gamma(2d_0)\setminus \Gamma(d/2)\ \textrm{as}\ n\to \infty,
 \]
 for $i,j=1,\cdots,N$ (cf. (\ref{eqcf})). Then, by (\ref{eqC2mideniki}) (applied with $d/2$ in place of $d$), standard  interior elliptic $W^{2,p}$ estimates and Sobolev embeddings, we obtain
 \[
 \partial^2_{x_ix_j}\varphi_n \ \textrm{is uniformly bounded in}\ C^{1+\beta}\left(\Gamma(3d_0/2)\setminus \Gamma(d)\right)\ \textrm{as}\ n\to \infty,
 \]
 for any $\beta \in (0,1)$. The assertion of the corollary now follows at once from  the compact  embedding of the above H\"{o}lder norms and (\ref{eqC2mideniki}).
\end{proof}
\end{cor}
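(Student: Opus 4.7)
The plan is an elliptic bootstrap. As a preliminary, observe that the argument establishing (\ref{eqC2mideniki}) goes through verbatim with $d/2$ in place of $d$, so that $\|\varphi_n\|_{C^2(\Omega\setminus\Gamma(d/2))}\to 0$. This furnishes the base $C^2$ control on a neighborhood of the target shell $\Gamma(3d_0/2)\setminus\Gamma(d)$. I would then differentiate the system (\ref{eqinhomog}) twice with respect to the Cartesian coordinates, producing, for each pair of indices $(k,l)$, an elliptic equation for $w_n^{(kl)} := \partial^2_{x_kx_l}\varphi_n$.

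The key step is to show that $\Delta w_n^{(kl)}$ is uniformly bounded in $L^\infty(\Gamma(2d_0)\setminus\Gamma(d/2))$. The dangerous $\epsilon_n^{-4}$ factors in (\ref{eqL}) are tamed on this region: the ``off-side'' quantities (namely $U_j^2$ in $\Omega_i$ for $i\neq j$ and the cross term $U_1U_2$ everywhere outside $\Gamma(d/2)$), together with their first and second Cartesian derivatives, are super-exponentially small in $\epsilon_n$ by the ansatz of Subsection \ref{subsecAnsatz}, (\ref{eqremexpdecay2}) and (\ref{eqonemoretine}); the ``wrong-side'' components $\varphi_i|_{\Omega_j}$ and their derivatives up to order three are themselves super-exponentially small by Lemma \ref{lemexpdecay}. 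The source contributions involving second Cartesian derivatives of $g_n$ are controlled by the weighted portion of $\|g_n\|_1$: since the weight $w(r)$ in (\ref{eqweight}) is at least of order $\epsilon_n^{-(1+\alpha)}$ for $r\geq d/2$, the hypothesis $\epsilon_n\|g_n\|_1\to 0$ forces the relevant $(r,s)$-derivatives of $g_n$ up to second order to stay bounded (indeed to vanish) uniformly in the slab of interest. Together with the uniform $C^2$ bound on $\varphi_n$ and the smoothness of $f_u(U_i,\cdot)$, this yields uniform $L^\infty$ boundedness of $\Delta w_n^{(kl)}$.

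Standard interior $W^{2,p}$ elliptic estimates applied on the slightly smaller shell $\Gamma(3d_0/2)\setminus\Gamma(d)$, with $p>N$ fixed so that $\beta := 1 - N/p \in (0,1)$ is as close to $1$ as desired, then yield uniform boundedness of each $w_n^{(kl)}$ in $W^{2,p}$, and hence via Sobolev embedding in $C^{1+\beta}$; equivalently $\varphi_n$ is bounded in $C^{3+\beta}(\Gamma(3d_0/2)\setminus\Gamma(d))$ uniformly in $n$. Given $\gamma\in(0,1)$, choose $p$ so that $\beta>\gamma$. The compact embedding $C^{3+\beta}\hookrightarrow C^{3+\gamma}$ combined with the already-established $C^2$-convergence $\varphi_n\to 0$ then forces $\varphi_n\to 0$ in $C^{3+\gamma}$: every subsequence has a further subsequence converging in $C^{3+\gamma}$ to some limit, which must coincide with the known $C^2$-limit, namely $0$, so the full sequence converges.

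The main delicate point is the first stage, verifying that $\Delta w_n^{(kl)}$ really is bounded in $L^\infty$ on $\Gamma(2d_0)\setminus\Gamma(d/2)$. Taming the $\epsilon_n^{-4}$ potentials requires the combined use of (\ref{eqonemoretine}) and Lemma \ref{lemexpdecay}, and the control on the second derivatives of $g_n$ requires unpacking the $(r,s)$-decomposition of the weighted norm (\ref{eqnorm1}) at points with $r\geq d/2$. The remainder is routine elliptic regularity together with an Arzela-Ascoli compactness argument.
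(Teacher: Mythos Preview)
Your proposal is correct and follows essentially the same route as the paper's proof: differentiate (\ref{eqinhomog}) twice in Cartesian coordinates, use (\ref{eqcontraNorm}), Lemma \ref{lemexpdecay} and (\ref{eqonemoretine}) to bound $\Delta(\partial^2_{x_kx_l}\varphi_n)$ uniformly on a slab slightly larger than the target, apply interior $W^{2,p}$ estimates together with (\ref{eqC2mideniki}) (with $d/2$ in place of $d$) and Sobolev embedding to get a uniform $C^{1+\beta}$ bound on the second derivatives, and conclude via the compact H\"older embedding combined with the known $C^2$ convergence to zero. Your write-up is somewhat more explicit than the paper's in singling out which terms are tamed by which ingredient, but the argument is the same.
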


Our first main result in this subsection is the following.

\begin{pro}\label{protang1}
We have
\[
|\nabla_\Gamma\varphi|\to 0\ \textrm{uniformly on}\ \overline{\Gamma(d)}\ \textrm{as}\ n\to \infty.
\]
\end{pro}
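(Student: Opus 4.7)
The plan is a direct contradiction argument that combines two already-established ingredients: the uniform vanishing $\varphi_n\to 0$ on $\overline{\Gamma(d)}$ from Proposition \ref{prouniform}, and the tangential second-derivative bound $\sum_\sigma \|\partial_\sigma(\nabla_\Gamma \varphi_n)\|_{L^\infty(\Gamma(d))} \leq \|\varphi_n\|_0 = 1$ built into the norm (\ref{eqNorm0}). The idea is that a tangential gradient bounded below at a point, together with Lipschitz control of that gradient in tangential directions, forces $\varphi_n$ itself to exhibit a definite increase along a tangential segment of fixed size, which is incompatible with uniform convergence to zero.

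Suppose toward contradiction that along a subsequence there exist $\delta_0>0$ and $x_n = (r_n,s_n) \in \overline{\Gamma(d)}$ with $|\nabla_\Gamma \varphi_n(x_n)| \geq \delta_0$. By compactness of $\overline{\Gamma(d)}$, the finitely many choices of tangential coordinate direction, and the fact that the derivative has a sign, I would pass to a further subsequence and fix a coordinate chart on $\Gamma$ centered at some $s_*\in\Gamma$, so that $s_n\to s_*$ and there is a single coordinate direction $s_{k_*}$ with $\partial_{s_{k_*}}\varphi_n(x_n)\geq \delta_0/\sqrt{N-1}$ (after relabeling, call this bound $\delta_0$ again).

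The hypothesis $\|\varphi_n\|_0=1$ then yields a constant $C>0$, independent of $n$, such that the function $s\mapsto \partial_{s_{k_*}}\varphi_n(r_n,s)$ is $C$-Lipschitz on the chart, and hence
\[
\partial_{s_{k_*}}\varphi_n(r_n,s)\geq \frac{\delta_0}{2}\qquad\text{whenever}\ |s-s_n|\leq \frac{\delta_0}{2C},
\]
shrinking $\delta_0$ if necessary so that this neighborhood fits in the chart. Integrating this pointwise lower bound along the tangential segment $t\in [0,\delta_0/(2C)]\mapsto (r_n, s_n+t\hat{e}_{k_*})$ at the fixed normal coordinate $r=r_n$ gives
\[
\varphi_n\!\left(r_n,\,s_n+\tfrac{\delta_0}{2C}\hat{e}_{k_*}\right)-\varphi_n(r_n,s_n)
\;=\;\int_0^{\delta_0/(2C)} \partial_{s_{k_*}}\varphi_n(r_n, s_n + t\hat{e}_{k_*})\,dt
\;\geq\; \frac{\delta_0^2}{4C}.
\]
The right-hand side is a fixed positive constant, yet both terms on the left are $o(1)$ as $n\to\infty$ by Proposition \ref{prouniform}, producing the desired contradiction.

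I do not foresee a genuine obstacle: no further blow-up analysis, tangential differentiation of (\ref{eqinhomog}), nor use of the limit ODE system is needed here, because the norm $\|\cdot\|_0$ was specifically designed to include the mixed second tangential derivative, and Proposition \ref{prouniform} already supplies the necessary uniform decay of $\varphi_n$. The only mildly technical point is the dependence of the maximizing tangential direction on $n$, which is handled cleanly by the preliminary subsequence extraction above.
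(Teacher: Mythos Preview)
Your argument is correct and is considerably more elementary than the paper's proof. The paper proceeds by letting $\psi=\varphi_s$, differentiating (\ref{eqinhomog}) tangentially to obtain the equation (\ref{eqtangpdes}), and then running two separate blow-up arguments: first to exclude the possibility that a maximum point $x_n=(r_n,s_n)$ of $\psi_1$ satisfies $|r_n|/\epsilon_n$ bounded (invoking \cite{sourdisJDE}), and then, after establishing the exponential decay (\ref{eqexpdecaytangs}), to show that a rescaling of $\psi_1$ converges to a positive constant harmonic function on $\mathbb{R}^N$, which is played off against the uniform smallness of $\varphi_n$ via the mean value theorem.

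By contrast, your proof uses only two facts already in hand: the uniform convergence $\varphi_n\to 0$ of Proposition~\ref{prouniform}, and the bound $\sum_\sigma\|\partial_\sigma(\nabla_\Gamma\varphi_n)\|_{L^\infty(\Gamma(d))}\le 1$ built into the norm~(\ref{eqNorm0}). The second gives tangential Lipschitz control of $\nabla_\Gamma\varphi_n$ at fixed $r$, so a lower bound on one directional tangential derivative persists on a tangential segment of \emph{fixed} length; integrating produces a fixed positive lower bound for a difference of values of $\varphi_n$, contradicting Proposition~\ref{prouniform}. No PDE for $\psi$, no blow-up, and no appeal to the inner system are needed.

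What the paper's longer route buys is not the bare statement of Proposition~\ref{protang1} but the machinery developed along the way: the differentiated equation (\ref{eqtangpdes}), the exponential decay (\ref{eqexpdecaytangs}), and the estimates (\ref{eqgiannis1}), (\ref{eqgiannis1+}) are invoked again in Corollary~\ref{cormix}, Remark~\ref{remexpdecSSS}, and the proof of Proposition~\ref{protang2}. So while your argument is the cleaner proof of the proposition as stated, the paper's approach is not wasted effort in the larger context.
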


\begin{proof}
For a tangential direction $s$ to $\Gamma$, we let
\begin{equation}\label{eqpsi1yo}
\psi=\varphi_s.
\end{equation}
By differentiating (\ref{eqinhomog}) with respect to $s$, making use of (\ref{eqLaplaceFermi1}), (\ref{eqLaplaceFermi2}), (\ref{eqLaplaceFermi3}), (\ref{eqtangdiffpot}) and (\ref{eqtangdiffpot2}), we obtain
\begin{equation}\label{eqtangpdes}\begin{split}
                                     \mathcal{L}(\psi)-g_s =\mathcal{O}(\varphi) + \mathcal{O}(\epsilon^\infty) \ \ \ \   \ \  \ \ \ \ \ \ \ \ \ \ \ \ \ \ \ \ \ \ \ \ \ \ \ \ \ \ \ \ \ \ \ \ \ \ \ \ \ \ \ \ \ \ \ \ \ \ \ \ \ \ \ \ \ \ \ \ \ \ \ \ \ \ \ \ \ \ \ \ \ \ \ \ \ \ \ \  \\
                                      +   \mathcal{O}(\varphi_r)+\mathcal{O}(r)\left(|\nabla_\Gamma \varphi|+|\nabla_\Gamma^2 \varphi| \right)+\epsilon^{-4}\left(\begin{array}{cc}
                                                                                                                                                 \mathcal{O}\left(r^2+\epsilon^2\right) & \mathcal{O}\left(\epsilon^2e^{cr/\epsilon}\right) \\
                                                                                                                                                  \mathcal{O}\left(\epsilon^2e^{cr/\epsilon}\right)& \mathcal{O}\left(\epsilon^2e^{cr/\epsilon}\right)
                                                                                                                                               \end{array}
 \right)\varphi, \ \ \ \ \ \ \ \ \ \ \ \ \
                                  \end{split}
\end{equation}
$r\in(-2d_0,0)$, $s\in \Gamma$. An analogous relation holds for $r\in (0,2d_0)$.

Suppose the assertion of the proposition is false. Then, owing to (\ref{eqC2mideniki}) (which is valid for any small fixed $d$), without loss of generality, there exist $m>0$ (independent of $n$) and $x_n=(r_n,s_n)$ such that
\begin{equation}\label{eqrn}
\psi_1(x_n)=\|\psi_1\|_{L^\infty(\Omega)}\geq m \ \textrm{with}\ r_n\to 0.
\end{equation}

We claim that
\begin{equation}\label{eqbfrac}
\frac{|r_n|}{\epsilon_n}\to \infty.
\end{equation}
Indeed, if not then
\[
\frac{r_n}{\epsilon_n}\to R_* \ \textrm{and}\ s_n\to s_*\ \textrm{for some}\ R_*\in [0,\infty)\ \textrm{and}\ s_*\in\Gamma,
\]
having passed to a subsequence if necessary. As in the proof of Lemma \ref{lemBU}, rescaling
around $(r_n,s_n)$, using (\ref{eqcontraNorm}) and Proposition \ref{prouniform}, passing to a further subsequence if needed, we find that there exists a bounded solution to
the  limit problem
\[\begin{array}{c}
    -\partial_{zz}\Psi_1+V_2^2\left(z+b(s_*)R_*\right)\Psi_1+2V_1V_2\left(z+b(s_*)R_*\right)\Psi_2-b^{-2}(s_*)\Delta_{\mathbb{R}^{N-1}}\Psi_1=0 \\
     \\
    -\partial_{zz}\Psi_2+V_1^2\left(z+b(s_*)R_*\right)\Psi_2+2V_1V_2\left(z+b(s_*)R_*\right)\Psi_1-b^{-2}(s_*)\Delta_{\mathbb{R}^{N-1}}\Psi_2=0
  \end{array}
\]
in $\mathbb{R}^N$ (cf. (\ref{pao13})) such that
\[
\Psi_1(0)\geq m>0 \ \textrm{and}\ \nabla \Psi_1(0)=0.
\]
However, from the last step in the proof of the aforementioned lemma (i.e. applying the main result of \cite{sourdisJDE}), this is absurd since $V_i''>0$, $i=1,2$. Therefore, we have established the validity of (\ref{eqbfrac}).

By virtue of (\ref{eqL}), (\ref{eqpot1}), (\ref{eqpot2}), (\ref{eqcontraNorm}) and the corresponding relation to (\ref{eqtangpdes}) for $r>0$, we have
\begin{equation}\label{eqgiannis1}
-\Delta \psi_1=\mathcal{O}\left(\epsilon^{-2}e^{-cr/\epsilon}\right)+\mathcal{O}(\epsilon^{-1}), \ r\in [0,3d_0/2),\ s\in \Gamma.
\end{equation}
  On the other side,  via (\ref{eqcontraNorm}), Proposition \ref{proexpdec} and Lemma \ref{lemexpdecay}, the first equation  in  (\ref{eqtangpdes}) yields
\begin{equation}\label{eqgiannis2}
-\Delta \psi_1+\left(\epsilon^{-4}U_2^2-f_u(U_1,x)\right)\psi_1=\mathcal{O}\left(\epsilon^{-2}e^{cr/\epsilon}\right)+\mathcal{O}(\epsilon^\infty),
\end{equation}
for $r\in (-3d_0/2,0)$, $s\in \Gamma$. Now, as in Proposition \ref{proexpdec} we get
\begin{equation}\label{eqexpdecaytangs}
\psi_1=\mathcal{O}\left(e^{cr/\epsilon}\right)+\mathcal{O}(\epsilon^\infty), \ r\in (-3d_0/2,0),\ s\in \Gamma.
\end{equation}
In turn, by using the above relation in (\ref{eqgiannis2}) together with (\ref{eqpot1}), we arrive at
\begin{equation}\label{eqgiannis1+}
-\Delta \psi_1=\mathcal{O}\left(\epsilon^{-2}e^{cr/\epsilon}\right)+\mathcal{O}(\epsilon^{\infty}), \ r\in (-3d_0/2,0),\ s\in \Gamma.
\end{equation}

Let
\begin{equation}\label{eqagni1}
\tilde{\Psi}_n(y)=\psi_1(x_n+\epsilon_n x).
\end{equation}
Then, in any fixed large ball $B_K(0)$ we obtain   from (\ref{eqgiannis1}) and (\ref{eqgiannis1+}) that
\[
-\Delta \tilde{\Psi}_n=\mathcal{O}\left(e^{-c\textrm{dist}(x_n,\Gamma)/\epsilon}\right)+\mathcal{O}(\epsilon),\ |\tilde{\Psi}|\leq C, \tilde{\Psi}(y)\leq \tilde{\Psi}(0)\ \textrm{and}\ \tilde{\Psi}(0)\geq m>0.
\]
As in the proof of Lemma \ref{lemBU}, passing to a subsequence if necessary, we have
\begin{equation}\label{eqagni2}
\tilde{\Psi}_n\to \tilde{\Psi}_\infty\ \textrm{in}\ C^1_{loc}(\mathbb{R}^N),
\end{equation}
where $\tilde{\Psi}_\infty \in C^2(\mathbb{R}^N)$ is bounded, $\tilde{\Psi}_\infty(0)\geq m>0$   and
\[
-\Delta \tilde{\Psi}_\infty=0,\   \tilde{\Psi}_\infty(y)\leq \tilde{\Psi}_\infty(0)\ \textrm{in}\ \mathbb{R}^N.
\]
By the strong maximum principle or Liouville's theorem, we deduce that \begin{equation}\label{eqagni3}\tilde{\Psi}_\infty \equiv c>0.\end{equation}

Let $B_\epsilon'(s_n)\subset \mathbb{R}^{N-1}$ and consider $\hat{s}_n,\tilde{s}_n\in \partial B_\epsilon'(s_n)$ be antipodal in the direction $s$.
 An application of the  mean value theorem yields
\[
\left|\varphi_1(r_n,\hat{s}_n)-\varphi_1(r_n,\tilde{s}_n)\right|=\left|\partial_s\varphi_1(r_n,\bar{s}_n)\right|2\epsilon_n\stackrel{(\ref{eqpsi1yo})}{=}\left|\psi_1(r_n,\bar{s}_n)\right|2\epsilon_n,
\]
for some $\bar{s}_n\in B_\epsilon'(0)$ in the line segment connecting $\hat{s}_n$ to $\tilde{s}_n$. On the one hand, recalling Proposition \ref{prouniform}, the lefthand side of the above relation  tends to zero as $n\to \infty$. On the other hand, by (\ref{eqagni1}), (\ref{eqagni2}) and (\ref{eqagni3}),  the righthand side is bounded from below by $c$, provided that $n$ is sufficiently large, where $c$ as in (\ref{eqagni3}).  We have thus reached a contradiction,  completing the proof of the proposition.
 \end{proof}

An important consequence of the above proposition is the following.

\begin{cor}\label{cormix}
We have
\[
\epsilon_n\left|\nabla_\Gamma \varphi_r\right|\to 0,\ \textrm{uniformly on}\ \overline{\Gamma(d)},\ \textrm{as}\ n\to \infty.
\]
\end{cor}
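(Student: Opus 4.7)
The plan is to mirror the strategy of Corollary~\ref{corur} applied to the tangentially differentiated function $\psi := \partial_s \varphi$ (for an arbitrary unit tangential direction $s$ to $\Gamma$, as in the proof of Proposition~\ref{protang1}). By Proposition~\ref{protang1} we already know $\psi \to 0$ uniformly on $\overline{\Gamma(d)}$. Since, in the Fermi coordinates of Subsection~\ref{subsectfermi}, partial derivatives commute and one has $\partial_r(\partial_s \varphi) = \partial_s(\varphi_r)$, the desired assertion will follow at once from the bound $\epsilon_n \|\nabla \psi\|_{L^\infty(\overline{\Gamma(d)})} \to 0$.

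The main step is to show that $\epsilon_n^2 \Delta \psi \to 0$ uniformly on a slightly enlarged neighborhood (say $\Gamma(3d/2)$) so that an interior elliptic rescaling may then be applied. Starting from the tangentially differentiated system (\ref{eqtangpdes}) and its analogue for $r>0$, I would multiply both sides by $\epsilon^2$ and bound each contribution in turn: the potential terms $\epsilon^{-2} U_2^2 \psi_1$ and $\epsilon^{-2} U_1 U_2 \psi_2$ are $\mathcal{O}(1)|\psi|\to 0$ by (\ref{eqpot1})--(\ref{eqpot2}) and Proposition~\ref{protang1}; the source-type terms $\epsilon^{-2}(U_2^2)_s \varphi_1$ and $\epsilon^{-2}(U_1 U_2)_s \varphi_2$ are $\mathcal{O}(e^{-cr/\epsilon})|\varphi|\to 0$ thanks to (\ref{eqtangdiffpot})--(\ref{eqtangdiffpot2}) and Proposition~\ref{prouniform}; the forcing $\epsilon^2 g_s$ is controlled by $\epsilon \cdot \epsilon \|g\|_1 \to 0$ via the weight $w$ in (\ref{eqnorm1}) and the contradicting assumption (\ref{eqcontraNorm}); and the commutator-type terms $\epsilon^2\mathcal{O}(\varphi_r)$ and $\epsilon^2 \mathcal{O}(r)(|\nabla_\Gamma \varphi|+|\nabla_\Gamma^2 \varphi|)$ in (\ref{eqtangpdes}) vanish uniformly by Corollary~\ref{corur}, Proposition~\ref{protang1}, and the explicit $\epsilon^2$ prefactor combined with the uniform bound $\|\varphi\|_0 = 1$.

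Once $\epsilon^2 \Delta \psi \to 0$ uniformly on the enlarged neighborhood, I would apply the same interior elliptic estimates used in Corollary~\ref{corur} (via the rescaling $y \mapsto \psi(x_0+\epsilon y)$ and \cite[Lem.~A.1]{bbh24}) to conclude $\epsilon |\nabla \psi| \to 0$ uniformly on $\overline{\Gamma(d)}$. In particular the normal component $\epsilon\, \partial_r \psi = \epsilon\, \partial_s(\varphi_r)$ tends to zero uniformly, and ranging over all tangential directions $s$ yields the corollary.

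The most delicate point I expect is the $\epsilon^2 \mathcal{O}(r)|\nabla_\Gamma^2 \varphi|$ commutator: the norm $\|\cdot\|_0$ provides only a uniform $L^\infty$ bound of size one for $\partial_\sigma(\nabla_\Gamma\varphi)$, so the smallness must come entirely from the $\epsilon^2$ prefactor rather than from any vanishing of $\nabla_\Gamma^2 \varphi$ itself; analogously the $g_s$ contribution requires using both the prefactor $\epsilon^2$ and the hypothesis $\epsilon \|g\|_1 \to 0$ to gain smallness. Once these routine but careful bookkeeping estimates are verified, the overall structure is exactly that of Corollary~\ref{corur} pushed one tangential derivative further.
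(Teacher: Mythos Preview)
Your overall strategy is the same as the paper's, but there is a genuine gap in your bookkeeping of the potential term. You claim that ``the potential terms $\epsilon^{-2} U_2^2 \psi_1$ and $\epsilon^{-2} U_1 U_2 \psi_2$ are $\mathcal{O}(1)|\psi|$ by (\ref{eqpot1})--(\ref{eqpot2})''. This is not correct on the $\Omega_2$ side of $\Gamma$: relation (\ref{eqpot1}) is stated only in $\Gamma(2|\ln\epsilon|\epsilon)\cap\Omega_1$, and in the corresponding relation for $\Omega_2$ one has $U_2^2=\epsilon^2 b^2 V_2^2(z)+\cdots$ with $V_2(z)\sim A|z|$ as $z\to-\infty$. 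Hence $\epsilon^{-2}U_2^2\sim (r/\epsilon)^2$ there, which is unbounded on $\Gamma(d)$. The smallness of $\psi$ alone (Proposition~\ref{protang1}) cannot absorb this growth, so your derivation of ``$\epsilon^2\Delta\psi\to 0$ uniformly'' breaks down precisely in the region $r<0$, $|r|\gg\epsilon$.

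The paper fixes this by recycling the equations (\ref{eqgiannis1}) and (\ref{eqgiannis1+}) already derived inside the proof of Proposition~\ref{protang1}: the latter was obtained by first establishing the exponential decay (\ref{eqexpdecaytangs}) of $\psi_1$ in $\Omega_2$ and feeding it back into (\ref{eqgiannis2}) so that $\epsilon^{-4}U_2^2\psi_1=\mathcal{O}(\epsilon^{-2}e^{cr/\epsilon})$. Together, (\ref{eqgiannis1}) and (\ref{eqgiannis1+}) give only $\epsilon^2|\Delta\psi_1|\leq C e^{-c|r|/\epsilon}+C\epsilon=\mathcal{O}(1)$ (not $o(1)$), and the conclusion $\epsilon|\nabla\psi|\to 0$ then follows from the interpolation form of the interior estimate (optimize the radius $R$ in $|\nabla\psi|\leq C(R^{-1}\|\psi\|+R\|\Delta\psi\|)$, using $\|\psi\|\to 0$). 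Your argument can be repaired in the same way: invoke (\ref{eqexpdecaytangs}) to control the potential in $\Omega_2$, accept $\epsilon^2\Delta\psi=\mathcal{O}(1)$ rather than $o(1)$, and let the smallness come from $\psi\to 0$ via interpolation. Alternatively, note that the strengthened decay $\psi_1=o(1)e^{cr/\epsilon}$ of Remark~\ref{remexpdecSSS} (which does not rely on this corollary) would indeed yield your stronger claim $\epsilon^2\Delta\psi\to 0$.
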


\begin{proof}
This follows readily from Proposition \ref{protang1}, (\ref{eqpsi1yo}), (\ref{eqgiannis1}), (\ref{eqgiannis1+}) and standard elliptic estimates.
\end{proof}

\begin{rem}\label{remexpdecSSS}
We note that thanks to (\ref{eqcontraNorm}), Proposition \ref{prouniform}, Remark \ref{remexpdec}, Lemma \ref{lemexpdecay}, Proposition \ref{protang1} and (\ref{eqtangpdes}), relation (\ref{eqgiannis2}) can be strengthened to
\[
-\Delta \psi_1+\left(\epsilon^{-4}U_2^2-f_u(U_1,x)\right)\psi_1=o\left(\epsilon^{-2}e^{cr/\epsilon}\right)+\mathcal{O}(\epsilon^\infty),
\]
for $r\in (-3d_0/2,0)$ and $s\in \Gamma$. Thus, as in Proposition \ref{proexpdec} (cf. Remark \ref{remexpdec}), making once more use of the assertion of Proposition \ref{protang1}, we deduce that
\[
\psi_1=o(1)e^{cr/\epsilon}+\mathcal{O}(\epsilon^\infty),\ r\in (-3d_0/2,0), \ s\in \Gamma.
\]
An analogous estimate holds for $\psi_2$ if $r\in (0,3d_0/2)$, $s\in \Gamma$.
\end{rem}

The remainder of this subsection will be devoted to the proof of  the following proposition.

\begin{pro}\label{protang2}
For any directions $s,\sigma$ on $\Gamma$, we have
\[
\partial^2_{s\sigma}\varphi\to 0,\ \textrm{uniformly on}\ \overline{\Gamma(d)}, \ \textrm{as}\ n\to \infty.
\]
\end{pro}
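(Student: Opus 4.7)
The plan is to follow the strategy of Proposition \ref{protang1} one order higher, applied now to $\chi:=\partial^2_{s\sigma}\varphi$. First I would differentiate \eqref{eqinhomog} once with respect to $s$ and once with respect to $\sigma$. Using the smoothness assumptions from Subsection \ref{subsecAnsatz} (which allow \eqref{eqtangdiffpot}--\eqref{eqtangdiffpot2} to be differentiated one more time tangentially) together with the local coordinate expressions \eqref{eqLaplaceFermi1}--\eqref{eqLaplaceFermi3}, one obtains an equation for $\chi$ of the schematic form $\mathcal{L}(\chi)=g_{s\sigma}+R_n$, where $R_n$ collects lower-order remainders involving $\varphi$, $\partial_s\varphi$, $\partial_\sigma\varphi$, $\varphi_r$, $\nabla_\Gamma\varphi$, $\partial_r\nabla_\Gamma\varphi$ and $\nabla_\Gamma^2\varphi$, each multiplied by coefficients that are either bounded or of order $\epsilon^{-4}(r^2+\epsilon^2)$ on the diagonal and $\epsilon^{-4}\epsilon^2 e^{-c|r|/\epsilon}+\mathcal{O}(\epsilon^\infty)$ off-diagonally, exactly as in the derivation of \eqref{eqtangpdes}.

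Next, I would argue by contradiction. Corollary \ref{corC3} forces any would-be counterexample to concentrate near $\Gamma$, so we may assume without loss of generality that $\chi_{1}(x_n)=\|\chi_{1}\|_{L^\infty(\Omega)}\geq m$ for some $m>0$ at a point $x_n=(r_n,s_n)$ with $r_n\to 0$. The crux is to show that $|r_n|/\epsilon_n\to\infty$. If this fails, after passing to a subsequence with $r_n/\epsilon_n\to R_*\in[0,\infty)$ and $s_n\to s_*\in\Gamma$, I would repeat the blow-up of Lemma \ref{lemBU} centered at $x_n$. Using $|\chi_n|\leq 1$, the smallness $\epsilon_n\|g\|_1\to 0$, and the already established uniform vanishings $\varphi_n\to 0$ (Proposition \ref{prouniform}), $\partial_s\varphi_n\to 0$ (Proposition \ref{protang1}), $\epsilon_n\varphi_{n,r}\to 0$ and $\epsilon_n\nabla_\Gamma\varphi_{n,r}\to 0$ (Corollaries \ref{corur} and \ref{cormix}), the rescaled remainder $\epsilon_n^2 R_n$ tends to zero locally uniformly. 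Hence the rescaled $\chi_{1,n}$ converge in $C^{1,\alpha}_{\mathrm{loc}}(\mathbb{R}^N)$ to a bounded classical solution $\tilde\chi_\infty$ of the appropriately shifted version of \eqref{pao13} with $\tilde\chi_\infty(0)\geq m$ and $\nabla\tilde\chi_\infty(0)=0$ (the latter because $x_n$ is an interior maximum). By \cite[Thm. 1]{sourdisJDE}, $\tilde\chi_\infty$ must be a scalar multiple of $V_1'(\cdot+b(s_*)R_*)$, and $V_1''>0$ then forces this multiple to vanish, contradicting $\tilde\chi_\infty(0)\geq m>0$.

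Once $|r_n|/\epsilon_n\to\infty$ is established, I would conclude exactly as in the final step of Proposition \ref{protang1}. Restricting to $x_n\in\Omega_1$ (the other case is symmetric), the off-diagonal potential $\epsilon^{-4}U_2^2$ is super-exponentially small at scale $\epsilon_n$ around $x_n$, so the differentiated equation together with the previously established vanishings, Lemma \ref{lemexpdecay} and Remark \ref{remexpdecSSS} (to handle the $\Omega_2$-side) yields bounds of the type $-\Delta\chi_1=\mathcal{O}(\epsilon_n^{-2}e^{-c|r|/\epsilon_n})+o(\epsilon_n^{-1})$ on both sides of $\Gamma$. Rescaling $\tilde X_n(y)=\chi_{1}(x_n+\epsilon_n y)$ then gives $-\Delta\tilde X_n\to 0$ locally uniformly, while $|\tilde X_n|\leq 1$ and $\tilde X_n$ attains its maximum $\geq m$ at the origin. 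Liouville's theorem identifies the limit $\tilde X_\infty$ with a positive constant $c\geq m$. Applying the mean value theorem to $\partial_s\varphi_1$ in the direction $\sigma$ between two antipodal points of $\partial B'_{\epsilon_n}(s_n)$ then produces, in the same fashion as at the end of the proof of Proposition \ref{protang1}, a contradiction between the uniform vanishing $\partial_s\varphi_1\to 0$ (Proposition \ref{protang1}) and the lower bound on the difference quotient coming from $\tilde X_\infty\equiv c>0$.

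The main technical obstacle is the bookkeeping in the first step: after two tangential differentiations, the coefficient of $\varphi$ in $R_n$ is of size $\epsilon^{-4}(r^2+\epsilon^2)=O(\epsilon^{-2})$ in the inner region, and on its own it would survive the $\epsilon_n^2$ rescaling in the blow-up. It is precisely the uniform decay $\varphi_n\to 0$ (Proposition \ref{prouniform})---and analogously $\partial_s\varphi_n\to 0$ for the $O(\epsilon^{-1})$ coefficients of $\partial_s\varphi$ that arise from single differentiations---which turns these \emph{a priori} $O(1)$ contributions after rescaling into genuine $o(1)$ terms, so that the limit problem is the one governed by \eqref{pao13} and the classification theorem of \cite{sourdisJDE} can be applied.
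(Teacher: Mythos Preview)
Your overall strategy mirrors the paper's, but there is a genuine gap in the bookkeeping of the first step that breaks the argument as written. When you differentiate \eqref{eqtangpdes} once more in a tangential direction $\sigma$, the remainder does \emph{not} only involve derivatives of $\varphi$ up to second tangential order. The term $\mathcal{O}(r)\bigl(|\nabla_\Gamma\varphi|+|\nabla_\Gamma^2\varphi|\bigr)$ in \eqref{eqtangpdes}, when hit by $\partial_\sigma$, produces (among other things) $\mathcal{O}(r)|\nabla_\Gamma^2\varphi_\sigma|$ and $\mathcal{O}(r)|\nabla_\Gamma^2\psi|$ with $\psi=\varphi_s$; see the second and fourth lines of the paper's \eqref{eqtangpdessss}. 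These are \emph{third}-order tangential derivatives of $\varphi$ and are not controlled by $\|\varphi\|_0=1$, which only bounds $\nabla_\Gamma^2\varphi$. Consequently, in your blow-up around $x_n$ the rescaled contribution $\epsilon_n^2\cdot\mathcal{O}(r)\,|\nabla_\Gamma^3\varphi|$ is \emph{not} known to be $o(1)$, so you cannot pass to the clean limit problem \eqref{pao13} and invoke the classification of \cite{sourdisJDE}. The same obstruction reappears in your Liouville step: the claimed bound $-\Delta\chi_1=\mathcal{O}(\epsilon_n^{-2}e^{-c|r|/\epsilon_n})+o(\epsilon_n^{-1})$ requires an a priori estimate on $|\nabla_\Gamma^3\varphi|$ that you have not established.

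The paper closes this gap by a short bootstrap before running the analogue of Proposition~\ref{protang1}. From \eqref{eqtangpdessss} and the exponential decay on the ``wrong'' side one writes $-\Delta\rho=\mathcal{O}(r)\mathcal{O}(|\nabla_\Gamma^3\varphi|)+\mathcal{O}(\epsilon^{-2})e^{-c|r|/\epsilon}+\mathcal{O}(\epsilon^{-1})$ in $\Gamma(3d_0/2)$; interior elliptic estimates then give
\[
\|\nabla\rho\|_{L^\infty(\Gamma(d_0))}^2\leq \delta\,\|\nabla_\Gamma^3\varphi\|_{L^\infty(\Gamma(3d_0/2))}^2+\delta^{-1}\mathcal{O}(\epsilon^{-4}),
\]
and summing over all pairs $(s,\sigma)$ (so that the left side dominates $\|\nabla_\Gamma^3\varphi\|^2$), using Corollary~\ref{corC3} to replace $\Gamma(3d_0/2)$ by $\Gamma(d_0)$, and choosing $\delta$ small yields $\|\nabla_\Gamma^3\varphi\|_{L^\infty(\Gamma(d_0))}\leq C\epsilon^{-2}$. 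With this bound in hand the troublesome term becomes $\mathcal{O}(r)\epsilon^{-2}$, which after rescaling contributes $\mathcal{O}(r_n)+\mathcal{O}(\epsilon_n)\to 0$, and only then does the argument of Proposition~\ref{protang1} go through. You need to insert this bootstrap step; once you do, the rest of your outline matches the paper's proof.
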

\begin{proof}
Let
\begin{equation}\label{eqrhodef}
\rho=\partial^2_{s\sigma}\varphi \stackrel{(\ref{eqpsi1yo})}{=}\partial_\sigma \psi.
\end{equation}
By differentiating (\ref{eqtangpdes}) with respect to $\sigma$, keeping in mind (\ref{eqcontraNorm}), (\ref{eqtangdiffpot}) and (\ref{eqtangdiffpot2}), we get
\begin{equation}\label{eqtangpdessss}\begin{split}
                                     \mathcal{L}(\rho)-g_{s\sigma} = \mathcal{O}(\varphi_\sigma)+\mathcal{O}(\varphi)+\mathcal{O}(\psi)+\mathcal{O}(\epsilon^\infty)    \ \ \ \ \ \ \ \ \ \ \ \ \ \ \ \ \ \ \ \ \ \ \ \ \ \ \ \ \ \ \ \ \ \ \ \ \ \ \ \ \ \ \ \ \ \ \ \ \ \ \ \ \ \ \ \ \ \ \  \\
                                        + \mathcal{O}(\varphi_{r\sigma})+\mathcal{O}(r)\left(|\nabla_\Gamma \varphi_\sigma|+|\nabla_\Gamma^2 \varphi_\sigma| \right)+\epsilon^{-4}\left(\begin{array}{cc}
                                                                                                                                                 \mathcal{O}\left(r^2+\epsilon^2\right) & \mathcal{O}\left(\epsilon^2e^{cr/\epsilon}\right) \\
                                                                                                                                                  \mathcal{O}\left(\epsilon^2e^{cr/\epsilon}\right)& \mathcal{O}\left(\epsilon^2e^{cr/\epsilon}\right)
                                                                                                                                               \end{array}
 \right)\varphi_\sigma \ \ \ \ \ \ \ \ \ \ \ \ \ \\
 +\mathcal{O}(\varphi_r)+\mathcal{O}(r)\left(|\nabla_\Gamma \varphi|+|\nabla_\Gamma^2 \varphi| \right)+\epsilon^{-4}\left(\begin{array}{cc}
                                                                                                                                                 \mathcal{O}\left(r^2+\epsilon^2\right) & \mathcal{O}\left(\epsilon^2e^{cr/\epsilon}\right) \\
                                                                                                                                                  \mathcal{O}\left(\epsilon^2e^{cr/\epsilon}\right)& \mathcal{O}\left(\epsilon^2e^{cr/\epsilon}\right)
                                                                                                                                               \end{array}
 \right)\varphi \ \ \ \ \ \ \ \ \ \ \ \ \ \\
 +\mathcal{O}(\psi_r)+\mathcal{O}(r)\left(|\nabla_\Gamma \psi|+|\nabla_\Gamma^2 \psi| \right)+\epsilon^{-4}\left(\begin{array}{cc}
                                                                                                                                                 \mathcal{O}\left(r^2+\epsilon^2\right) & \mathcal{O}\left(\epsilon^2e^{cr/\epsilon}\right) \\
                                                                                                                                                  \mathcal{O}\left(\epsilon^2e^{cr/\epsilon}\right)& \mathcal{O}\left(\epsilon^2e^{cr/\epsilon}\right)
                                                                                                                                               \end{array}
 \right)\psi, \ \ \ \ \ \ \ \ \ \ \ \ \
                       \end{split}
\end{equation} if $r\in (-2d_0,0]$, $s\in \Gamma$. An analogous relation holds for $r\in (0,2d_0)$.

If $r\in (-3d_0/2,0)$, the righthand side of the first equation   of the above relation, modulo $\mathcal{O}(\epsilon^\infty)$, can be bounded by
\begin{equation}\label{eqrhs}
o(\epsilon^{-2})e^{cr/\epsilon}+Cr\left|\nabla^3_{\Gamma}\varphi \right|\leq o(\epsilon^{-2})e^{cr/\epsilon} +ro(\epsilon^{-3})e^{cr/\epsilon}\leq o(\epsilon^{-2})e^{cr/\epsilon},
\end{equation}
as is easy to check (keep in mind (\ref{eqcontraNorm}), Remark \ref{remexpdec}, Lemma \ref{lemexpdecay} and Remark \ref{remexpdecSSS}). In turn, via   (\ref{eqpot2}) and (\ref{eqcontraNorm}), the first equation of (\ref{eqtangpdessss}) gives
\[
-\Delta \rho_1+\left(\epsilon^{-4}U_2^2-f_u(U_1,x)\right) \rho_1= \mathcal{O}(\epsilon^{-2})e^{cr/\epsilon}+\mathcal{O}(\epsilon^\infty),\ r\in (-3d_0/2,0),\ s\in \Gamma,
\]
(cf. (\ref{eqgiannis2})). Then, similarly to (\ref{eqexpdecaytangs}), we infer that
\begin{equation}\label{eqexpdecaytangsssssssssg}
\rho_1=\mathcal{O}\left(e^{cr/\epsilon}\right)+\mathcal{O}(\epsilon^\infty), \ r\in (-3d_0/2,0),\ s\in \Gamma.
\end{equation}
An analogous estimate holds for $\rho_2$ if  $r\in(0,3d_0/2)$.

Armed with the above information, we can write (\ref{eqtangpdessss}) and the corresponding equation for $r>0$ as
\begin{equation}\label{eqfinal}
-\Delta \rho=\mathcal{O}(r)\mathcal{O}\left(\left|\nabla^3_\Gamma \varphi\right| \right)+\mathcal{O}(\epsilon^{-2})e^{-c|r|/\epsilon}+\mathcal{O}(\epsilon^{-1})\ \textrm{in}\ \Gamma(3d_0/2).
\end{equation}
Hence, by standard interior elliptic estimates (see for instance \cite[Lem. A.1]{bbh24}), using once more that $|\rho|\leq C$, we deduce that
\[\begin{array}{rcl}
    \|\nabla \rho\|_{L^\infty\left(\Gamma(d_0)\right)}^2 & \leq & \delta\|\nabla^3_\Gamma \varphi\|_{L^\infty\left(\Gamma(3d_0/2) \right)}^2 +\delta^{-1}\mathcal{O}(\epsilon^{-4}) \\
     &  &  \\
    & \stackrel{\textrm{Cor.} \ref{corC3}}{\leq} & \delta\|\nabla^3_\Gamma \varphi\|_{L^\infty\left(\Gamma(d_0) \right)}^2 +\delta^{-1}\mathcal{O}(\epsilon^{-4}),
  \end{array}
 \]
 for $\delta\in (0,1)$ independent of $n$. Applying the above estimate for all directions $s,\sigma$ on $\Gamma$ (recall (\ref{eqrhodef})), summing them,  using that
 $\left|\nabla_\Gamma \cdot \right|\leq C\left|\nabla \cdot\right|$, and then choosing a sufficiently small $\delta$, we conclude that
\[
\|\nabla^3_\Gamma \varphi\|_{L^\infty\left(\Gamma(d_0) \right)}\leq C\epsilon^{-2}.
\]

It follows from (\ref{eqtangpdessss}) and the corresponding relation for $r>0$, via (\ref{eqcontraNorm}), (\ref{eqrhs}) together with the justification following it,  and the above estimate, that
\[
\mathcal{L}(\rho)=\mathcal{O}(r)\epsilon^{-2}+o(\epsilon^{-2})\ \textrm{in}\ \Gamma(d_0).
\]
Moreover, (\ref{eqfinal}) becomes
\[
-\Delta \rho=\mathcal{O}(r)\epsilon^{-2}+\mathcal{O}(\epsilon^{-2})e^{-c|r|/\epsilon}+\mathcal{O}(\epsilon^{-1})\ \textrm{in}\ \Gamma(d_0),
\]
(cf. (\ref{eqgiannis1})).

Thanks to the above two relations,  the rest of the proof becomes a straightforward adaptation of that of Proposition \ref{protang1} and  is therefore omitted.
We just remark that the term $\mathcal{O}(r)\epsilon^{-2}$ does not affect the analysis in the proof of the aforementioned proposition since after rescaling (recall that both sides are multiplied   by $\epsilon^2$ in the process) it becomes
$\mathcal{O}(r_n)+\mathcal{O}(\epsilon_n)\to 0$ uniformly over compacts (where $(r_n,s_n)$ the corresponding point to  (\ref{eqrn})).
\end{proof}

\subsection{Completion of the proof of Theorem \ref{thmMain}}\label{subsecfinal}
By virtue of (\ref{eqC2mideniki}), Propositions \ref{prouniform}, \ref{protang1}, \ref{protang2} and Corollaries \ref{corur}, \ref{cormix}, we get
$\|\varphi_n\|_0\to 0$ which contradicts (\ref{eqcontraNorm}).

\appendix  \section{Useful a-priori estimates for a Schr\"{o}dinger operator}\label{ApendixA}
In this appendix, we will prove the following result which is needed in the proof of Proposition \ref{proexpdec} as explained in Remark \ref{remGenScrod}.
\begin{lem}\label{lemA1}
There exist positive constants $\epsilon_0$, $C$ such that if $\phi\in C^2(\mathcal{S})\cap C(\bar{\mathcal{S}})$, $g\in C(\mathcal{S})$ satisfy
\begin{equation}\label{eqappenSchrod1}
-\epsilon^4 \Delta \phi+w_1^2\phi=g\ \textrm{in}\ \mathcal{S};\ \phi=0\ \textrm{on}\ \partial\mathcal{S},
\end{equation}
with $\epsilon \in (0,\epsilon_0)$ and $w_1$, $\mathcal{S}$ as in the proof of Proposition \ref{proexpdec}, then
\[
\|\phi\|_{L^\infty(\mathcal{S})}\leq C\epsilon^{-2}\|g\|_{L^\infty(\mathcal{S})}.
\]

Moreover, under the same regularity conditions on $\phi$, $g$,  assuming that
\begin{equation}\label{eqappenSchrod2}
-\epsilon^4 \Delta \phi+w_1^2\phi=\textrm{dist}^2(x,\partial \mathcal{S}\cap \partial \Omega)g\ \textrm{in}\ \mathcal{S};\ \phi=0\ \textrm{on}\ \partial\mathcal{S},
\end{equation}
 then
\[
\|\phi\|_{L^\infty(\mathcal{S})}\leq C\|g\|_{L^\infty(\mathcal{S})}.
\]
\end{lem}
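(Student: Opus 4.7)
The plan is to apply the comparison principle to the operator $L := -\epsilon^4 \Delta + w_1^2$, which satisfies a maximum principle since its zeroth-order coefficient is non-negative. The essential structural input is the Hopf lower bound (\ref{eqhopf24}): choosing $d_1$ small enough that $\mathcal{S}$ stays a fixed distance from $\Gamma$, one has $w_1(x) \geq c\, t(x)$ on $\mathcal{S}$, where $t(x) := \mathrm{dist}(x, \partial \mathcal{S} \cap \partial \Omega)$ is a smooth function on $\mathcal{S}$ (with $|\nabla t|=1$ and $\Delta t$ uniformly bounded) provided $d_1$ is taken less than the reach of $\partial \Omega$.

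For the second estimate, the simplest possible barrier already suffices: I would set $\Psi_0 := (c^{-2}+1)\|g\|_\infty$. Since $L\Psi_0 = w_1^2 \Psi_0 \geq c^2 t^2 \Psi_0 \geq t^2 \|g\|_\infty \geq |t^2 g|$ and $\Psi_0 \geq 0$ on $\partial \mathcal{S}$, comparison with $\pm \phi$ immediately yields $|\phi| \leq C \|g\|_\infty$.

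For the first estimate the barrier must accommodate the natural boundary layer of width $\epsilon$, where the Laplacian and the degenerate potential balance. I would take $\Psi(x) := M \epsilon^{-2} \|g\|_\infty\, \Theta(t(x)/\epsilon)$ with $\Theta \colon [0,\infty) \to [0,\infty)$ bounded and to be chosen; a direct calculation using $|\nabla t|=1$ yields
\[
L\Psi \geq M\|g\|_\infty\bigl[ -\Theta''(s) + c^2 s^2 \Theta(s) - \epsilon\,\Theta'(s)\Delta t\bigr], \qquad s = t/\epsilon.
\]
A workable choice is $\Theta(s) = A(1 - e^{-\gamma s})$ with $A$ large and $\gamma$ small, both fixed independently of $\epsilon$. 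The leading functional $A\gamma^2 e^{-\gamma s} + A c^2 s^2 (1 - e^{-\gamma s})$ is bounded below by a positive constant uniformly on $[0,\infty)$, so the $O(\epsilon)$ cross-term is absorbed for small $\epsilon$. The match $\Theta(0)=0$ means $\Psi$ vanishes on $\partial \mathcal{S} \cap \partial\Omega$, while $\Psi > 0$ on the interior part $\partial \mathcal{S} \cap \Omega$; comparison with $\pm \phi$ then gives $|\phi| \leq \Psi \leq C \epsilon^{-2}\|g\|_\infty$.

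The main technical points to verify are (i) the smoothness of $t$ on all of $\mathcal{S}$, which is ensured by taking $d_1$ smaller than the reach of $\partial\Omega$, and (ii) the uniform positivity of the ODE functional $f(s) := \gamma^2 e^{-\gamma s} + c^2 s^2 (1 - e^{-\gamma s})$ on $[0,\infty)$, which is elementary since $f(0)=\gamma^2 > 0$, $f(s)\to \infty$ as $s\to\infty$, and $f$ is a sum of non-negative terms that cannot vanish simultaneously. No deep obstacle is expected: both parts boil down to the model ODE $-\Theta'' + s^2 \Theta = \mathrm{const}$ whose scaling captures exactly the boundary behavior forced by Hopf.
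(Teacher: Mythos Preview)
Your proof is correct and takes a genuinely different route from the paper. The paper argues by contradiction: normalizing $\|\phi_n\|_\infty=1$ and $\epsilon_n^{-2}\|g_n\|_\infty\to 0$, it evaluates the equation at a maximum point $x_n$ to force $w_1(x_n)=o(\epsilon_n)$, hence by Hopf $\mathrm{dist}(x_n,\partial\Omega)=o(\epsilon_n)$; then it rescales by $\epsilon_n$ near $\partial\Omega$, invokes boundary $W^{2,p}$ estimates to obtain $|\nabla\phi_n|\le C\epsilon_n^{-1}$, and derives a contradiction with the Dirichlet condition via the mean value theorem. For the second assertion the paper's argument collapses to the same maximum-point evaluation plus Hopf, which is essentially your constant barrier in disguise.

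Your approach is more direct and elementary: a single explicit supersolution, no contradiction, no rescaling, and no elliptic regularity theory. The boundary-layer barrier $\Theta(t/\epsilon)$ encodes in one stroke what the paper extracts through blow-up. The price is that you must exhibit the barrier and verify the ODE lower bound, but as you note this is routine once the Hopf inequality $w_1\ge ct$ is in hand. The paper's method is more flexible in that it does not require guessing a profile, but for this particular statement your construction is cleaner.
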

\begin{proof}We start by proving the first assertion of the lemma. To this end, we will argue by contradiction. So, let us suppose that there exist sequences
$\epsilon_n \to 0$, $\phi_n\in C^2(\mathcal{S})\cap C(\bar{\mathcal{S}})$, $g_n\in C(\mathcal{S})$ satisfying (\ref{eqappenSchrod1}) such that
\begin{equation}\label{eqappencontranorms}
\|\phi_n\|_{L^\infty(\mathcal{S})}=1\ \textrm{and}\ \epsilon_n^{-2}\|g_n\|_{L^\infty(\mathcal{S})}\to 0.
\end{equation}

Without loss of generality, we may assume that
\[
\phi_n(x_n)=\|\phi_n\|_{L^\infty(\mathcal{S})}=1\ \textrm{for some}\ x_n\in \mathcal{S},\ \textrm{and therefore}\ \Delta \phi_n(x_n)\leq 0.
\]
Now, substituting $x=x_n$ in (\ref{eqappenSchrod1}) gives
\[
w_1^2(x_n)\leq g_n(x_n),
\
\textrm{and via (\ref{eqappencontranorms}) that}\
\epsilon_n^{-2} w_1^2(x_n)\to 0.
\]
Thus, recalling (\ref{eqhopf24}), we have
\begin{equation}\label{eqappenloc}
\epsilon_n^{-1} \textrm{dist}(x_n, \partial \mathcal{S}\cap \partial \Omega)\to 0.
\end{equation}

 It follows from (\ref{eqappenSchrod1}), (\ref{eqappencontranorms}) and the fact that $w_1$ vanishes on $\partial \Omega\cap \partial\mathcal{S}$ that
 \[
 \Delta \phi_n=\mathcal{O}(\epsilon_n^{-2})\ \textrm{if}\ \textrm{dist}(x,\partial \mathcal{S}\cap \partial \Omega)<C\epsilon_n.
 \]
 Stretching variables, $x\mapsto (x-p)/\epsilon_n$ with $p\in \partial \mathcal{S}\cap \partial \Omega$, in the above relation; applying standard boundary elliptic estimates in the resulting one (using once more (\ref{eqappencontranorms})), and then scaling these back yields
 \[
 |\nabla \phi_n|\leq C\epsilon_n^{-1}\ \textrm{in}\ B_p(C\epsilon_n)\cap \mathcal{S},
 \]
 where $B_p(r)$ denotes the ball of radius $r$ centered at $p$.
 In fact, the generic constant $C$ in the above estimate can be taken to be independent of $p$ as well (due to the smoothness and compactness of $\partial \mathcal{S}\cap \partial \Omega$). Thus, we obtain
 \[
 |\nabla \phi_n|\leq C\epsilon_n^{-1}\ \textrm{if}\ \textrm{dist}(x,\partial \mathcal{S}\cap \partial \Omega)\leq C\epsilon_n.
 \]

However, the above relation is in contradiction with   (\ref{eqappencontranorms}), (\ref{eqappenloc}) and the fact that $\phi_n$ vanishes on  $\partial \mathcal{S}\cap \partial \Omega$ (just apply the mean value theorem between $x_n$ and its closest point on $\partial \mathcal{S}\cap \partial \Omega$).
 The proof of the first assertion of the lemma is  complete.

The proof of the second assertion proceeds along the same lines but is   more direct. It turns out that now there is no need for rescaling and employing elliptic boundary estimates.
Indeed, using the previous notation, we can conclude at once by setting $x=x_n$ in (\ref{eqappenSchrod2}) and using (\ref{eqhopf24}).
\end{proof}

\end{document}